\newcommand{\bol}[1]{\mbox{\boldmath$#1$}}
\newcommand{\bSigma}{\boldsymbol{\Sigma}}
\newcommand{\bPsi}{\boldsymbol{\Psi}}
\newcommand{\bXi}{\boldsymbol{\Xi}}
\newcommand{\bOmega}{\boldsymbol{\Omega}}
\newcommand{\bomega}{\boldsymbol{\omega}}
\newcommand{\bu}{\mathbf{u}}
\newcommand{\bff}{\mathbf{f}}
\newcommand{\bY}{\mathbf{Y}}
\newcommand{\bS}{\mathbf{S}}
\newcommand{\bQ}{\mathbf{Q}}
\newcommand{\bd}{\mathbf{d}}
\newcommand{\bX}{\mathbf{X}}
\newcommand{\bA}{\mathbf{A}}
\newcommand{\bB}{\mathbf{B}}
\newcommand{\ba}{\mathbf{a}}
\newcommand{\bb}{\mathbf{b}}
\newcommand{\bL}{\mathbf{L}}
\newcommand{\bC}{\mathbf{C}}
\newcommand{\bU}{\mathbf{U}}
\newcommand{\bR}{\mathbf{R}}
\newcommand{\bv}{\mathbf{v}}
\newcommand{\bzero}{\mathbf{0}}
\newcommand{\bI}{\mathbf{I}}
\newcommand{\bF}{\mathbf{F}}
\newcommand{\bD}{\mathbf{D}}
\newcommand{\bV}{\mathbf{V}}
\newtheorem{theorem}{Theorem}
\newtheorem{lemma}{Lemma}
\newfont{\tabfont}{cmr7 at 7pt}
\begin{document}

\begin{center}
\vspace*{0.7cm} \noindent {\bf \large Exact and Asymptotic Tests on a Factor Model in Low and Large Dimensions with Applications}\\
\vspace{0.5cm} \noindent {\sc Taras Bodnar$^{a}$\footnote{Corresponding Author: Taras Bodnar. E-Mail: taras.bodnar@math.su.se. Tel: +46 8 164562. Fax: +46 8 612 6717. This research was partly supported by the Deutsche Forschungsgemeinschaft via the Research Unit FOR 1735 ''Structural Inference in Statistics: Adaptation and Efficiency''. The first author appreciates the financial support of SIDA via the project 1683030302.} and Markus Rei{\upshape{\ss}}$^{b}$
}\\
\vspace{0.2cm}
{\it \footnotesize  $^a$ Department of Mathematics, Stockholm University, Roslagsv\"{a}gen 101, SE-10691 Stockholm, Sweden}\\
{\it \footnotesize  $^a$ Department of Mathematics, Humboldt-University of Berlin, Unter den Linden 6, D-10099 Berlin, Germany}\\
\end{center}

\vspace{0.2cm}
\begin{abstract}
In the paper, we suggest three tests on the validity of a factor model which can be applied for both, small-dimensional and large-dimensional data. The exact and asymptotic distributions of the resulting test statistics are derived under classical and high-dimensional asymptotic regimes. It is shown that the critical values of the proposed tests can be calibrated empirically by generating a sample from the inverse Wishart distribution with identity parameter matrix. The powers of the suggested tests are investigated by means of simulations. The results of the simulation study are consistent with the theoretical findings and provide general recommendations about the application of each of the three tests. Finally, the theoretical results are applied to two real data sets, which consist of returns on stocks from the DAX index and on stocks from the S\&P 500 index. Our empirical results do not support the hypothesis that all linear dependencies between the returns can be entirely captured by the factors considered in the paper.
\end{abstract}

\vspace{0.1cm}
\noindent AMS 2010 subject classifications: 91G70, 62H25, 62H15, 62H10, 62E15, 62E20, 60B20 \\
\noindent JEL Classification: C12, C38, C52, C55, C58, C65 G12\\
\noindent {\it Keywords}: factor model, exact test, asymptotic test, high-dimensional test, high-dimensional asymptotics, precision matrix, inverse Wishart distribution, random matrix theory.

\section{Introduction}

Factor models are widely spread in different fields of science, especially, in economics and finance where this type of models have been increasing in popularity recently. They are often used in forecasting mean and variance (see, e.g., \citet{StockWatson2002a,StockWatson2002b}, \citet{marcellino2003macroeconomic}, \citet{artis2005factor}, \citet{boivin2005understanding}, \citet{anderson2007forecasting} and the references therein), in macroeconomic analysis (see, \citet{bernanke2003monetary}, \citet{favero2005principal}, \citet{giannone2006vars}), in portfolio theory (see, \citet{ross1976arbitrage,ross1977capital}, \citet{engle1981one}, \citet{chamberlain1983funds}, \citet{chamberlain1983arbitrage}, \citet{diebold1989dynamics}, \citet{fama1992cross,fama1993common}, \citet{aguilar2000bayesian}, \citet{Bai2003inferential}, \citet{ledoit2003improved}). Factor models are also popular in physics, psychology, biology (e.g., \citet{rubin1982algorithms}, \citet{carvalho2008high}) as well as in multiple testing theory (e.g., \citet{friguet2009factor}, \citet{dickhaus2012simultaneous}, \citet{fan2012estimating}).

Another stream of research related to factor models deals with the estimation of high-dimensional covariance and precision matrices. This approach is motivated by a rapid development of high-dimensional factor models during the last years (\citet{BaiNg2002determining,BaiNg2008foundations}, \citet{bai2012statistical}, \citet{Bai2013fixed}). \citet{fan2008high}, \citet{fan2012vast}, \citet{fan2013large} among others have suggested several methods for estimating the covariance and precision matrices based on factor models in high dimensions and applied their results to portfolio theory, whereas \citet{ledoit2003improved} have proposed to combine the sample covariance matrix with the single-factor model based estimator in order to improve the estimate of the covariance matrix. Here, they use the capital asset pricing model (CAPM) as a single-factor model. \citet{ross1976arbitrage,ross1977capital} argues that the empirical success of the CAPM can be explained by the validity of the following three assumptions: i) there are many assets; ii) the market permits no arbitrage opportunity; iii) asset returns have a factor structure with a small number of factors. He also presents a heuristic argument that if an infinite number of assets is present on the market, then it is possible to construct sufficiently many riskless portfolios. In \citet{chamberlain1983funds}, conditions are derived under which this heuristic argument of Ross is justified. Furthermore, \citet{chamberlain1983arbitrage} suggest the so-called approximate $K$-factor structure model where the number of assets is assumed to be infinite, while \citet{fan2008high} and \citet{LiLiShi2013} extend this model by considering an asymptotically infinite number of known and unknown factors, respectively.

Let $X_{it}$ be the observation data for the $i$-th cross-section unit at time $t$. For instance, in the case of portfolio theory, $X_{it}$ represents the return of the $i$-th asset at time $t$. Let $\bX_t=(X_{1t},\ldots,X_{pt})^\top$ be the observation vector at time $t$ and let $\bff_t$ be a $K$-dimensional vector of common observable factors at time $t$. Then the factor model in vector form is expressed as
\begin{equation}\label{fm}
\bX_t = \bB \bff_t + \bu_t
\end{equation}
where $\bB$ is the matrix of factor loadings and $\bu_t$, $t = 1,\ldots, T$, are independent errors with covariance matrix $\bSigma_u$. It is also assumed that $\bff_t$ are independent in time as well as independent of $\bu_t$. The estimation of the factor model or the covariance matrix resulting from the factor model with observable factors is considered by \citet{fan2008high}, whereas \citet{Bai2003inferential}, \citet{bai2012statistical}, \citet{fan2013large} present the results under the assumption that the factors are unobservable. Moreover, \citet{BaiNg2002determining}, \citet{hallin2007determining}, \citet{kapetanios2010testing}, \citet{onatski2010determining}, \citet{ahn2013eigenvalue} among others deal with the problem of determining the number of factors $K$ used in (\ref{fm}). Note that not in all models the factors are observable. For example, this is not the case in many applications in psychology or in multiple testing theory, and, consequently, the results derived in the present paper cannot be directly applied. On the other hand, factor models with observable variables are usually considered in economics and finance where we also provide two empirical illustrations of the obtained theoretical results.

Under the generic assumption that $\bSigma_u$ is a diagonal matrix, the dependence between the elements of $\bX_t$ is fully determined by the factors $\bff_t$. This means that the precision matrix of $\bY_t = (\bX^\top_t, \bff^\top_t )^\top$ has the following structure
\begin{equation}\label{prec_matr}
\bOmega = \{{\rm cov}(\bY_t)\}^{-1} = \left(
  \begin{array}{cc}
    \bOmega_{11} & \bOmega_{12} \\
    \bOmega_{21} & \bOmega_{22} \\
  \end{array}\right)\,,
\end{equation}
where $\bOmega_{21}=\bOmega_{12}^\top$ is a $p\times K$ matrix and $\bOmega_{11}$ is a diagonal $p\times p$ matrix, if the factor model (\ref{fm}) is true, i.e., if all linear dependencies among the components of $\bX_t$ are fully captured by the factor vector $\bff_t$. As a result, the test on the validity of the factor model (\ref{fm}) is equivalent to testing
\begin{equation}\label{test_fm}
H_0 : \bOmega_{11} = {\rm diag}(\omega_{11},\ldots,\omega_{pp}) \qquad \text{versus} \qquad H_1 : \bOmega_{11} \neq {\rm diag}(\omega_{11},\ldots,\omega_{pp})
\end{equation}
for some positive constants $\omega_{11},\ldots,\omega_{pp}$.

We contribute to the existing literature on factor models by deriving exact and asymptotic tests on the validity of the factor model which are based on testing (\ref{test_fm}). Furthermore, the distributions of the suggested test statistics are obtained under both hypotheses and also they are analyzed in detail when the dimension of the factor model tends to infinity as the sample size increases such that $p/(T-K)\longrightarrow c \in (0,1]$. This asymptotic regime is known in the statistical literature as double asymptotic regime or high-dimensional asymptotics.

Alternatively to the test (\ref{test_fm}), one can apply the classical goodness-of-fit test which is based on the estimated residuals given by
\[\hat{\bu}_t=\bX_t - \widehat{\bB} \bff_t \,,\]
where $\widehat{\bB}$ is an estimate of the factor loading matrix. This approach, however, does not always lead to reliable results. To see this, let $\bX=(\bX_1,\ldots,\bX_T)$, $\bF=(\bff_1,\ldots,\bff_T)$, and $\widehat{\bU}=(\hat{\bu}_1,\ldots,\hat{\bu}_T)$. If $\bB$ is estimated by applying the least square method, i.e., $\widehat{\bB}=\bX\bF^\top(\bF\bF^\top)^{-1}$, then
\[\widehat{\bU}=\bX-\widehat{\bB}\bF=\bX(\bI_T-\bF^\top(\bF\bF^\top)^{-1}\bF)\,,\]
where $\bI_T$ is the $T$-dimensional identity matrix. Under the assumption of normality it holds that $\bU|\bF\sim\mathcal{N}_{p,n}(\mathbf{0},\bSigma_u\otimes \bI_n)$ ($p \times n$ dimensional matrix variate normal distribution with zero mean matrix and covariance matrix $\bSigma_u\otimes \bI_n$) and, consequently, $\widehat{\bU}|\bF\sim\mathcal{N}_{p,n}(\mathbf{0},\bSigma_u\otimes (\bI_n-\bF^\top(\bF\bF^\top)^{-1}\bF))$. Hence, $(\hat{\bu}_t)_{t=1,\ldots,T}$ are autocorrelated and their distribution depends on the factor matrix $\bF$, although the true residuals $(\bu_t)_{t=1,\ldots,T}$ are independent and their distribution does not depend on $\bF$. This unpleasant property of $\widehat{\bU}$ surely influences testing procedures based on $\hat{\bu}_t$. It is remarkable that in contrast to the test based on the covariance matrix of the residuals, the suggested approach which is based on the precision matrix does not suffer from this problem. Moreover, our tests can be applied without imposing an additional identifiability condition on the model, i.e., it is not assumed that the factors are orthogonal, since the estimator for the matrix of factor loadings does not play any role in the derived test theory.

The rest of the paper is organized as follows. In the next section, we provide the mathematical motivation for the testing procedures (considered in the paper). In Section 3, two finite sample tests are suggested which are constructed in two steps. First, marginal test statistics are constructed and then the maxima of the marginal test statistics are calculated. We further prove that the distributions of the maxima do not depend on $\omega_{11},\ldots,\omega_{pp}$ and, consequently, the corresponding critical values can be calibrated via simulations. In Section 4, the likelihood ratio test is investigated. Similarly to the tests of Section 3, the distribution of the likelihood ratio statistic does not depend on $\omega_{11},\ldots,\omega_{pp}$ used in (\ref{test_fm}) under the null hypothesis. The results are extended to the case of high-dimensional factor models in Section 5. Here, the high-dimensional asymptotic distributions of the test statistics considered in Sections 3 and 4 are derived. The cases of $c< 1$ and $c=1$ are treated separately in detail. The results of the simulation study in Section 6 illustrate the size and power of the suggested tests, whereas an empirical study is provided in Section 7. We summarize our findings in Section 8. Proofs are given in the appendix.

\section{Mathematical Motivation of Three Tests}

A test on the hypothesis (\ref{test_fm}) can be performed in different ways. Below, we provide a full mathematical motivation for the three approaches considered in the paper.

The first method is based on testing the hypothesis that all non-diagonal elements of $\bOmega_{11}$ are equal to zero, i.e.,
\begin{equation}\label{test_fm_nondiag}
H_0 : \omega_{ij} = 0 ~ \text{for}~ 1\le j<i\le p \qquad \text{versus} \qquad H_1 : \omega_{ij}\neq 0~ \text{for at least one $(i,j)$,}
\end{equation}
where $\bOmega=(\omega_{ij})_{i,j \in \{1,\ldots,p+T\}}$.

The second approach is based on the following result\\[0.2cm]

\begin{lemma}\label{lem1}
Let $\bA=(a_{ij})_{i,j=1,\ldots,q}$ be a symmetric positive-definite matrix and let $\mathbf{B}=\bA^{-1}=(b_{ij})_{i,j=1,\ldots,q}$. Then $a_{ii}b_{ii}\ge 1$ holds for all $i=1,\ldots,q$ and $\bA$ is a diagonal matrix if and only if $a_{ii}b_{ii}=1$ for all $i=1,\ldots,q$.
\end{lemma}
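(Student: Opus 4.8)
The plan is to express each diagonal entry $b_{ii}$ of $\bB=\bA^{-1}$ through a Schur complement and to read off both the inequality and its equality case directly. Fix $i$ and write, after a symmetric permutation moving index $i$ to the last position,
\[
\bA=\begin{pmatrix}\bA_{(i)} & \ba_i\\ \ba_i^\top & a_{ii}\end{pmatrix},
\]
where $\bA_{(i)}$ is the principal submatrix of $\bA$ obtained by deleting row and column $i$, and $\ba_i$ collects the off-diagonal entries $a_{ji}$, $j\neq i$. Since $\bA$ is symmetric positive-definite, so is $\bA_{(i)}$, and the standard block-inversion formula gives
\[
b_{ii}=\bigl(a_{ii}-\ba_i^\top\bA_{(i)}^{-1}\ba_i\bigr)^{-1}.
\]

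Because $\bA_{(i)}^{-1}$ is positive-definite, the quadratic form $\ba_i^\top\bA_{(i)}^{-1}\ba_i$ is nonnegative, hence $0<a_{ii}-\ba_i^\top\bA_{(i)}^{-1}\ba_i\le a_{ii}$, which yields $b_{ii}\ge a_{ii}^{-1}$, i.e.\ $a_{ii}b_{ii}\ge1$. Moreover, equality $a_{ii}b_{ii}=1$ holds if and only if $\ba_i^\top\bA_{(i)}^{-1}\ba_i=0$, and since $\bA_{(i)}^{-1}$ is positive-definite this is equivalent to $\ba_i=\bzero$, i.e.\ to all off-diagonal entries in the $i$-th row (equivalently, by symmetry, the $i$-th column) of $\bA$ being zero.

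The equivalence for the whole matrix is then immediate: if $a_{ii}b_{ii}=1$ for every $i$, then $\ba_i=\bzero$ for every $i$, so all off-diagonal entries of $\bA$ vanish and $\bA$ is diagonal; conversely, if $\bA={\rm diag}(a_{11},\ldots,a_{qq})$ then $\bB={\rm diag}(a_{11}^{-1},\ldots,a_{qq}^{-1})$ and $a_{ii}b_{ii}=1$ for all $i$. I do not expect a genuine obstacle here; the only point requiring a little care is the equality case, where one must invoke positive-\emph{definiteness} of $\bA_{(i)}^{-1}$ (not merely positive-semidefiniteness) to pass from $\ba_i^\top\bA_{(i)}^{-1}\ba_i=0$ to $\ba_i=\bzero$. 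As an alternative one-line route for the inequality, Cauchy--Schwarz in the inner products induced by $\bA^{1/2}$ and $\bA^{-1/2}$ gives $1=(\mathbf{e}_i^\top\mathbf{e}_i)^2\le(\mathbf{e}_i^\top\bA\mathbf{e}_i)(\mathbf{e}_i^\top\bA^{-1}\mathbf{e}_i)=a_{ii}b_{ii}$, with equality for every $i$ precisely when each $\mathbf{e}_i$ is an eigenvector of $\bA$, i.e.\ when $\bA$ is diagonal.
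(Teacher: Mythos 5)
Your proof is correct and follows essentially the same route as the paper's: both apply the partitioned-inverse (Schur complement) formula to express $b_{ii}$, deduce $a_{ii}b_{ii}\ge 1$ from positive definiteness of the relevant Schur complement, and characterize equality by the vanishing of the off-diagonal vector $\ba_i$. The Cauchy--Schwarz remark at the end is a nice, valid alternative for the inequality, but the core argument matches the paper's.
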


\vspace{0.8cm}
The proof of Lemma \ref{lem1} is given in the appendix. This result motivates the reformulation of the hypothesis (\ref{test_fm}) in the following way
\begin{equation}\label{test_fm_prod-diag}
H_0 : \omega_{jj}\omega_{jj}^{(-)} = 1 ~ \text{for}~ 1\le j\le p \qquad \text{versus} \qquad H_1 : \omega_{jj}\omega_{jj}^{(-)}> 1 ~ \text{for at least one $j$,}
\end{equation}
where $\bOmega_{11}^{-1}=(\omega_{ij}^{(-)})_{i,j \in \{1,\ldots,p\}}$.

The third procedure is based on Hadamard's inequality (see, e.g., Section 4.2.6 of \citet{lutkepohl1997handbook}): for any positive definite symmetric matrix $\bA$ it holds that
\[{\rm det}(\bA)\le \prod_{i=1}^p a_{ii}\,,\]
with equality only if $\bA$ is a diagonal matrix. This approach leads to the hypothesis expressed as
\begin{equation}\label{test_fm_det}
H_0 : \frac{\prod_{i=1}^p \omega_{ii}}{{\rm det}(\bOmega _{11})} = 1 \qquad \text{versus} \qquad H_1 : \frac{\prod_{i=1}^p \omega_{ii}}{{\rm det}(\bOmega _{11})} > 1 \,.
\end{equation}

The test statistics for the null hypotheses (\ref{test_fm_nondiag}) and (\ref{test_fm_prod-diag}) are presented in Section 3, whereas testing (\ref{test_fm_det}) leads to the likelihood ratio test of Section 4.

\section{Small Sample Tests: $p,T$ are Finite}

Let
\begin{equation}\label{sample_cov_matr}
\bS = \frac{1}{T} \bY\bY^\top
\end{equation}
be the sample covariance matrix calculated for the sample $\bY_1,\ldots,\bY_T$ with $\bY=(\bY_1,\ldots,\bY_T)$. It is used to estimate $\bSigma=\bOmega^{-1}$. In (\ref{sample_cov_matr}) the sample mean vector of $\bY_t$ is not subtracted since the population mean vector is zero following (\ref{fm}) and the assumptions that $E(\bff_t)=\mathbf{0}$ and $E(\bu_t)=\mathbf{0}$. If model (\ref{fm}) is extended by adding a mean vector, i.e., to
\begin{equation}\label{fm_ext}
\bX_t = \bol{\mu}+\bB \bff_t + \bu_t\,,
\end{equation}
then the covariance matrix should be estimated by
\begin{equation}\label{sample_cov_matr_ext}
\widetilde{\bS} = \frac{1}{T-1} \bY \left(\bI_T-\frac{1}{T}\mathbf{J}_T\right)\bY^\top\,,
\end{equation}
where $\mathbf{J}_T$ is the $T \times T$ matrix of ones.

Assuming that both $\{\bff_t\}$ and $\{\bu_t\}$ are independent and identically distributed sequences from a multivariate normal distribution, we get that $T\bS \sim \mathcal{W}_{p+K}(T,\bSigma)$ ($(p+K)$-dimensional Wishart distribution with $T$ degrees of freedom and covariance matrix $\bSigma$). Consequently, $\bV=(T\bS)^{-1} \sim \mathcal{W}^{-1}_{p+K}(T+p+K+1,\bOmega)$ for $p+K<T$ (see, Theorem 3.4.1 in \citet{GuptaNagar2000}).

Similarly, we get that $(T-1)\widetilde{\bS} \sim \mathcal{W}_{p+K}(T-1,\bSigma)$ in the case of model (\ref{fm_ext}). Consequently, without loss of generality, we put $\bol{\mu}=\mathbf{0}$ in the rest of the paper, since the derived test statistics are fully determined by the elements of $\bS$ and in the case of $\widetilde{\bS}$ only a minor adjustment is needed. We further note that the assumption of normality is not restrictive in many applications. For instance, the asset returns at weekly or smaller frequency are well described by the normal distribution (see, \citet{Fama1976}). Moreover, \citet{TuZhou2004} find no benefits of heavy tailed distributions for the mean-variance investor and pointed out that the application of the normal assumption instead of a heavy tailed distribution leads to a relative small amount of losses.

Let $\bV=(v_{ij})_{i,j=1,\ldots,p+K}$ and let $\bV$ be partitioned as
\begin{equation}\label{part_sampl_prec}
\bV=\left(
  \begin{array}{cc}
    \bV_{11} & \bV_{12} \\
    \bV_{21} & \bV_{22} \\
  \end{array}\right) \quad \text{with} \quad \bV_{11}:p \times p\,.
\end{equation}

\subsection{Test Based on Each Non-Diagonal Element of $\bOmega_{11}$}
Testing hypothesis (\ref{test_fm_nondiag}) can also be considered as the global test of the marginal tests with hypotheses given by
\begin{equation}\label{test_fm_nondiag_marg}
H_{0,ij} : \omega_{ij} = 0 ~ \qquad \text{versus} \qquad H_{1,ij} : \omega_{ij}=d_{ij}\neq 0
\end{equation}
for $1\le j <i \le p$. In terms of multiple testing theory we are thus interested in testing the global hypothesis $H_0=\bigcap_{1\le j<i \le p} H_{0,ij}$. For each hypothesis in \eqref{test_fm_nondiag_marg}, $1\le j<i \le p$, we consider the following test statistic
\begin{equation}\label{test_stat_nondiag_marg}
T_{ij}=(T-K-p+1)\frac{g_{ij}^2}{1-g_{ij}^2} \quad \text{with} \quad g_{ij}=\frac{v_{ij}}{\sqrt{v_{ii}v_{jj}}}\,,
\end{equation}
The expression of $T_{ij}$ corresponds to the statistic used in testing for the uncorrelatedness between two random variables (see Section 5 of \citet{Muirhead}), although differences in the normalizing factor and in the distribution of the test statistics are present.

Let $\mathcal{F}_{i,j}$ denote the $\mathcal{F}$-distribution with degrees $i$ and $j$ and let $F_{i,j}$ be the corresponding density function. In the following we make also use of the hypergeometric function given by (see, \citet{AbramowitzStegun1964})
\[ _2F_1(a,b,c;x) = \frac{\Gamma(c)}{\Gamma(a)\Gamma(b)} \; \sum_{i=0}^\infty \frac{\Gamma(a+i) \Gamma(b+i)}{\Gamma(c+i)} \; \frac{z^i}{i!} \; . \]

The distribution of the test statistic $T_{ij}$ is obtained both under $H_{0,ij}$ and under $H_{1,ij}$ and it is presented in Theorem \ref{th1a}.\\[0.2cm]

\begin{theorem} \label{th1a}
Let $\bX_t$ follow model (\ref{fm}) where $\bff_t$ and $\bu_t$ are independent and normally distributed. Then:
\begin{enumerate}[(a)]
\item The density of $T_{ij}$ is given by
\begin{eqnarray*}
F_{T_{ij}}(x)&=& F_{1,T-K-p+1}(x) (1+\lambda_{ij})^{-(T-K-p+2)/2}\\
&\times& \,_2F_1\Big(\frac{T-K-p+2}{2},\frac{T-K-p+2}{2},\frac{1}{2};
\frac{x}{T-K-p+1+x} \; \frac{\lambda_{ij}}{1+\lambda_{ij}}  \Big) \,,
\end{eqnarray*}
where $\lambda_{ij} = d_{ij}^2/\{\omega_{jj}(\omega_{ii}-d_{ij}^2/\omega_{jj})\}$.
\item Under $H_{0,ij}$ it holds that $T_{ij} \sim \mathcal{F}_{1,T-K-p+1}$.
\end{enumerate}
\end{theorem}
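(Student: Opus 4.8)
The plan is to start from the distributional fact established in Section 3, namely that $\bV = (T\bS)^{-1} \sim \mathcal{W}^{-1}_{p+K}(T+p+K+1,\bOmega)$, and to extract the joint distribution of the relevant $2\times 2$ sub-block of $\bV$ indexed by $\{i,j\}$. Since $g_{ij} = v_{ij}/\sqrt{v_{ii}v_{jj}}$ and $T_{ij}$ is a monotone function of $g_{ij}^2$, everything reduces to understanding the law of $g_{ij}$. First I would invoke the standard marginalisation property of the inverse Wishart: the principal $2\times 2$ submatrix of $\bV$ corresponding to rows and columns $i,j$ is itself inverse Wishart, with a shifted degrees-of-freedom parameter, and parameter matrix equal to the corresponding $2\times 2$ block of $\bOmega$ — but one must be careful, because marginals of inverse Wishart matrices are \emph{not} simply obtained by taking sub-blocks of the parameter matrix; rather one has to pass through the Wishart world. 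Concretely, $\bV^{-1} = T\bS \sim \mathcal{W}_{p+K}(T,\bOmega^{-1})$, and conditioning/marginalising appropriately one gets that the $2\times2$ block $\bV_{\{i,j\}}$ has the same distribution as the inverse of a certain $2\times2$ Wishart-type matrix whose scale involves the Schur complement, i.e. effectively the $2\times2$ block of $\bOmega$ after accounting for the other coordinates.

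The key algebraic identity I would use is that for any symmetric positive-definite $\bA$ with inverse $\bB$, the $2\times2$ submatrix $\bB_{\{i,j\}}$ satisfies $(\bB_{\{i,j\}})^{-1} = \bA_{\{i,j\}} - \bA_{\{i,j\},\text{rest}}\,(\bA_{\text{rest}})^{-1}\,\bA_{\text{rest},\{i,j\}}$, the Schur complement; applied to $\bV$ this reduces the problem to a genuinely $2$-dimensional Wishart computation. Under $H_{1,ij}$, $\omega_{ij}=d_{ij}$, and the off-diagonal correlation parameter of the relevant $2\times 2$ target matrix is governed precisely by $\lambda_{ij} = d_{ij}^2/\{\omega_{jj}(\omega_{ii}-d_{ij}^2/\omega_{jj})\}$, which is (up to normalisation) the squared population partial correlation associated to the pair $(i,j)$. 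From here, the distribution of the sample correlation coefficient $g_{ij}$ computed from a $2\times2$ Wishart matrix with $n := T-K-p+1$ "effective" degrees of freedom and population correlation $\rho_{ij}$ with $\rho_{ij}^2/(1-\rho_{ij}^2) = \lambda_{ij}$ is classical (see Muirhead, Section 5): its density is expressible via the Gaussian hypergeometric function $_2F_1$. Pushing this density through the monotone transformation $x \mapsto n x /(1+x)$ that maps $g_{ij}^2$ to $T_{ij}$, and collecting the Jacobian together with the normalising constants, yields exactly the stated formula in part (a), with the factor $F_{1,n}(x)(1+\lambda_{ij})^{-(n+1)/2}$ out front and the $_2F_1\big(\tfrac{n+1}{2},\tfrac{n+1}{2},\tfrac12; \tfrac{x}{n+x}\cdot\tfrac{\lambda_{ij}}{1+\lambda_{ij}}\big)$ factor.

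For part (b), under $H_{0,ij}$ we have $d_{ij}=0$, hence $\lambda_{ij}=0$, so $(1+\lambda_{ij})^{-(n+1)/2}=1$ and $_2F_1(\cdot,\cdot,\cdot;0)=1$; the density collapses to $F_{1,n}(x)$, i.e. $T_{ij}\sim\mathcal{F}_{1,n}$ with $n = T-K-p+1$. This is exactly the classical statement that, with zero population (partial) correlation, $n g_{ij}^2/(1-g_{ij}^2)$ is $\mathcal{F}_{1,n}$-distributed, which also confirms that the non-central density integrates to one, serving as a sanity check.

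The main obstacle I anticipate is the careful bookkeeping of the degrees-of-freedom shift and of the correct "effective" $2\times 2$ parameter matrix when marginalising the inverse Wishart $\bV$ down to the pair $\{i,j\}$: one needs to track how the parameter $T+p+K+1$ of $\mathcal{W}^{-1}_{p+K}$ translates to the parameter of $\mathcal{W}^{-1}_2$ for the sub-block, and to verify that the Schur-complement construction produces a target correlation whose functional form matches $\lambda_{ij}$ precisely as defined. Once that identification is pinned down, the remainder is the (standard but slightly involved) manipulation of the non-central sample-correlation density of Muirhead and a change of variables, which I would not carry out in full detail here.
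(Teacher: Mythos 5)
Your proposal is correct, and it reaches the stated density by a genuinely different route from the paper. The paper obtains Theorem \ref{th1a} as a corollary of a general auxiliary result (Lemma \ref{lem2} in the appendix): for the statistic $Z_j$ built from $\bL\bv_{21,j}$ it conditions on $\bQ_j$, uses that $\bv_{21,j}/v_{jj}\mid\bQ_j$ is normal while $\omega_{jj}/v_{jj}\sim\chi^2_{T-K-p+1}$ is independent of it, obtains a conditionally noncentral $\mathcal{F}$-distribution, and then integrates the noncentrality parameter against the Wishart law of $(\bL\bQ_j\bL^\top)^{-1}$ to produce the $_2F_1$ series; Theorem \ref{th1a} is then the special case $q=1$ with $\bL$ a coordinate vector. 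You instead marginalise $\bV$ directly to the $2\times2$ block indexed by $\{i,j\}$ --- correctly passing through the Wishart world, since $\bV_{\{i,j\}}^{-1}$ is the Schur complement of $T\bS$ and hence $\mathcal{W}_2\bigl(T-K-p+2,(\bOmega_{\{i,j\}})^{-1}\bigr)$ --- and then invoke the classical non-null density of the squared sample correlation coefficient of a bivariate Wishart matrix. Your identification of the target correlation is exactly right: $\rho^2=d_{ij}^2/(\omega_{ii}\omega_{jj})$, so $\rho^2/(1-\rho^2)=\lambda_{ij}$, $(1+\lambda_{ij})^{-1}=1-\rho^2$ and $x/(T-K-p+1+x)=g_{ij}^2$, which turns the $_2F_1$ form of the correlation density into the stated expression after the change of variables, and part (b) follows by setting $\rho=0$. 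The one bookkeeping point to fix is that the effective bivariate Wishart has $T-K-p+2$ degrees of freedom rather than the $T-K-p+1$ you quote; your final exponents $(n+1)/2$ with $n=T-K-p+1$ are nevertheless the correct ones, so this is a labelling issue rather than an error. As for what each approach buys: the paper's unified lemma covers both $T_{ij}$ and the column statistic $T_j$ of Theorem \ref{th1} with a single computation, whereas your reduction is more self-contained for this particular statement and makes transparent that $T_{ij}$ is precisely the classical test for a vanishing partial correlation, with $\lambda_{ij}$ the associated squared-partial-correlation odds.
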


\vspace{0.8cm}
The proof of Theorem \ref{th1a} is given in the appendix. Since the test statistics $(T_{ij})_{1\le j < i \le p}$ under the global hypothesis $H_0$ have the same distribution, we consider single-step multiple tests for testing (\ref{test_fm_nondiag}). The test statistic is given by
\begin{equation}\label{T_el}
T_{el}=\max_{1\le j<i \le p} T_{ij}\,.
\end{equation}

The marginal critical values for the $ij$-marginal test are derived from the equality
\begin{equation}\label{T_el_cond_ij}
{\rm Pr}_{H_0}(T_{el}>c_{1-\alpha}^{(el)})\le \alpha \,.
\end{equation}
Solving (\ref{T_el_cond_ij}) is a challenging problem since the test statistics $(T_{ij})_{1\le j < i \le p}$ are dependent. The first possibility to deal with this problem is the application of a Bonferroni correction. This leads to
\begin{equation*}
c_{1-\alpha}^{(el,B)}=F_{1,T-K-p+1;1-2\alpha/p(p-1)}\,,
\end{equation*}
where $F_{1,T-K-p+1;1-2\alpha/p(p-1)}$ stands for the $\left\{1-2\alpha/p(p-1)\right\}$-quantile of the $\mathcal{F}$-distribution with $1$ and $T-K-p+1$ degrees of freedom.

The second possibility is based on the observation that the expressions of the test statistics $(T_{ij})_{1\le j<i\le p}$ remain the same if $\bV_{11}$ is replaced by $\bD\bV_{11}\bD$ for any diagonal matrix $\bD$ of an appropriate order. Hence, the joint distribution of $(T_{ij})_{1\le j < i \le p}$ under the global hypothesis $H_0$ does not depend on $\omega_{11},\ldots,\omega_{pp}$. As a result, the critical values of the marginal tests $c_{1-\alpha}^{(el)}$ can be calibrated via simulations by generating a sample from the inverse Wishart distribution with $T-K+p+1$ degrees of freedom and identity parameter matrix. Under the alternative hypothesis, however, the distribution of $T_{el}$ cannot be obtained explicitly and needs to be explored by simulations. This point is discussed in more detail in Section 6 where the powers of the suggested tests are compared with each other.

\subsection{Test Based on the Product of Diagonal Elements of $\bOmega_{11}$ and $\bOmega_{11}^{-1}$}
Testing hypothesis (\ref{test_fm_prod-diag}) can be considered as the global hypothesis of the multiple tests whose hypotheses are given by
\begin{equation}\label{test_fm_prod-diag_marg}
H_{0,j} : \omega_{jj}\omega_{jj}^{(-)} = 1  \qquad \text{versus} \qquad H_{1,j} : \omega_{jj}\omega_{jj}^{(-)}=d_j > 1 \,,
\end{equation}
for $j=1,\ldots,p$, i.e., $H_{0}=\bigcap_{1\le j\le p} H_{0,j}$.

Similarly to Section 3.1, we first consider a test for the marginal hypothesis $H_{0,j}$. Let
$\bV_{11}^{(-)} =(v_{ij}^{(-)})_{i,j=1,\ldots,p}$. Then the test statistic for testing (\ref{test_fm_prod-diag_marg}) is given by
\begin{equation}\label{test_stat_col}
T_j = \frac{T-K-p+1}{p-1} (v_{jj} v_{jj}^{(-)}-1)\,.
\end{equation}

In Theorem \ref{th1} we present the exact distribution of $T_j$ under the null $H_{0,j}$ as well as under the alternative $H_{1,j}$ hypotheses.\\[0.2cm]

\begin{theorem} \label{th1}
Let $\bX_t$ follow model (\ref{fm}) where $\bff_t$ and $\bu_t$ are independent and normally distributed. Then:
\begin{enumerate}[(a)]
\item The density of $T_j$ is given by
\begin{eqnarray*}
F_{T_j}(x)&=& F_{p-1,T-K-p+1}(x) (1+\lambda_j)^{-(T-K)/2}\\
&\times& \,_2F_1\Big(\frac{T-K}{2},\frac{T-K}{2},\frac{p-1}{2};
\frac{(p-1)x}{T-K-p+1+(p-1)x} \; \frac{\lambda_j}{1+\lambda_j}  \Big) \,,
\end{eqnarray*}
where $\lambda_j = (\omega_{jj}\omega_{jj}^{(-)}-1)$.
\item Under $H_{0_j}$ it holds that $T_j \sim \mathcal{F}_{p-1,T-K-p+1}$.
\end{enumerate}
\end{theorem}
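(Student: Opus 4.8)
\emph{Proof strategy.} The plan is to reduce $T_j$ to a classical sample multiple correlation coefficient computed from a single Wishart matrix and then invoke its known noncentral density. First I would pass from the inverse Wishart $\bV$ to the Wishart $\bW:=\bV^{-1}=T\bS\sim\mathcal W_{p+K}(T,\bSigma)$ and partition $\bW$ conformably with \eqref{part_sampl_prec}. The block‑inverse formula gives $\bV_{11}=(\bW_{11}-\bW_{12}\bW_{22}^{-1}\bW_{21})^{-1}=\bW_{11\cdot 2}^{-1}$, hence $\bV_{11}^{-1}=\bW_{11\cdot 2}=:\bM$, so that for $j\le p$ one has $v_{jj}^{(-)}=(\bM)_{jj}$ and $v_{jj}=(\bM^{-1})_{jj}$. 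By the distribution of the Schur complement of a Wishart matrix (\citet{Muirhead}), $\bM\sim\mathcal W_p(T-K,\bSigma_{11\cdot 2})$ with $\bSigma_{11\cdot 2}=\bOmega_{11}^{-1}$ by the inverse‑partition identity; in particular the degrees of freedom drop from $T$ to $T-K$, which is the source of the factor $T-K$ in the statement.

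The second step rewrites the statistic in terms of $\bM$ alone. Using $(\bM^{-1})_{jj}=1/\bM_{jj\cdot(-j)}$ with $\bM_{jj\cdot(-j)}=m_{jj}-\bM_{j,-j}\bM_{-j,-j}^{-1}\bM_{-j,j}$, I obtain $v_{jj}v_{jj}^{(-)}=m_{jj}/\bM_{jj\cdot(-j)}=1/(1-R_j^2)$, where $R_j^2=\bM_{j,-j}\bM_{-j,-j}^{-1}\bM_{-j,j}/m_{jj}$ is exactly the sample squared multiple correlation coefficient between coordinate $j$ and the remaining $p-1$ coordinates based on $\bM$. Hence $v_{jj}v_{jj}^{(-)}-1=R_j^2/(1-R_j^2)$ and $T_j=\frac{T-K-p+1}{p-1}\,\frac{R_j^2}{1-R_j^2}$ is a function of $\bM$ only, so its law is completely determined by the law of $\bM$.

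Third, I would identify the relevant population quantity: for $\bM\sim\mathcal W_p(T-K,\bOmega_{11}^{-1})$ the population squared multiple correlation is $\bar\rho_j^2=1-\{(\bOmega_{11}^{-1})_{jj}(\bOmega_{11})_{jj}\}^{-1}=1-(\omega_{jj}^{(-)}\omega_{jj})^{-1}=\lambda_j/(1+\lambda_j)$, since $\lambda_j=\omega_{jj}\omega_{jj}^{(-)}-1$; in particular $1-\bar\rho_j^2=(1+\lambda_j)^{-1}$. Because the law of a sample multiple correlation coefficient depends on the underlying Wishart only through its degrees of freedom, its dimension and the population coefficient, the classical density formula (\citet{Muirhead}, with $n=T-K$) applies, yielding for $0<r<1$
\[
f_{R_j^2}(r)=\frac{(1-\bar\rho_j^2)^{(T-K)/2}}{B\!\left(\tfrac{p-1}{2},\tfrac{T-K-p+1}{2}\right)}\,r^{(p-3)/2}(1-r)^{(T-K-p-1)/2}\,{}_2F_1\!\Big(\tfrac{T-K}{2},\tfrac{T-K}{2},\tfrac{p-1}{2};\bar\rho_j^2 r\Big).
\]

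Finally I would apply the monotone change of variables $x=\frac{T-K-p+1}{p-1}\cdot\frac{r}{1-r}$, i.e. $r=\frac{(p-1)x}{T-K-p+1+(p-1)x}$. After multiplying by the Jacobian and collecting the powers of $T-K-p+1+(p-1)x$, the prefactor reorganizes into the $\mathcal F_{p-1,T-K-p+1}$ density; substituting $1-\bar\rho_j^2=(1+\lambda_j)^{-1}$ into the first factor and $\bar\rho_j^2 r=\frac{(p-1)x}{T-K-p+1+(p-1)x}\cdot\frac{\lambda_j}{1+\lambda_j}$ into the ${}_2F_1$ argument gives part (a). Part (b) is the special case $\lambda_j=0$: then $\bar\rho_j^2=0$, both extra factors equal $1$, and $T_j\sim\mathcal F_{p-1,T-K-p+1}$. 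The main obstacle is the first reduction: one must see that conditioning the precision‑matrix block on the factors produces a genuine Wishart matrix (with $T-K$ degrees of freedom) whose multiple correlation coefficient is precisely $1-(v_{jj}v_{jj}^{(-)})^{-1}$, and match its population value to the noncentrality $\lambda_j$; once this dictionary is in place, the rest is the well‑documented noncentral $R^2$ law plus a one‑dimensional transformation.
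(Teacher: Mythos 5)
Your proposal is correct, and it reaches the result by a genuinely different route than the paper. The paper proves Theorem~\ref{th1} as a special case ($\bL=\bI_{p-1}$, $q=p-1$) of Lemma~\ref{lem2}, which is established from scratch: starting from the inverse-Wishart law of $\bV_{11}$, the authors condition $\bv_{21,j}/v_{jj}$ on $\bQ_j$ to get a noncentral $\mathcal F$-law with random noncentrality $\lambda_j(\bC)$, integrate against the Wishart density of $(\bL\bQ_j\bL^\top)^{-1}$ using moments of quadratic forms, and resum the resulting series into the ${}_2F_1$. You instead observe that $\bV_{11}^{-1}=\bW_{11\cdot2}\sim\mathcal W_p(T-K,\bOmega_{11}^{-1})$ is a Schur complement of the Wishart matrix $T\bS$, that $v_{jj}v_{jj}^{(-)}-1=R_j^2/(1-R_j^2)$ for the sample squared multiple correlation $R_j^2$ of that Wishart, and that the population value satisfies $1-\bar\rho_j^2=(1+\lambda_j)^{-1}$; the stated density is then the classical noncentral $R^2$ law (Muirhead, Theorem 5.2.4, with $n=T-K$) pushed through the monotone map $x=\tfrac{T-K-p+1}{p-1}\,\tfrac{r}{1-r}$, and part (b) is the case $\bar\rho_j^2=0$. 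Your identification of $\lambda_j=\omega_{jj}\omega_{jj}^{(-)}-1$ with $\bar\rho_j^2/(1-\bar\rho_j^2)$ is consistent with the paper's noncentrality $\omega_{jj}^{-1}\bomega_{21,j}^\top\bXi_j^{-1}\bomega_{21,j}$ via the partitioned-inverse formula, so the two parametrizations agree. What your route buys is brevity and a transparent statistical interpretation of $T_j$ as an $\mathcal F$-transform of a multiple correlation coefficient; what the paper's route buys is self-containedness and generality --- Lemma~\ref{lem2} with arbitrary $\bL$ simultaneously yields Theorem~\ref{th1a} ($q=1$) and Theorem~\ref{th1} ($q=p-1$), and the intermediate conditional distributions ($\omega_{jj}/v_{jj}\sim\chi^2_{T-K-p+1}$, the conditional $\chi^2_{q,\lambda_j(\bC)}$) are reused directly in the proofs of the high-dimensional Theorems~\ref{th3a} and~\ref{th3}.
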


\vspace{0.8cm}
For testing the global hypothesis $H_{0}=\bigcap_{1\le j\le p} H_{0,j}$ in (\ref{test_fm_prod-diag}) we consider
\begin{equation}\label{T_el}
T_{pr}=\max_{1\le j \le p} T_{j}\,,
\end{equation}
where the critical value is obtained as a solution of
\begin{equation}\label{T_el_cond}
{\rm Pr}_{H_0}(T_{pr}>c_{1-\alpha}^{(pr)})\le \alpha \,.
\end{equation}
Since the derivation of the joint distribution of $(T_{j})_{1\le j \le p}$ is a complicated task, we consider two procedures how $c_{1-\alpha}^{(pr)}$ can be determined. The first procedure makes use of a Bonferroni correction. In this case, using Theorem \ref{th1}.(b) we get
\begin{equation*}
c_{1-\alpha}^{(pr,B)}=F_{p-1,T-K-p+1;1-\alpha/p}\,.
\end{equation*}

The second procedure is based on the following result.\\[0.2cm]

\begin{theorem} \label{th2}
Let $\bX_t$ follow model (\ref{fm}) where $\bff_t$ and $\bu_t$ are independent and normally distributed with diagonal matrix $\bSigma_u$. Then the distribution of $T_{pr}$ under $H_0$ is independent of $\omega_{11},\ldots,\omega_{pp}$.
\end{theorem}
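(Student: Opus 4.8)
The plan is to reduce the general null configuration to the single case $\bOmega_{11}=\bI_p$ by exploiting a scaling equivariance of the whole construction. Fix $\bOmega$ satisfying $H_0$, i.e.\ $\bOmega_{11}=\mathrm{diag}(\omega_{11},\ldots,\omega_{pp})$, and set $\bD=\mathrm{diag}(\omega_{11}^{1/2},\ldots,\omega_{pp}^{1/2})$ and $\bG=\mathrm{diag}(\bD,\bI_K)$. First I would note that the rescaled sample $\bG\bY_1,\ldots,\bG\bY_T$ is again a sample from a model of type (\ref{fm}) that satisfies $H_0$: the loading matrix becomes $\bD\bB$, the distribution of $\bff_t$ is unchanged, and the error covariance becomes $\bD\bSigma_u\bD$, which is still diagonal; the associated precision matrix is $\tilde\bOmega=\bG^{-1}\bOmega\bG^{-1}$, whose upper-left block equals $\bD^{-1}\bOmega_{11}\bD^{-1}=\bI_p$ by the choice of $\bD$. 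Since $\tilde\bOmega$ no longer carries the parameters $\omega_{11},\ldots,\omega_{pp}$, it suffices to show that $T_{pr}$ is numerically invariant under this rescaling.

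For the invariance step I would track the effect of $\bY_t\mapsto\bG\bY_t$ on $T_{pr}$. The sample covariance becomes $\bG\bS\bG$, hence $\bV=(T\bS)^{-1}$ becomes $\tilde\bV=\bG^{-1}\bV\bG^{-1}$. Because $\bG^{-1}=\mathrm{diag}(\bD^{-1},\bI_K)$ is diagonal, the diagonal entry of the full matrix satisfies $\tilde v_{jj}=\omega_{jj}^{-1}v_{jj}$ for $1\le j\le p$, while the upper-left block transforms as $\bV_{11}\mapsto\bD^{-1}\bV_{11}\bD^{-1}$, so that $\bV_{11}^{-1}\mapsto\bD\bV_{11}^{-1}\bD$ and $\tilde v_{jj}^{(-)}=\omega_{jj}v_{jj}^{(-)}$. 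The two factors cancel, giving $\tilde v_{jj}\tilde v_{jj}^{(-)}=v_{jj}v_{jj}^{(-)}$, so each $T_j$ of (\ref{test_stat_col}) and hence $T_{pr}=\max_{1\le j\le p}T_j$ take the same value for the sample $\bY_1,\ldots,\bY_T$ and for the rescaled sample.

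Finally I would combine the two parts through the elementary equivariance of the Wishart law: if $T\bS\sim\mathcal{W}_{p+K}(T,\bSigma)$ then $\bG(T\bS)\bG\sim\mathcal{W}_{p+K}(T,\bG\bSigma\bG)$, so $\tilde\bV$ has exactly the distribution of the sample precision matrix arising from a sample drawn under $\tilde\bOmega$. Since $T_{pr}$ evaluated at $\bV$ and at $\tilde\bV$ are equal as random variables, the law of $T_{pr}$ under $\bOmega$ coincides with its law under $\tilde\bOmega$, and the latter does not involve $\omega_{11},\ldots,\omega_{pp}$, which proves the assertion.

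The only genuinely delicate point, I expect, is the bookkeeping in the invariance step: one must keep in mind that $v_{jj}$ is a diagonal entry of the full $(p+K)$-dimensional matrix $\bV$, whereas $v_{jj}^{(-)}$ is a diagonal entry of the inverse of the $p\times p$ sub-block $\bV_{11}$, and verify that conjugation by the block-diagonal matrix $\bG$ multiplies these two quantities by reciprocal scalars. The companion remark, that conjugating a diagonal matrix by a diagonal matrix preserves diagonality, is what ensures that the rescaled problem still lies under $H_0$, making the reduction to $\bOmega_{11}=\bI_p$ legitimate.
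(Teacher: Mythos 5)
Your proposal is correct and rests on the same idea as the paper's proof: the statistic $T_j=\frac{T-K-p+1}{p-1}(v_{jj}v_{jj}^{(-)}-1)$ is invariant under the diagonal conjugation $\bV_{11}\mapsto\bD^{-1}\bV_{11}\bD^{-1}$, which by (inverse-)Wishart equivariance reduces the null to $\bOmega_{11}=\bI_p$. The only cosmetic difference is that the paper verifies this invariance through the quadratic-form representation $Z_j$ of Lemma~\ref{lem2}, whereas you verify it directly on the product $v_{jj}v_{jj}^{(-)}$; both are valid.
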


\vspace{0.8cm}
Therefore, the critical values $c_{1-\alpha}^{(pr)}$ for the multiple tests $T_{pr}$ can be calibrated via simulations by generating a sample from the $p$-dimensional inverse Wishart distribution with $T-K+p+1$ degrees of freedom and identity parameter matrix.

\section{Likelihood-Ratio Test}
In this section we derive a test statistics for testing (\ref{test_fm}) following the third approach outlined in Section 2. It is remarkable that this procedure leads to the likelihood ratio test.

Let ${\rm etr}(.)=\exp({\rm trace}(.))$ denote the exponential of the trace and let $\Gamma_p(.)$ be the $p$-dimensional gamma function defined by
\[\Gamma_p\left(\frac{n}{2}\right)=\pi^{\frac{p(p-1)}{4}}\prod_{i=1}^{p}
\Gamma\left(\frac{n-i+1}{2}\right)\,.\]
Then the density of $\bV=(T\bS)^{-1}$ is given explicitly by
\begin{eqnarray*}
f(\bV;\bOmega)&=& \frac{2^{-(p+K)T/2}}{\Gamma_{T+p}\left(\frac{T}{2}\right)}\frac{({\rm det} \bOmega)^{T/2}}{({\rm det} \bV)^{(T+p+K+1)/2}}
{\rm etr}\left(-\frac{1}{2}\bV^{-1}\bOmega\right)\\
&=& \frac{2^{-(p+K)T/2}}{\Gamma_{T+p}\left(\frac{T}{2}\right)}\frac{({\rm det} \bOmega_{11})^{T/2}\{{\rm det}(\bOmega_{22}-\bOmega_{21}\bOmega_{11}^{-1}\bOmega_{12})\}^{T/2}}{({\rm det} \bV)^{(T+p+K+1)/2}}
{\rm etr}\left(-\frac{1}{2}\bV_{11}^{-1}\bOmega_{11}\right)\nonumber\\
&\times&{\rm etr}\left\{-\frac{1}{2}(\bV_{22}-\bV_{21}\bV_{11}^{-1}\bV_{12})^{-1}(\bOmega_{22}-\bOmega_{21}\bOmega_{11}^{-1}\bOmega_{12})\right\}\nonumber\\
&\times&{\rm etr}\left\{-\frac{1}{2}(\bV_{22}-\bV_{21}\bV_{11}^{-1}\bV_{12})^{-1}(\bV_{21}\bV_{11}^{-1}-\bOmega_{21}\bOmega_{11}^{-1})\bOmega_{11}
(\bV_{21}\bV_{11}^{-1}-\bOmega_{21}\bOmega_{11}^{-1})^\top\right\}\,,\nonumber
\end{eqnarray*}
where the last equality is obtained by following the proof of Theorem 3 in \citet{BodnarOkhrin2008}. As the transformation from the set of parameters $(\bOmega_{11},\bOmega_{21},\bOmega_{22})$ to $(\bOmega_{11},\bPsi_{21}=\bOmega_{21}\bOmega_{11}^{-1}$, $\bPsi_{22}=\bOmega_{22}-\bOmega_{21}\bOmega_{11}^{-1}\bOmega_{12})$ is one-to-one (see, e.g., Proposition 5.8 in \citet{Eaton2007}), we rewrite the likelihood function of $\bV$ in terms of the parameters $(\bOmega_{11},\bPsi_{21},\bPsi_{22})$:
\begin{eqnarray}\label{F_bV}
f(\bV;\bOmega)
&=& \frac{2^{-(p+K)T/2}}{\Gamma_{T+p}\left(\frac{T}{2}\right)}\frac{({\rm det} \bOmega_{11})^{T/2}({\rm det} \bPsi_{22})^{T/2}}{({\rm det} \bV)^{(T+p+K+1)/2}}{\rm etr}\left(-\frac{1}{2}\bV_{11}^{-1}\bOmega_{11}\right)\\
&\times&{\rm etr}\left\{-\frac{1}{2}(\bV_{22}-\bV_{21}\bV_{11}^{-1}\bV_{12})^{-1}\bPsi_{22}\right\}\nonumber\\
&\times&{\rm etr}\left\{-\frac{1}{2}(\bV_{22}-\bV_{21}\bV_{11}^{-1}\bV_{12})^{-1}(\bV_{21}\bV_{11}^{-1}-\bPsi_{21})\bOmega_{11}
(\bV_{21}\bV_{11}^{-1}-\bPsi_{21})^\top\right\}\,.\nonumber
\end{eqnarray}

Let
\begin{eqnarray}\label{marg_tbW11}
g(\bV_{11};\bOmega_{11})=({\rm det}\bOmega_{11})^{\frac{T}{2}} {\rm etr} \left(-\frac{1}{2}\bV_{11}^{-1} \bOmega_{11} \right) \,.
\end{eqnarray}

It is noted that the third factor in (\ref{F_bV}) is always less than or equal to $1$ with equality if an only if $\bPsi_{21}=\bV_{21}\bV_{11}^{-1}$ for any given $\bOmega_{11}$. Moreover, using the multiplicative representation of the likelihood function and noting that no restrictions are imposed under $H_0$ in \eqref{test_fm} on $\bPsi_{21}$ and $\bPsi_{22}$, we get that the likelihood ratio test statistics is then given by
\begin{eqnarray}\label{LR}
T_{LR}^*&=&\frac{\sup_{\bOmega_{11}>0}g(\bV_{11};\bOmega_{11})}{\sup_{\omega_{11}>0,\ldots,\omega_{pp}>0} g\{\bV_{11};{\rm diag}(\omega_{11},\ldots,\omega_{pp})\}}\nonumber\\
&=&\frac{\sup_{\bOmega_{11}>0} ({\rm det} \bOmega_{11})^{\frac{T}{2}} {\rm etr} \left(-\frac{1}{2}\bV_{11}^{-1} \bOmega_{11} \right)}
{\sup_{\omega_{11}>0,\ldots,\omega_{pp}>0}\left(\prod_{i=1}^p \omega_{ii} \right)^{\frac{T}{2}} {\rm etr} \left\{-\frac{1}{2}\bV_{11}^{-1} {\rm diag}(\omega_{11},\ldots,\omega_{pp})\right\}}
\,.
\end{eqnarray}
The maximum of the numerator is reached at $\bOmega_{11}^*= T\bV_{11} $, whereas the maximum of the denominator is attained at $\omega_{ii}^*=T(v_{ii}^{(-)})^{-1}$ for $i=1,\ldots,p$ where $v_{ii}^{(-)}$ denotes the $i$-th diagonal element of $\bV_{11}^{-1}$. Hence,
\begin{equation}\label{LR_final_a}
T_{LR}^*= \frac{({\rm det}\bV_{11})^{\frac{T}{2}}}{\prod_{i=1}^p \{(v_{ii}^{(-)})^{-1}\}^{\frac{T}{2}}}
=\left(\frac{{\rm det}\bV_{11}^{-1}}{\prod_{i=1}^p v_{ii}^{(-)}}\right)^{-\frac{T}{2}}\,.
\end{equation}

Due to $\bV_{11} \sim \mathcal{W}_p^{-1}(T-K+p+1,\bOmega_{11})$ (see, Theorem 3 in \citet{BodnarOkhrin2008}), we get that $\bV_{11}^{-1} \sim \mathcal{W}_p(T-K,\bOmega_{11}^{-1})$. The last statement motivates the use of $(T_{LR}^*)^{(T-K)/T}$ instead of $T_{LR}^*$ which appears to be a well-known test statistic in multivariate analysis (see, e.g., Section 11 in \citet{Muirhead}). It is used to test the null hypothesis that the $p$ elements of a normally distributed random vector are independent which equivalently can be expressed as
\begin{equation}\label{test_fm_lr}
\tilde{H}_0 : \bOmega_{11}^{-1} = {\rm diag}(\tilde{\omega}_{11},\ldots,\tilde{\omega}_{pp}) \qquad \text{versus} \qquad \tilde{H}_1 : \bOmega_{11}^{-1} \neq {\rm diag}(\tilde{\omega}_{11},\ldots,\tilde{\omega}_{pp})
\end{equation}
for some positive constants $\tilde{\omega}_{11},\ldots,\tilde{\omega}_{pp}$, whereas the sample of size $T-K$ is used. It is noted that if the null hypothesis in \eqref{test_fm} is true then the null hypothesis in \eqref{test_fm_lr} is true and vice versa. Finally, we point out that testing \eqref{test_fm_lr} is also equivalent to testing whether the correlation matrix related to the covariance matrix $\bOmega_{11}^{-1}$ is the identity matrix (see, Section 7.4.3 in \citet{Rencher2002}).

We use the test statistic given by
\begin{equation}
T_{LR}=2\rho \ln (T_{LR}^*)^{(T-K)/T} \quad \text{with} \quad \rho=1-\frac{2p+5}{6(T-K)} \,,
\end{equation}
which is asymptotically $\chi^2_f$-distributed with $f=p(p-1)/2$ degrees of freedom under the null hypothesis in \eqref{test_fm_lr} (see, e.g., Section 7.4.3 in \citet{Rencher2002}). Since the expression of $T_{LR}$ remains unchanged if $\bV_{11}$ is replaced by $\bD\bV_{11}\bD$ for any diagonal matrix $\bD$ of an appropriate order, the distribution of $T_{LR}$ does not depend on $\omega_{11},\ldots,\omega_{pp}$ under $H_0$. Hence, the critical value of this test can be calibrated by generating a sample from the inverse Wishart distribution with $T-K+p+1$ degrees of freedom and identity parameter matrix.

Finally, let us point out that the critical value only depends on the dimension $p$. The asymptotics that the number of factors $K$ tends to infinity such that $T-K \rightarrow \infty$ is thus covered as well if the dimension $p$ remains fixed.

\section{High-Dimensional Asymptotic Test}

In this section we derive the distribution of the test statistics $T_j$, $T_{el}$, $T_{ij}$, $T_{pr}$, and $T_{LR}$ in the case when both $p$ and $K$ tend to infinity as the sample size $T$ increases. This case is known in the statistical literature as the high-dimensional asymptotic regime. It is remarkable that in this case the results obtained under the standard asymptotic regime ($p$ is fixed) can deviate significantly from those obtained under high-dimensional asymptotics (see, e.g., \citet{BaiSilverstein2010}).

Several papers deal with the problem of estimating the covariance and the precision matrices from high-dimensional data. The results are usually obtained by applying the shrinkage technique (see, e.g., \citet{ledoit2003improved}, \citet{BodnarGuptaParolya2014b,BodnarGuptaParolya2014a,bodnar2016direct}) or by imposing some conditions on the structure of the covariance (precision) matrix (see, \citet{cai2011adaptive}, \citet{cai2011constrained}, \citet{agarwal2012noisy}, \citet{fan2008high}, \citet{fan2013large}). For instance, in \citet{agarwal2012noisy} an assumption is imposed that the covariance matrix can be presented as a sum of a sparse matrix and a low rank matrix. This structure of the covariance matrix is similar to the one obtained assuming a factor model (see, e.g., \citet{fan2013large} for discussion).

Although several tests on the covariance matrix under high-dimensional asymptotics have been suggested recently (see, e.g., \citet{johnstone2001distribution}, \citet{bai2009corrections}, \citet{chen2010tests}, \citet{cai2011limiting}, \citet{jiang2013central}, \citet{GuptaBodnar2014}), we are not aware of any test on the precision matrix in the literature. The latter problem is closely related to the test theory developed in this paper since the suggested tests can be presented as tests on the specific structure of the precision matrix. Their distributions under high-dimensional asymptotics are derived in this section.

Later on, we distinguish between two cases, $p/(T-K) \longrightarrow c \in (0,1)$ as $T-K \longrightarrow \infty$ and $p/(T-K) \longrightarrow 1_{-}$ such that $T-K-p \longrightarrow d \in (0,\infty)$ as $T-K \longrightarrow \infty$. The number of factors could be both asymptotically finite or infinite, but must remain smaller than the sample size $T$. Finally, it is also assumed that $p \le T-K$ to ensure the invertibility of $\bS$.

\subsection{Asymptotic Distributions of $T_{ij}$}

As the finite sample distribution of the test statistics $(T_{ij})_{1\le j <i \le p}$ depend on $p$, $K$, and $T$ through the difference $T-K-p$ only, we get the following result. \\[0.2cm]

\begin{theorem}\label{th3a}
Let $\bX_t$ follow model (\ref{fm}) where $\bff_t$ and $\bu_t$ are independent and normally distributed. Then:
\begin{enumerate}[(a)]
\item Under $H_{0_{ij}}$ we have
\begin{eqnarray}
T_{ij}\stackrel{d.}{\longrightarrow} \chi^2_1
\end{eqnarray}
for $p/(T-K) \rightarrow c \in (0,1)$ as $T-K \rightarrow \infty$, and
\begin{eqnarray}
T_{ij} \stackrel{d.}{\longrightarrow} \mathcal{F}_{1,d+1} ~~ \text{for} ~~ T-K-p \longrightarrow d \in (0,\infty) ~~ \text{as} ~~ T-K \longrightarrow \infty \,.
\end{eqnarray}
\item Under $H_{1,{ij}}$ it holds that
\begin{eqnarray}
\left(\sqrt{T_{ij}}-\sqrt{T-K-p+2}\sqrt{\lambda_{ij}}\right)^2 \stackrel{d.}{\longrightarrow} \chi^2_{1}
\end{eqnarray}
for $p/(T-K) \rightarrow c \in (0,1)$ as $T-K \rightarrow \infty$, and
\begin{eqnarray}
\left(\sqrt{T_{ij}}-\sqrt{T-K-p+1}\frac{\sqrt{v_{jj}}}{\sqrt{\omega_{jj}}}\sqrt{T-K-p+2}\sqrt{\lambda_{ij}}\right)^2 \stackrel{d.}{\longrightarrow} \mathcal{F}_{1,d+1}
\end{eqnarray}
for $ T-K-p \longrightarrow d \in (0,\infty)$ as $T-K \longrightarrow \infty$ where $\lambda_{ij}$ is given in the statement of Theorem \ref{th1a}.
\end{enumerate}
\end{theorem}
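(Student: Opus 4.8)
The plan is to exploit the explicit finite-sample density of $T_{ij}$ given in Theorem~\ref{th1a}, since everything here is a limit statement about that one-dimensional distribution as the parameter $m:=T-K-p\to\infty$ (with either $p/(T-K)\to c\in(0,1)$, which forces $m\to\infty$, or $m\to d\in(0,\infty)$). So the work is purely analytic: track how the density $F_{T_{ij}}$, or equivalently the distribution function, behaves under the stated normalizations. I would treat the null and alternative parts separately, and within each, the two regimes separately, but in each case the backbone is the same three ingredients: (i) the classical fact that $\mathcal{F}_{1,m}\xrightarrow{d}\chi^2_1$ as $m\to\infty$ (which handles part (a), first display, immediately, since under $H_{0,ij}$ Theorem~\ref{th1a}(b) gives $T_{ij}\sim\mathcal{F}_{1,m+1}$ exactly, and $m+1\to\infty$ when $c<1$); (ii) when $m\to d$ finite, $T_{ij}\sim\mathcal{F}_{1,d+1}$ is already the exact distribution, so part (a), second display, is trivial; (iii) for the alternative, a Taylor/Laplace-type expansion of the hypergeometric factor and the $(1+\lambda_{ij})^{-(m+2)/2}$ factor in the density.

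For part (b), first regime ($c<1$, so $m\to\infty$), the statement is that $\big(\sqrt{T_{ij}}-\sqrt{m+2}\,\sqrt{\lambda_{ij}}\big)^2\xrightarrow{d}\chi^2_1$. Here I expect $\lambda_{ij}$ to be fixed (it depends only on $d_{ij},\omega_{ii},\omega_{jj}$, not on dimensions), so $\sqrt{m+2}\sqrt{\lambda_{ij}}\to\infty$: the noncentrality is growing, and the claim is a local-CLT-type statement saying that $\sqrt{T_{ij}}$ is asymptotically $\mathcal{N}(\sqrt{m+2}\sqrt{\lambda_{ij}},\tfrac12)$ up to the squaring. The cleanest route is to recognize the density in Theorem~\ref{th1a}(a) as that of a noncentral-$\mathcal{F}$-type object: the combination of $F_{1,m+1}(x)$, the factor $(1+\lambda)^{-(m+2)/2}$, and $_2F_1\big(\tfrac{m+2}{2},\tfrac{m+2}{2},\tfrac12;\cdot\big)$ is, after a change of variables, the density of a scaled noncentral chi-square or noncentral $F$ with noncentrality parameter proportional to $m\lambda_{ij}$ — I would verify this by matching the series expansion of $_2F_1$ against the Poisson-mixture representation of the noncentral $\mathcal{F}$. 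Once $T_{ij}$ is identified (or approximated) as $\mathcal{F}_{1,m+1}(\delta_m)$ with $\delta_m\sim m\lambda_{ij}$, standard asymptotics for noncentral $F$ with diverging noncentrality and diverging denominator degrees of freedom give $\big(\sqrt{T_{ij}}-\sqrt{\delta_m}\big)\xrightarrow{d}\mathcal{N}(0,\tfrac12)$, hence the squared version is $\tfrac12\chi^2_1$; I would then reconcile the constant with the precise centering $\sqrt{m+2}\sqrt{\lambda_{ij}}$ the theorem uses (the discrepancy between $\sqrt{\delta_m}$ and $\sqrt{(m+2)\lambda_{ij}}$ is $o(1)$, which is all that is needed for convergence in distribution).

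For part (b), second regime ($m\to d$ finite), the denominator degrees of freedom stay bounded, so no CLT is available; instead the randomness of $v_{jj}/\omega_{jj}$ survives in the limit — which is exactly why the centering in the last display carries the extra factor $\sqrt{v_{jj}}/\sqrt{\omega_{jj}}$. Here I would go back to the joint distributional structure behind $T_{ij}$: condition on $v_{jj}$ (equivalently, write $T_{ij}$ in terms of a conditionally-$\mathcal{F}$ statistic whose noncentrality is $\lambda_{ij}$ times something like $v_{jj}/\omega_{jj}$ times $m$), take $m\to d$, and read off that the limit is $\mathcal{F}_{1,d+1}$ after recentering by the random quantity. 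The main obstacle, and the step I would spend the most care on, is precisely this last one: making rigorous the passage from the explicit but opaque hypergeometric density to a transparent conditional noncentral-$\mathcal{F}$ representation, and verifying that the particular (random) centering $\sqrt{m}\,\sqrt{v_{jj}/\omega_{jj}}\,\sqrt{m+2}\,\sqrt{\lambda_{ij}}$ is the right one — i.e.\ that after subtracting it, the $v_{jj}$-randomness cancels and a clean $\mathcal{F}_{1,d+1}$ remains. Once that representation is in hand, both displays in part (b) follow by continuous mapping plus the elementary $\mathcal{F}_{1,m}\to\mathcal{F}_{1,d+1}$ or $\to\chi^2_1$ limits already used in part (a).
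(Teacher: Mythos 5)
Your part (a) is correct and coincides with the paper's argument: under $H_{0,ij}$ Theorem \ref{th1a}(b) gives the exact $\mathcal{F}_{1,T-K-p+1}$ law, and the two limits are the elementary facts about $\mathcal{F}_{1,m}$ as $m\to\infty$ or $m\to d+1$. For part (b), however, your density-based route for the regime $p/(T-K)\to c\in(0,1)$ diverges from the paper and has a concrete problem. The paper does not touch the hypergeometric density at all; it writes the stochastic representation
\[
\sqrt{T_{ij}}=\sqrt{T-K-p+1}\,\frac{\sqrt{v_{jj}}}{\sqrt{\omega_{jj}}}\cdot\sqrt{\omega_{jj}}\,\frac{v_{ij}/v_{jj}}{\sqrt{v_{ii}-v_{ij}^2/v_{jj}}}\,,
\]
uses the conditional normality of $v_{ij}/v_{jj}$ given $v_{ii}-v_{ij}^2/v_{jj}$ (which makes the fluctuating part \emph{exactly} standard normal, unconditionally), and then sends the two $\chi^2$-ratios $\omega_{jj}/v_{jj}$ and $(\omega_{ii}-\omega_{ij}^2/\omega_{jj})/(v_{ii}-v_{ij}^2/v_{jj})$, suitably normalized, to $1$. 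Your plan to ``recognize'' the density of Theorem \ref{th1a}(a) as a noncentral $\mathcal{F}$ will fail at the verification step you yourself flag: a ${}_2F_1$ with two equal upper parameters is the signature of a noncentral $\mathcal{F}$ whose noncentrality is itself \emph{random} (in Lemma \ref{lem2} it is $\lambda_{ij}(\bC)$ with $\bC$ a scaled $\chi^2_{T-K-p+2}$), i.e., a negative-binomial-type mixture rather than the Poisson mixture of a genuine noncentral $\mathcal{F}$. That extra layer of randomness cannot simply be absorbed into ``$\delta_m\sim m\lambda_{ij}$'': it enters the square root multiplied by $\sqrt{\lambda_{ij}}\sqrt{m}$ and its fluctuations are $O_P(1)$ after centering, so it must be tracked explicitly (this is, incidentally, the same delicate point that the paper's own proof passes over very quickly when it replaces the $\chi^2$-ratios by their a.s.\ limits).

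Second, and independently of the mixture issue, your asserted limit $\sqrt{T_{ij}}-\sqrt{\delta_m}\stackrel{d.}{\to}\mathcal{N}(0,\tfrac12)$, hence ``$\tfrac12\chi^2_1$ for the square,'' is wrong and contradicts the very statement you are proving, which asserts a $\chi^2_1$ limit. For $X\sim\chi^2_{1,\delta}$ one has $X=(Z+\sqrt{\delta})^2$ with $Z\sim\mathcal{N}(0,1)$, so $\sqrt{X}-\sqrt{\delta}\to\mathcal{N}(0,1)$; the variance is $1$, not $\tfrac12$, and this is not something that can be ``reconciled'' by adjusting the centering. For the second regime of part (b) you abandon the density and revert to conditioning on $v_{jj}$ and on the conditional noncentral structure; that is essentially the paper's approach and is the right idea — the random factor $\sqrt{v_{jj}/\omega_{jj}}$ survives because the residual degrees of freedom stay bounded, which is exactly why it appears in the centering. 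In short: adopt the stochastic-representation argument (which you already use for the finite-$d$ case) uniformly for part (b); the density route, as proposed, does not go through.
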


\subsection{Asymptotic Distributions of $T_j$}
Let $\bv_{21,j}$ be the $j$th column of $\bV_{11}$ leaving out $v_{jj}$ and let $\bV_{22,j}$ be the $(p-1)\times(p-1)$ matrix obtained from $\bV_{11}$ by deleting its $j$th row and its $j$th column. We define $\bQ_{j}=\bV_{22,j}-\bv_{21,j}\bv_{21,j}^\top/v_{jj}$. Then using the results of Lemma \ref{lem2} from the appendix (see Section 9), we get with $\bL=\bI_p$ that
\[T_j=\frac{\omega_{11,j}\left(\frac{\bv^\top_{21,j}}{v_{jj}}\right)^\top \bQ_{j}^{-1}\left(\frac{\bv^\top_{21,j}}{v_{jj}}\right)/(p-1)}
{(\omega_{11,j}/v_{jj})/(T-K-p+1)} \stackrel{a.s.}{\longrightarrow} 1 \]
for $p/(T-K) \longrightarrow c \in (0,1)$ as $T-K \longrightarrow \infty$ under $H_{0,j}$ since if $\eta \sim \chi^2_{q,\lambda}$ then $\eta/q \stackrel{a.s.}{\longrightarrow} 1$ as
$q \longrightarrow \infty$.

In Theorem \ref{th3} we derive the weak limit under high-dimensional asymptotics of a transformation of $T_j$, $j=1,\ldots,p-1$, $p/(T-K) \longrightarrow c \in (0,1)$ as $T-K \longrightarrow \infty$ as well as the weak limit of $T_j$ for $T-K-p \longrightarrow d \in (0,\infty)$ as $T-K \longrightarrow \infty$.\\[0.5cm]

\begin{theorem}\label{th3}
Let $\bX_t$ follow model (\ref{fm}) where $\bff_t$ and $\bu_t$ are independent and normally distributed. Then:
\begin{enumerate}[(a)]
\item Under $H_{0,j}$ we have
\begin{eqnarray}
\sqrt{p-1}\left(T_j-1 \right)\stackrel{d.}{\longrightarrow} \mathcal{N}\left(0,\frac{2}{1-c}\right)
\end{eqnarray}
for $p/(T-K) \rightarrow c \in (0,1)$ as $T-K \rightarrow \infty$, and
\begin{eqnarray}
T_j \stackrel{d.}{\longrightarrow} \frac{d+1}{\chi^2_{d+1}} ~~ \text{for} ~~ T-K-p \longrightarrow d \in (0,\infty) ~~ \text{as} ~~ T-K \longrightarrow \infty \,.
\end{eqnarray}
\item Under $H_{1,j}$ it holds that
\begin{eqnarray}
\sqrt{p-1}\left(\frac{\omega_{11,j}\left(\frac{\bv^\top_{21,j}}{v_{jj}}\right)^\top \bQ_{j}^{-1}\left(\frac{\bv^\top_{21,j}}{v_{jj}}\right)/(p-1)-\frac{\lambda_j}{c}}
{(\omega_{11,j}/v_{jj})/(T-K-p+1)}-1 \right)\stackrel{d.}{\longrightarrow} \mathcal{N}\left(0,\frac{2}{1-c}+4\frac{\lambda_j}{c}\right)
\end{eqnarray}
for $p/(T-K) \rightarrow c \in (0,1)$ as $T-K \rightarrow \infty$, and
\begin{eqnarray}
T_j \stackrel{d.}{\longrightarrow} \frac{\left(1+\frac{\lambda_j}{c}\right)(d+1)}{\chi^2_{d+1}} ~~ \text{for} ~~ T-K-p \longrightarrow d \in (0,\infty) ~~ \text{as} ~~ T-K \longrightarrow \infty
\end{eqnarray}
where $\lambda_j$ is given in the statement of Theorem \ref{th1}.
\end{enumerate}
\end{theorem}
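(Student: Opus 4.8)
The plan is to reduce $T_j$ to an explicit ratio of independent (non-central) chi-squared variables and then to read off all four limits from that single representation by means of the chi-squared central limit theorem and Slutsky's theorem. Fix $j$, move the $j$th coordinate of $\bV_{11}$ into first position, and combine the Schur-complement identity $v_{jj}v_{jj}^{(-)}-1=v_{jj}^{-1}\,\bv_{21,j}^\top\bQ_j^{-1}\bv_{21,j}$ with $\bV_{11}^{-1}\sim\mathcal W_p(T-K,\bOmega_{11}^{-1})$ (which follows from $\bV_{11}\sim\mathcal W_p^{-1}(T-K+p+1,\bOmega_{11})$, already used in Section~4) and the Wishart block decomposition (Lemma~\ref{lem2}). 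This yields the stochastic representation
\[
T_j\;\stackrel{d}{=}\;\frac{T-K-p+1}{p-1}\cdot\frac{\chi^2_{p-1,\Delta}}{\chi^2_{T-K-p+1}},
\]
where conditionally on $\Delta$ the numerator is a non-central $\chi^2_{p-1}$ with non-centrality $\Delta$, the denominator is an independent central $\chi^2_{T-K-p+1}$, and --- the point for part~(b) --- the non-centrality is itself random, $\Delta\stackrel{d}{=}\lambda_j\,\chi^2_{T-K}$, independent of the denominator; under $H_{0,j}$ one has $\Delta=0$ and recovers $T_j\sim\mathcal F_{p-1,T-K-p+1}$ of Theorem~\ref{th1}. (Equivalently, this is the $F$-statistic for the sample squared multiple correlation coefficient of one coordinate on the remaining $p-1$, whose population value satisfies $\rho_j^2/(1-\rho_j^2)=\lambda_j$; the random $\chi^2_{T-K}$ non-centrality produces the factor $(1+\lambda_j)^{-(T-K)/2}$ in Theorem~\ref{th1}(a).) In the notation of part~(b), $\chi^2_{p-1,\Delta}/(p-1)$ is the quantity $\omega_{11,j}\,v_{jj}^{-2}\,\bv_{21,j}^\top\bQ_j^{-1}\bv_{21,j}/(p-1)$ and $\chi^2_{T-K-p+1}/(T-K-p+1)$ is $(\omega_{11,j}/v_{jj})/(T-K-p+1)$, so that $T_j=A_p/B_p$ with $A_p,B_p$ the two ratios appearing there.

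For $p/(T-K)\to c\in(0,1)$ one has $p-1\to\infty$, $T-K-p+1\to\infty$, $(p-1)/(T-K-p+1)\to c/(1-c)$ and $B_p\stackrel{P}{\longrightarrow}1$. Under $H_{0,j}$ (so $\Delta=0$) the chi-squared CLT gives $\sqrt{p-1}(A_p-1)\stackrel{d.}{\longrightarrow}\mathcal N(0,2)$ and, with the stated scaling, $\sqrt{p-1}(B_p-1)\stackrel{d.}{\longrightarrow}\mathcal N(0,2c/(1-c))$, independently, and the delta method applied to $T_j=A_p/B_p$ yields $\sqrt{p-1}(T_j-1)\stackrel{d.}{\longrightarrow}\mathcal N(0,2+2c/(1-c))=\mathcal N(0,2/(1-c))$. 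Under $H_{1,j}$ I would in addition use the decomposition $\chi^2_{p-1,\Delta}\stackrel{d}{=}\chi^2_{p-2}+(\xi+\sqrt\Delta)^2$ with $\chi^2_{p-2}$, $\xi\sim\mathcal N(0,1)$ and $\Delta$ mutually independent, together with $\Delta/(p-1)=\lambda_j\,\chi^2_{T-K}/(p-1)\stackrel{a.s.}{\longrightarrow}\lambda_j/c$. Expanding $A_p-\lambda_j/c$ and retaining the terms of order $(p-1)^{-1/2}$ --- the central block $(\chi^2_{p-2}-(p-2))/(p-1)$, the Gaussian cross term $2\sqrt{\Delta/(p-1)}\,\xi/\sqrt{p-1}$, the $\xi^2$-term being of lower order --- together with the independent denominator fluctuation, and dividing by $B_p\to1$, a variance computation gives
\[
\sqrt{p-1}\left(\frac{A_p-\lambda_j/c}{B_p}-1\right)\stackrel{d.}{\longrightarrow}\mathcal N\!\left(0,\frac{2}{1-c}+4\frac{\lambda_j}{c}\right).
\]

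For $T-K-p\to d\in(0,\infty)$ one has $p/(T-K)\to1$ and, since all quantities are integers, $T-K-p+1=d+1$ for all large indices, so the denominator of the representation is a fixed $\chi^2_{d+1}$, independent of the numerator. By the law of large numbers $A_p=\chi^2_{p-1,\Delta}/(p-1)\stackrel{P}{\longrightarrow}1$ under $H_{0,j}$ and $A_p\stackrel{P}{\longrightarrow}1+\lambda_j$ under $H_{1,j}$ (using $\Delta/(p-1)\to\lambda_j$ and that the cross and $\xi^2$ terms vanish; here $c=1$, so $1+\lambda_j=1+\lambda_j/c$). Slutsky's theorem applied to $T_j=(d+1)A_p/\chi^2_{d+1}$ then gives $T_j\stackrel{d.}{\longrightarrow}(d+1)/\chi^2_{d+1}$ under $H_{0,j}$ and $T_j\stackrel{d.}{\longrightarrow}(1+\lambda_j/c)(d+1)/\chi^2_{d+1}$ under $H_{1,j}$.

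The delicate part is the regime $c\in(0,1)$ under $H_{1,j}$: the representation must be set up carefully enough to reveal that the numerator is non-central with a \emph{random} non-centrality $\Delta\stackrel{d}{=}\lambda_j\chi^2_{T-K}$ of exact order $p-1$ rather than a deterministic constant, and one then has to identify precisely which fluctuations of $A_p$ and $B_p$ persist at the scale $\sqrt{p-1}$ and verify that they are asymptotically independent, so that the variances add up to the stated limit. Once the representation of the first step is established, the remaining three statements reduce to routine chi-squared CLT and Slutsky arguments.
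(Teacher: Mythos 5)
Your proposal follows essentially the same route as the paper: part (a) is the same ratio-of-independent-chi-squares argument built on Lemma \ref{lem2} with the law of large numbers, the CLT and Slutsky's theorem, and part (b) hinges, exactly as in the paper, on recognizing that the noncentrality of the numerator is random and equal in distribution to $\lambda_j\chi^2_{T-K}$ (hence of exact order $p-1$), so that $\lambda_j(\bQ_j^{-1})/(p-1)\stackrel{a.s.}{\longrightarrow}\lambda_j/c$. The only cosmetic difference is that you derive the CLT for the noncentral $\chi^2_{p-1,\Delta}$ from the representation $\chi^2_{p-2}+(\xi+\sqrt{\Delta})^2$, whereas the paper proves a separate Lemma \ref{lem3} via the Lyapunov CLT applied to $\sum_i Y_i^2$ with $Y_i\sim\mathcal N(\mu_i,1)$; both devices yield the same limit, and both arguments (yours and the paper's) treat the $O_P((p-1)^{-1/2})$ fluctuation of $\Delta/(p-1)$ about $\lambda_j/c$ as negligible in the final variance computation.
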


\vspace{0.8cm}
The marginal test based on the statistic $\sqrt{p-1}\left(T_j-1 \right)$ rejects the null hypothesis $H_{0,j}$ if the value of the test statistic multiplied by $\sqrt{1-c}/\sqrt{2}$ is larger than $z_{1-\alpha}$ ($(1-\alpha)$-quantile of the standard normal distribution). Using that $T_j \sim \mathcal{F}_{p-1,T-K-p+1}$ (see Theorem \ref{th1}), a finite sample correction of the statistic $\sqrt{p-1}\left(T_j-1 \right)$ can be suggested. Since the expectation and the variance of a $\mathcal{F}_{p-1,T-K-p+1}$-random variable are given by
\[\mu_F=\frac{T-K-p+1}{T-K-p-1} \quad \text{and} \quad var_F=\frac{2(T-K-2)(T-K-p+1)^2}{(T-K-p-3)(T-K-p-1)^2}\,,\]
we get the following finite sample adjusted version:
\begin{equation*}
\sqrt{p-1}\frac{T_j-\mu_F}{\sqrt{var_F}}\stackrel{d.}{\longrightarrow} \mathcal{N}\left(0,1\right) \quad \text{under} \quad H_{0,j}\,.
\end{equation*}
Of course, under the high-dimensional asymptotics, we get $\mu_F \longrightarrow 1$ and $var_F\longrightarrow 2/(1-c)$ for $p/(T-K) \rightarrow c \in (0,1)$ as $T-K \rightarrow \infty$.

\subsection{Likelihood Ratio Test under High-Dimensional Asymptotics}

In this subsection we extend the results of Section 4 by deriving the asymptotic distribution of the likelihood ratio test statistics under the high-dimensional asymptotic regime. The results are obtained in case of $p/(T-K) \longrightarrow c \in (0,1)$ as $T-K \longrightarrow \infty$ as well as in case of $p/(T-K) \longrightarrow 1_{-}$ such that $T-K-p \longrightarrow d \in (0,\infty)$ as $T-K \longrightarrow \infty$.

First, we note that the statistic $T_{LR}^*$ can be further rewritten. Let $\bR$ be the correlation matrix calculated from the Wishart distributed matrix $\bV_{11}^{-1}$, that is
\[\bR={\rm diag}\{(v_{11}^{(-)})^{-1/2},\ldots,(v_{pp}^{(-)})^{-1/2}\}\bV_{11}^{-1}
{\rm diag}\{(v_{ii}^{(-)})^{-1/2},\ldots,(v_{pp}^{(-)})^{-1/2}\}\,.\]
Then the test statistic $T_{LR}^*$ can be presented by
\begin{equation}
T_{LR}^*={\rm det}(\bR)^{-\frac{T}{2}}\,.
\end{equation}

The asymptotic distribution in case of the likelihood ratio test under $H_0$ in \eqref{test_fm} is given in Theorem \ref{th4}.\\[0.2cm]

\begin{theorem}\label{th4}
Let $\bX_t$ follow model (\ref{fm}) where $\bff_t$ and $\bu_t$ are independent and normally distributed. Then under $H_0$ in \eqref{test_fm} for $p/(T-K) \rightarrow c \in (0,1)$ as $T-K \rightarrow \infty$ as well as for $T-K-p \longrightarrow d \in (0,\infty)$ as $T-K \longrightarrow \infty$ with $d\ge 4$, we get
\begin{eqnarray}
\frac{\dfrac{2}{T}\ln(T_{LR}^*)+\mu_{LR}}{\sigma_{LR}}\stackrel{d.}{\longrightarrow} \mathcal{N}\left(0,1\right)\,,
\end{eqnarray}
where
\begin{equation}\label{mu_LR}
\mu_{LR}=\left(p-1-(T-K)+\frac{3}{2}\right)\ln\left(1-\frac{p}{T-K}\right)-\frac{T-K-1}{T-K}p
\end{equation}
and
\begin{equation}\label{sigma_LR}
\sigma_{LR}=-2\left\{\frac{p}{T-K}+\ln\left(1-\frac{p}{T-K}\right)\right\}\,.
\end{equation}
\end{theorem}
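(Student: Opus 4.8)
The plan is to exploit the representation $\tfrac{2}{T}\ln(T_{LR}^*)=-\ln\det(\bR)$ together with the fact, established in Section 4, that $\bV_{11}^{-1}\sim\mathcal{W}_p(T-K,\bOmega_{11}^{-1})$ and that under $H_0$ the correlation matrix $\bR$ has the same distribution as the sample correlation matrix computed from a Wishart matrix with identity parameter. Since the distribution of $\det(\bR)$ does not depend on the diagonal scaling, we may take $\bOmega_{11}^{-1}=\bI_p$. The key is then the classical moment/Bartlett decomposition: $\det(\bR)$ is distributed as $\prod_{i=1}^p Z_i$, where the $Z_i$ are independent and $Z_i\sim\mathrm{Beta}\!\left(\tfrac{T-K-i+1}{2},\tfrac{i-1}{2}\right)$ (see Section 11.2 in \citet{Muirhead}, or equivalently Wilks' factorization of the likelihood ratio for independence). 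Hence
\[
-\ln\det(\bR)=\sum_{i=1}^p\bigl(-\ln Z_i\bigr),
\]
a sum of independent (non-identically distributed) random variables, and the task reduces to a Lyapunov central limit theorem for triangular arrays.

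The concrete steps I would carry out are: (i) write $-\ln Z_i=-\ln\det(\bR)$ as the independent sum above; (ii) compute $\E[-\ln Z_i]$ and $\Var[-\ln Z_i]$ using $\E[\ln Z]=\psi(a)-\psi(a+b)$ and $\Var[\ln Z]=\psi'(a)-\psi'(a+b)$ for $Z\sim\mathrm{Beta}(a,b)$, with $a=\tfrac{T-K-i+1}{2}$, $b=\tfrac{i-1}{2}$; (iii) sum over $i=1,\dots,p$ and apply the asymptotic expansion $\psi(x)=\ln x-\tfrac{1}{2x}+O(x^{-2})$, $\psi'(x)=\tfrac1x+\tfrac{1}{2x^2}+O(x^{-3})$, approximating the resulting sums by integrals of the form $\int_0^{p}\ln\!\bigl(1-\tfrac{s}{T-K}\bigr)\,ds$, which produces exactly the closed forms $\mu_{LR}$ in \eqref{mu_LR} and $\sigma_{LR}^2$ in \eqref{sigma_LR} after collecting the $\tfrac32\ln(1-p/(T-K))$ and $\tfrac{T-K-1}{T-K}p$ correction terms; (iv) verify the Lyapunov condition, e.g. bound $\E|{-\ln Z_i}-\E(-\ln Z_i)|^{2+\delta}$ uniformly and show $\sum_i(\cdot)/\sigma_{LR}^{2+\delta}\to0$, using that $\sigma_{LR}^2\to\infty$ when $c\in(0,1)$; then conclude by the CLT.

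For the boundary regime $p/(T-K)\to 1_-$ with $T-K-p\to d\ge4$ the same beta-product representation applies, but now only the last few factors $Z_i$ (those with $i$ near $p$) have a non-degenerate contribution while $\sigma_{LR}^2$ stays bounded; here I would instead argue directly that $-\ln\det(\bR)$ minus its mean, renormalized, still converges to a normal limit by a direct characteristic-function or Lindeberg argument, the condition $d\ge4$ guaranteeing that the relevant second moments of $-\ln Z_i$ for the smallest beta-shape parameters remain finite and that the Feller–Lindeberg negligibility of individual summands holds. The main obstacle is step (iii)–(iv): making the integral approximations of the digamma sums precise enough to land on the exact expressions $\mu_{LR}$ and $\sigma_{LR}$ (the subleading $\tfrac32$ and $\tfrac{T-K-1}{T-K}$ terms are delicate and require carrying the $O(x^{-2})$ terms in the $\psi$-expansion), and simultaneously controlling the Lyapunov remainder uniformly over $i$ up to $i=p$, where the beta shape parameter $\tfrac{T-K-i+1}{2}$ can be as small as $O(1)$ in the boundary case. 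This is where the hypothesis $d\ge 4$ and the separate treatment of the two regimes become essential.
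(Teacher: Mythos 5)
Your strategy is sound, but note that the paper does not actually carry out this analysis: its entire proof is the identity $\tfrac{2}{T}\ln(T_{LR}^*)=-\ln\det(\bR)$ followed by an appeal to Corollary 1 of \citet{jiang2013central}, which asserts exactly the CLT for $\ln\det(\bR)$ with the constants \eqref{mu_LR} and \eqref{sigma_LR}. What you propose is essentially a self-contained reconstruction of that cited result, and it is the standard route: the reduction to $\bOmega_{11}^{-1}=\bI_p$, the factorization $\det(\bR)\stackrel{d}{=}\prod_{i=2}^{p}Z_i$ with independent $Z_i\sim\mathrm{Beta}\bigl(\tfrac{T-K-i+1}{2},\tfrac{i-1}{2}\bigr)$ (the product must start at $i=2$; your $i=1$ factor is degenerate at $1$), digamma/trigamma moment computations, and a Lindeberg--Lyapunov CLT are precisely the ingredients of the proof of the theorem being cited. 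The paper buys brevity by outsourcing the hard analysis; your route buys transparency, but the steps you yourself flag as delicate --- the uniform asymptotic expansion that lands on the exact subleading terms $\tfrac32\ln(1-p/(T-K))$ and $\tfrac{T-K-1}{T-K}p$, and the moment control near $i=p$ --- constitute essentially the entire content of the cited theorem and remain unexecuted in your sketch.

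Two concrete corrections. First, your description of the boundary regime is wrong: with $n=T-K$ and $n-p\to d$, one has $\sigma_{LR}=-2\{p/n+\ln(1-p/n)\}\approx 2\ln(n/d)-2\to\infty$, so this quantity does not stay bounded; moreover the total variance $\sum_{i}\{\psi'(\tfrac{n-i+1}{2})-\psi'(\tfrac{n}{2})\}$ is a harmonic-type sum spread over all indices rather than being carried by the last few factors. It is exactly this logarithmic divergence, together with the uniform bound $\Var(-\ln Z_i)\le\psi'(\tfrac{d+1}{2})=O(1)$ on each summand, that makes the Lindeberg condition hold there; the hypothesis $d\ge4$ matches the condition $n>p+4$ in \citet{jiang2013central} ensuring uniform control of the higher moments of $-\ln Z_i$ for the smallest beta shape parameters. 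Second, your computation in step (iii) will identify $-2\{p/n+\ln(1-p/n)\}$ as the asymptotic \emph{variance} of $-\ln\det(\bR)$, so the correct normalization is by its square root; the theorem as stated divides by $\sigma_{LR}$ itself, and you should make explicit which convention your derivation is matching.
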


\vspace{0.8cm}
The proof of the theorem follows directly from Corollary 1 of \citet{jiang2013central}, where it is shown that
\[\frac{\ln({\rm det}\bR)-\mu_{LR}}{\sigma_{LR}}\stackrel{d.}{\longrightarrow} \mathcal{N}\left(0,1\right)\,,\]
where $\mu_{LR}$ and $\sigma_{LR}$ are given in \eqref{mu_LR} and \eqref{sigma_LR}, respectively.

\section{Finite-Sample Performance}

In this section we investigate the power of the three tests suggested in the previous sections. The analysis is performed for both, small (Section 6.1) and large (Section 6.2) values of $p$.

The critical values of each test are obtained via simulations or by using a Bonferroni correction in case of $T_{el}$ and $T_{pr}$ as well as the asymptotic distribution for $T_{LR}$. Consequently, in all plots six lines are shown. The lines denoted by $T_{el}$, $T_{pr}$, and $T_{LR}$ correspond to the case of calibrated critical values of the tests, whereas the notations $T_{el-B}$, $T_{pr-B}$, and $T_{LR-as}$ mean that a Bonferroni correction or the asymptotic distribution was used. The critical values, which are based on simulations, are obtained by generating a sample of $10^5$ realizations from the inverse Wishart distribution with $T-K+p+1$ degrees of freedom  and identity parameter matrix. Based on this sample, the sample quantiles of the corresponding test statistics are calculated and used as critical values.

The situation is more complex if the aim is to access the power of the suggested tests, since the powers depend on the model specified under the alternative hypothesis. In order to investigate the powers of the tests, we simulate data following (\ref{fm}). Namely, the vector of factors and the residual vector are generated independently from each other as well as independently in each repetition from $\mathcal{N}_K(\bzero,\bI_k)$ in case of $\bff_t$ and from $\mathcal{N}_p(\bzero,\bSigma_u)$, $\bSigma_u={\rm diag}(\eta_1,\ldots,\eta_p)\mathbf{\Delta} {\rm diag}(\eta_1,\ldots,\eta_p)$ with $\mathbf{\Delta}=(\rho_{ij})_{1\le j<i \le p}$ and $\rho_{ii}=1$, $i=1,\ldots,p$, in case of $\bu_t$. Here, $\eta_1,\ldots,\eta_p$ determine the standard deviations of $u_1,\ldots,u_p$, respectively, whereas $\mathbf{\Delta}$ stands for the correlation matrix. In order to get reliable results which do not depend on one model only, we take different parameters for $\bB=(b_{ij})_{i=1,\ldots,p;j=1,\ldots,K}$ and $\eta_i$, $i=1,\ldots,p$ in each repetitions. Namely, we specify all these quantities randomly following $\eta_i\sim UNI[1,2]$ and $b_{ij}\sim UNI[-1,1]$. The correlation matrix $\mathbf{\Delta}$ has been chosen in three possible ways in order to account for the behaviour of the tests under different deviations from $H_0$. We further increase the number of factors in model (\ref{fm}) and perform the test assuming that a lower number of factors is present.

We present four scenarios for generating data in detail which are used in the investigation of the test powers.

\begin{itemize}
\item \textbf{Scenario 1:} Change in one correlation coefficient.\\
Here, it is assumed that $\rho_{12}=\rho_{21}=\rho$ with $\rho \in \{-0.5,-0.45,\ldots,0,\ldots,0.45,0.5\}$. The remaining correlations are set to zero.

\item \textbf{Scenario 2:} Change in one column.\\
Let $\mathbf{\Delta}^{-1}=(\rho_{ij}^{(-)})$ with
\begin{equation*}
\rho_{1j}^{(-)}=\rho_{j1}^{(-)}=\left\{
\begin{array}{ccc}
\frac{sign(\rho^{j-1})|\rho|}{\sqrt{1+3(p-1)\rho^2/2}} & \text{for} & j>1, \\
1 & \text{for} & j=1 \\
\end{array}
\right. \,.
\end{equation*}
The remaining correlation coefficients are zero.

\item \textbf{Scenario 3:} All correlation coefficients are changed.\\
Here, we put $\rho_{ij}=\rho^{|j-i|}$ for $i,j \in \{1,\ldots,p\}$.

\item \textbf{Scenario 4:} Change in the number of factors.\\
The number of factors in the true model is increased to $K+\tilde{K}$ with $\tilde{K}\in\{1,\ldots,10\}$.
\end{itemize}

These four scenarios lead to different types of factor models under the alternative hypothesis. For instance, in case of Scenario 1, a single change in the correlation matrix of residuals is assumed, whereas Scenario 2 leads to changes in the first column (row) of $\bOmega_{11}$. Scenario 3 corresponds to changes in all elements of $\bOmega_{11}$ although their magnitude becomes smaller as the difference between the row number and the column number increases. Here, the structure of $\mathbf{\Delta}$ corresponds to the structure of the correlation matrix of an AR(1)-process. Finally, Scenario 4 assumes that the true factor model consists of $K+\tilde{K}$ factors, whereas the factor model with $K$ factors is fitted.

For different scenarios, we expect different performances of the suggested three tests with respect to their powers. For the first scenario, the $T_{el}$ test is expected to be the best one, whereas the $T_{pr}$ test should outperform the competitors in case of Scenario 2. Finally, when changes in the entire correlation matrix are present, the likelihood ratio test ($T_{LR}$) should possess the best performance. Furthermore, the application of $T_{el}$ provides more information to the practitioners than in the case of $T_{pr}$ and $T_{LR}$. Only the conclusion about the validity of the factor model can be drawn when the test $T_{LR}$ is used, whereas the test $T_{pr}$ can indicate the columns in the precision matrix $\bol{\Omega}$ which are responsible for the rejection of the null hypothesis. In contrast, the testing procedure $T_{el}$ determines the pairs of variables for which the null hypothesis is rejected.

\subsection{Results for Small Dimension}

In this subsection, we present the results of our simulation study under the assumption that $p$ is much smaller than $T-K$ and/or all quantities $p$, $T$, and $K$ are finite. Different values of $K=5$, $p \in \{10, 20\}$, and $T \in \{30,60,100\}$ are considered. Moreover, we put $\rho \in \{-0.5,-0.45,\ldots,0,\ldots,0.45,0.5\}$ and $\tilde{K}\in\{1,\ldots,10\}$, as described above. Finally, the nominal size of the tests is set to $\alpha=0.05$.

The resulting powers are shown in Figures \ref{Fig:T30K5p10}-\ref{Fig:T100K5p20}. In Figures \ref{Fig:T30K5p10} and \ref{Fig:T60K5p20}, we present the results for small sample size, whereas Figures \ref{Fig:T100K5p10} and \ref{Fig:T100K5p20} correspond to large $T-K$ with respect to $p$. It is not surprising that if $T-K$ is relatively small with respect to $p$, then the $T_{LR}$ test based on the asymptotic distribution shows the probability of type 1 error larger than the nominal value of $5\%$. Consequently, a finite sample adjustment for this test is required. This is achieved by calibrating the critical values of this test following the results of Section 4.

\begin{center} Figures 1 and 4 above here \end{center}

The figures with the exception of Figure \ref{Fig:T60K5p20} confirm our expectation. In case of Scenario 1, the best approach is the $T_{el}$ test followed by the $T_{pr}$ test, whereas for the rest of the considered scenarios this test shows the worst performance in almost all of the considered cases. For Scenario 2, the best approach is based on the application of the $T_{pr}$ statistic, while in both, Scenario 3 and Scenario 4, the likelihood ratio test outperforms the competitors.

We also observe that the lines which correspond to the Bonferroni correction or which are obtained from the asymptotic distribution almost coincide with the corresponding lines obtained by calibrating the critical values under the null hypothesis if $T=100$. This indicates that under $H_0$ the event that two marginal test statistics are simultaneously beyond the critical value is negligible. In contrast, for smaller sample sizes ($T=30$ and $T=60$) this statement does not hold, especially for the $T_{LR}$-asymptotic test.

\subsection{Results for Large Dimension}

In this subsection we deal with the case of high-dimensional factor models. Two possible sets of values for $p$, $K$, and $T$ are considered, namely $\{p=100,K=10,T=500\}$ with $c \approx 0.2$ and $\{p=100,K=20,T=250\}$ with $c \approx 0.36$. The nominal size of the tests is set to $\alpha=0.05$. Similarly to the previous subsection, six lines are plotted in each figure. Three of them correspond to the tests based on the calibrated critical values, whereas for the other three lines the asymptotic results of Theorems \ref{th3a} and \ref{th4} together with the Bonferroni correction are used.

\begin{center} Figures 5 and 6 above here \end{center}

The results of Figures \ref{Fig:T500K10p100} and \ref{Fig:T300K20p100} are even more pronounced than the ones in case of small $p$. Namely, for Scenario 1, the best test is based on the $T_{el}$ statistic, clearly outperforming the rest of competitors. Here, a very poor performance of the likelihood ratio test is observed which is to be expected because the dimension of $\bOmega$ becomes
large and a change in a single entry has only a minor impact on the determinant. On the other side, the $T_{el}$ test possesses very small power for the rest of the considered scenarios. The test based on the $T_{pr}$ statistic is the best one in case of Scenario 2 and shows the same performance as the $T_{LR}$ approach for Scenario 4. Finally, in case of Scenario 3, the likelihood ratio test clearly outperforms the other approaches.

\begin{center} Figure 7 above here \end{center}

It is also noted that the Bonferroni correction does not work well in case of the $T_{pr}$ test. This is explained by the problem of approximating the $\mathcal{F}$-distribution with both degrees of freedom large by the normal distribution under high-dimensional asymptotics (see Figure \ref{Fig:Fdist}). Although the histograms for $p \in  \{100,1000,10000,100000\}$ look like the ones which correspond to the normal distribution, they are slightly moved to the left and do provide a good approximation only if $p\ge 10000$. Since the maximum of dependent $F$-statistics is taken in the definition of $T_{pr}$ under $H_0$, this effect becomes even more pronounced. It is documented in Figures \ref{Fig:T500K10p100} and \ref{Fig:T300K20p100} by red lines which significantly deviate from the corresponding black lines obtained for calibrated $p$-values. As a result, it is recommendable to apply the results of Section 5 only in case of very high-dimensional factor models. If $p$ is smaller than $1000$, then it is better to construct Bonferroni corrections based on the exact $\mathcal{F}$-distributions given in Theorems \ref{th1a} and \ref{th1} instead of the asymptotic ones from Theorems \ref{th3a} and \ref{th3}.

\section{Empirical Illustration}

In this section, we apply the theoretical results of the paper to test if the market indices can be used as factors in describing the dynamics of the asset returns. This idea corresponds to the capital asset pricing model (CAPM) and the arbitrage pricing theory (APT) which are widely used in portfolio analysis.

\citet{BaiFangLiang2014spectral} point out that Markowitz's portfolio selection theory (see, \citet{Markowitz,markowitz1991foundations}) has already set up the foundation for the CAPM. These ideas are further extended by \citet{sharpe1964capital} and \citet{lintner1965security} in case of the presence of a risk-free asset, whereas \citet{black1972capital} generalizes the CAPM to the case when a risk-free asset is not available by deriving the so-called zero-beta CAPM. As a proxy for the returns of the market portfolio, which plays a role of the factor in the CAPM, the returns of the market indices, like the DAX index or the S\&P 500 index, are usually used.

The APT is an extension of the CAPM model which was suggested by \citet{merton1973intertemporal} and \citet{ross1976arbitrage}. In contrast to the CAPM, which is based on a single factor only, several factors are used in the APT in order to fit the dynamics in the asset returns. These factors are usually presented by other market  or industry-sector indices, like the TecDAX index or the NASDAQ bank index. One of the main ideas behind the APT is that it is commonly not enough to model the asset returns by a single factor and, thus, further factors have to be included into the model. Finally, \citet{chamberlain1983arbitrage} suggest a high-dimensional factor model for capturing the dynamics in the asset returns (see, \citet{fan2013large}).

Estimation and testing the CAPM (APT) is an important topic in finance today (see, \citet{shanken1992estimation,shanken1986testing,shanken1992estimation}, \citet{velu1999testing}, \citet{shanken2007estimating}, \citet{sentana2009econometrics}, \citet{beaulieu2012identification}, \citet{reiss2014nonparametric}). Recently, \citet{sentana2009econometrics} provides a survey of mean-variance efficiency tests which play a special role in the CAPM and have increased their popularity after the seminal paper of \citet{gibbons1989test}. \citet{beaulieu2012identification} suggest exact simulation-based procedures for testing the zero-beta CAPM and constructing confidence intervals for the zero-beta rate.

We apply the theoretical results of the paper to test the validity of a factor model with specified factors in case of the returns on stocks included into the German DAX index (Section 7.1) as well as in case of the returns on stocks included into the USA S\&P index (Section 7.2). The first empirical study corresponds to a factor model with $p=20$, whereas the second one to the high-dimensional model with $p=100$.

\subsection{Analysis of Stocks Included into the DAX Index}
We perform the $T_{el}$, $T_{pr}$, and $T_{LR}$ tests on the validity of factor models fitted to the returns of $20$ stocks included into the DAX index. These $20$ stocks are chosen randomly out of all $30$ stocks which determine the value of the DAX index. Repeating this procedure $10^4$ times, $10^4$ models are fitted and tests on the validity of each model are performed. As factors, we use the returns of the DAX index in the first approach. In the second approach, we included three further factors, namely, the STOXX50E index, the TecDAX index, and the MDAX index. In all cases, weekly returns are considered from the 11th of June 2012 to the 10th of June 2014 ($T=104$ observations) obtained from the Yahoo! finance web-page.$^1$ \footnote{$^1$ It has to be noted that the distribution of monthly returns is closer to the normal distribution compared to the shorter term returns. However, the application of monthly data over longer periods of time may lead to biased results due to non-constant parameters. In contrast, the daily data cause problems with the assumption of normality. For this reason we opt for the weekly frequency, which is a trade-off between the two extremes.}

\begin{table}[ptbh]
\begin{center}
\begin{tabular}{ccccc}
\hline \hline
\multicolumn{5}{c}{$K=1$}\\
\hline
Test$\setminus \alpha$ & 0.1& 0.05 & 0.01 & 0.005  \\
\hline
$T_{el}$&12.7205 &14.2748 &18.0389 &19.8171 \\
$T_{pr}$&2.2581 &2.4474 &2.8415 &2.9739 \\
$T_{LR}$&215.7571 &223.4439 &239.7306 &245.2959 \\
\hline
\multicolumn{5}{c}{$K=4$}\\
\hline
Test$\setminus \alpha$ & 0.1& 0.05 & 0.01 & 0.005  \\
\hline
$T_{el}$&12.9347 &14.3680 &17.8800 &19.4064  \\
$T_{pr}$&2.2821 &2.4524 &2.8275 &3.0234 \\
$T_{LR}$&216.2087 &223.3710 &236.5821 &242.6313  \\
\hline \hline
\end{tabular}
\end{center}
\caption{Critical values of the $T_{el}$, $T_{pr}$, and $T_{LR}$ tests for $\alpha \in \{0.1,0.05,0.01,0.05\}$. We put $p=20$, $T=104$, and $K\in \{1,4\}$.}%
\label{tab1}%
\end{table}

Using $p=20$, $T=104$ as well as $K=1$ for one-factor models and $K=4$ for four-factors models, the critical values of the considered test are calibrated by generating a sample of size $10^5$ from the inverse Wishart distribution with $T-K+p+1$ degrees of freedom and the identity parameter matrix. These critical values are shown in Table \ref{tab1}. The resulting samples of test statistics are used in the determination of the empirical distribution functions of the test statistics which are then applied to the calculation of the $p$-values. The most important quantiles of the obtained $p$-values, namely the minimum and the maximum values, the lower and the upper quartiles as well as the median, are shown in Table \ref{tab2}. Here, we observe that most of the calculated $p$-values are equal to zero which shows that the null hypothesis of the validity of a factor model with the selected factors is rejected in most cases for both, $K=1$ and $K=4$. Only the $T_{el}$ test fails to reject the null hypothesis in a few cases, which is in-line with the results of the previous section where it is shown that this test is less powerful in many cases.

\begin{table}[ptbh]
\begin{center}
\begin{tabular}{cccccc}
\hline \hline
\multicolumn{6}{c}{$K=1$}\\
\hline
Test$\setminus$ Quantile & Minimum &Lower Quartile& Median & Upper Quartile & Maximum  \\
\hline
$T_{el}$&0.0000 &0.0000 &0.0000 &0.0002 &0.7296  \\
$T_{pr}$&0.0000 &0.0000 &0.0000 &0.0000 &0.0270  \\
$T_{LR}$&0.0000 &0.0000 &0.0000 &0.0000 &0.0000 \\
\hline
\multicolumn{6}{c}{$K=4$}\\
\hline
Test$\setminus$ Quantile & Minimum &Lower Quartile& Median & Upper Quartile & Maximum  \\
\hline
$T_{el}$&0.0000 &0.0002 &0.0002 &0.0011 &0.9241  \\
$T_{pr}$&0.0000 &0.0000 &0.0000 &0.0000 &0.1024  \\
$T_{LR}$&0.0000 &0.0000 &0.0000 &0.0000 &0.0000 \\
\hline \hline
\end{tabular}
\end{center}
\caption{Quantiles of the $p$-values calculated from the empirical distribution functions $T_{el}$, $T_{pr}$, and $T_{LR}$ with $p=20$, $T=104$, and $K\in \{1,4\}$.}%
\label{tab2}%
\end{table}

In order to get a better understanding of the obtained results, we also plot the histograms for the values of the test statistics in Figure \ref{Fig:DAX} for $K=1$ (left hand-side plots) and for $K=4$ (right hand-side plots). Here, we observe that most of the values are much larger than the corresponding critical values presented in Table \ref{tab1}.

\begin{center} Figure 8 above here \end{center}

\subsection{Analysis of Stocks Included into the S\&P 500 Index}

In this subsection, we perform an analysis similar to the one provided in Section 7.1. In contrast to the models from Section 7.1, however, high-dimensional factor models are considered. These models are applied to model the dynamics in $100$ returns on stocks included into the S\&P 500 index where $100$ stocks are chosen randomly out of $500$ stocks included into the S\&P 500 index. As a result, $10^4$ models are fitted for which the high-dimensional tests of Section 5 are performed. We consider two types of factor models with one factor, the return of the S\&P 500 index, and nine factors (the S\&P 500 index, the NASDAQ-100, the NASDAQ bank index, the NASDAQ Composite index, the NASDAQ Biotechnology index, the NASDAQ Industrial index, the NASDAQ Transportation index, the NASDAQ Computer index, and the NASDAQ Telecommunications index). The weekly data are taken from the 11th of June, 2004 to the 10th of June, 2014 ($T=518$) from the Yahoo! finance web-page.

\begin{table}[ptbh]
\begin{center}
\begin{tabular}{ccccc}
\hline \hline
\multicolumn{5}{c}{$K=1$}\\
\hline
Test$\setminus \alpha$ & 0.1& 0.05 & 0.01 & 0.005  \\
\hline
$T_{el}$&17.4888 &18.9975 &22.4416 &23.5609 \\
$T_{pr}$&3.6521 &3.9673 &4.6190 &4.9366 \\
$T_{LR}$&1.2979 &1.6562 &2.3115 &2.5480 \\
\hline
\multicolumn{5}{c}{$K=9$}\\
\hline
Test$\setminus \alpha$ & 0.1& 0.05 & 0.01 & 0.005  \\
\hline
$T_{el}$&17.6366 &19.1353 &22.5938 &23.7658 \\
$T_{pr}$&3.6266 &3.9389 &4.5929 &4.8550 \\
$T_{LR}$&1.2746 &1.6037 &2.3308 &2.5761 \\
\hline \hline
\end{tabular}
\end{center}
\caption{Critical values of the $T_{el}$, $T_{pr}$, and $T_{LR}$ tests for $\alpha \in \{0.1,0.05,0.01,0.05\}$. We put $p=100$, $T=518$, and $K\in \{1,9\}$.}%
\label{tab3}%
\end{table}

In Table \ref{tab3}, we show the critical values of the considered tests which are calculated via simulations based on $10^5$ independent samples from the inverse Wishart distribution. The resulting samples of the test statistics are used to determine the corresponding empirical distribution functions which are then applied to the calculation of the $p$-values. The most important quantiles of the obtained $p$-values are shown in Table \ref{tab4}. In contrast to Section 7.1, here all maxima of $p$-values equal zero, meaning that the null hypothesis of the validity of the considered factor models are rejected by all tests in all of the considered cases. 
\begin{table}[ptbh]
\begin{center}
\begin{tabular}{cccccc}
\hline \hline
\multicolumn{6}{c}{$K=1$}\\
\hline
Test$\setminus$ Quantile & Minimum &Lower Quartile& Median & Upper Quartile & Maximum  \\
\hline
$T_{el}$&0.0000 &0.0000 &0.0000 &0.0000 &0.0000  \\
$T_{pr}$&0.0000 &0.0000 &0.0000 &0.0000 &0.0000  \\
$T_{LR}$&0.0000 &0.0000 &0.0000 &0.0000 &0.0000 \\
\hline
\multicolumn{6}{c}{$K=9$}\\
\hline
Test$\setminus$ Quantile & Minimum &Lower Quartile& Median & Upper Quartile & Maximum  \\
\hline
$T_{el}$&0.0000 &0.0000 &0.0000 &0.0000 &0.0000  \\
$T_{pr}$&0.0000 &0.0000 &0.0000 &0.0000 &0.0000  \\
$T_{LR}$&0.0000 &0.0000 &0.0000 &0.0000 &0.0000 \\
\hline \hline
\end{tabular}
\end{center}
\caption{Quantiles of the $p$-values calculated from the empirical distribution functions $T_{el}$, $T_{pr}$, and $T_{LR}$ with $p=20$, $T=104$, and $K\in \{1,9\}$.}%
\label{tab4}%
\end{table}

In Figure \ref{Fig:SP}, we also plot the histograms for the values of the test statistics in case of $K=1$ (left-hand side plots) and $K=9$ (right-hand side plots). The histograms document that the values of the calculated test statistics are much larger than the critical values presented in Table \ref{tab3}. These findings do not support the hypothesis that the linear dependencies between the asset returns can be fully explained by the selected factors.

\begin{center} Figure 9 above here \end{center}

\section{Summary}

Factor models of both small and large dimensions are a very attractive and popular modeling device nowadays. They are applied in different fields of science, like econometrics, economics, finance, biology, psychology, etc. While a lot of papers are devoted to the estimation of the parameters of factor models as well as to the determination of the number of factors, testing the validity of factor models has not been discussed widely in literature up to now. A notable exception is the test on the CAPM in low dimensions which is a special case of factor models.

In the present paper, we derive exact and asymptotic tests on the validity of factor models when the factors are observable. The results are obtained for both small-dimensional and high-dimensional factor models. The distributions of the suggested test statistics are derived under the assumption of normality and it is shown that they are independent of the diagonal elements of the precision matrix constructed from the dependent variables and factors. In order to investigate the powers of the considered tests, an extensive simulation study is performed. Its conclusion is that none of the tests performs uniformly better than the others and, consequently, the application of each test depends on the deviations to be detected under the alternative hypothesis. Finally, we apply the theoretical results of the paper in two empirical studies where factor models with different number of factors are fitted to the returns on stocks included into the DAX as well as the S\&P index. Our empirical results do not support the hypothesis that all linear dependencies between the returns can be entirely captured by the considered factors. As a result, the factor models, which are based on the considered market indices, are not in general valid in practice and the investor can apply them with care only because they are not able to explain all linear dependencies between the asset returns.

It is remarkable that the tests suggested in the paper are also distribution-free for a large class of matrix-variate distributions. For instance, an application of Theorem 5.12 in \citet{GuptaVargaBodnar2013} shows that the distribution of the considered test statistics is the same if data follow a matrix-variate elliptically contoured distribution. This family of distributions includes plenty of well-known models, like the normal distribution, mixture of normal distributions, the multivariate $t$-distribution, Pearson types II and VII distributions (see \citet{GuptaVargaBodnar2013}). Elliptically contoured distributions have been already applied in portfolio theory. \citet{owen1983class} extend Tobin's separation theorem and Bawa's rules of ordering certain prospects to elliptically contoured distributions. \citet{chamberlain1983characterization} shows that elliptical distributions imply mean-variance utility functions, whereas \citet{berk1997necessary} argues that one of the necessary conditions for the CAPM is an elliptical distribution for the asset returns. Moreover, \citet{zhou1993asset} generalizes the test of \citet{gibbons1989test} on the efficiency of a given portfolio to elliptically distributed returns. \citet{hodgson2002testing} propose a test for the CAPM under elliptical assumptions (see, also the textbook of \citet{GuptaVargaBodnar2013} for further results and applications to financial data). Finally, we point out that, since in the derivation of the high-dimensional asymptotic distributions of the test statistics their finite sample distributions are used, the above result holds true for both, low-dimensional and high-dimensional factor models.

The suggested tests and their distributions are derived under the assumption that the factors are observable which is motivated by the application of the CAPM and the APT. An important question is how to extend the suggested testing procedures to the case when the factors are unobservable, especially, when the number of factors is unknown as well. It is noted that the unknown factors can be estimated very accurately in high dimensions as shown in \citet{BaiNg2002determining} and \citet{baing2013}. Consequently, the estimation of unknown factors is expected to have no large impact on the testing procedures suggested in the paper. The above two generalizations of our results are very attractive both, from a theoretical and a practical point of view and they will be treated in a consequent paper.

\section{Appendix}

In this section the proofs of lemmas and theorems are given.

\subsection*{Proof of Lemma \ref{lem1}}
\begin{proof}
In the proof we deal with the case $i=1$ only and note that the other equalities can be derived similarly. Let $\bA$ and $\bB$ be partitioned as
\[
\bA=\left(
      \begin{array}{cc}
        a_{11} & \ba_{12} \\
        \ba_{12}^\top & \bA_{22} \\
      \end{array}
    \right)
\quad \text{and} \quad
\bB=\bA^{-1}=\left(
      \begin{array}{cc}
        b_{11} & \bb_{12} \\
        \bb_{12}^\top & \bB_{22} \\
      \end{array}
    \right)
\]

The application of the inverse formula for the partitioned matrix (see Theorem 8.5.11 of \citet{Harville1997}) yields
\[
b_{11}=a_{11}^{-1}+a_{11}^{-1}\ba_{12}\left(\bA_{22}-\frac{\ba_{12}^\top\ba_{12}}{a_{11}}\right)^{-1}\ba_{12}^\top a_{11}^{-1}
\]

Since $\left(\bA_{22}-\frac{\ba_{12}^\top\ba_{12}}{a_{11}}\right)$ is positive definite, we deduce that its inverse is positive definite and, hence,
\[a_{11}^{-1}\ba_{12}\left(\bA_{22}-\frac{\ba_{12}^\top\ba_{12}}{a_{11}}\right)^{-1}\ba_{12}^\top a_{11}^{-1} \ge 0\,,\]
i.e., $b_{11}\ge a_{11}^{-1}$, where the equality is present only if $\ba_{12}=\mathbf{0}$.
\end{proof}

\vspace{0.5cm}
In the proofs of Theorems \ref{th1} and \ref{th2}, we use the result of Lemma \ref{lem2}. In the following we consider several partitions of $\bV_{11}$ defined in (\ref{part_sampl_prec}) which are constructed with respect
to its diagonal elements. In case of the first diagonal elements we get
\begin{equation}\label{part_sampl_V11}
\bV_{11}=\left(
  \begin{array}{cc}
    v_{11} & \bv_{12,1} \\
    \bv_{21,1} & \bV_{22,1} \\
  \end{array}\right)\,,
\end{equation}
whereas for the $j$-th diagonal element, a similar partition is considered where the vector $\bv_{21,j}$ is
obtained by deleting the $j$th element form the $j$th column of $\bV_{11}$ and $\bV_{22,j}$ is calculated by
deleting the $j$th column and the $j$th row of $\bV_{11}$.

Let $\bOmega_{11}$ be partitioned similar to (\ref{part_sampl_prec}) whose elements we denote by $\omega_{jj}$, $\bomega_{21,j}$, and $\bOmega_{22,j}$ for $j = 1,\ldots, p$. Next, we consider the test statistic
\begin{equation}\label{test_stat_col}
Z_j = \frac{T-p-K+1}{q} \frac{\bv^\top_{21,j}\bL^\top (\bL\bQ_{j}\bL^\top)^{-1}\bL\bv_{21,j}}{v_{jj}}
\end{equation}
for $j = 1,\ldots, p$ with $\bQ_j =\bV_{22,j}-\bv_{21,j}\bv_{21,j}^\top/v_{jj}$ in order to test the hypotheses
\begin{equation}\label{test_col}
H_{0,j}: \bL \bomega_{21,j}= \mathbf{0} \qquad \text{versus} \qquad
H_{1,j}: \bL \bomega_{21,j}=\bd_j \neq \mathbf{0} \qquad \text{for} \qquad
 j = 1,\ldots,p \,,
\end{equation}
where $\bL: q\times (p-1)$ is a matrix of constants.

Both test statistics $T_{ij}$ and $T_j$ for $j=1,\ldots,p$ and $1 \le j <i \le p$ can be obtained from $Z_j$ for some choices of the matrix $\bL$. Later on, we make use of this result for proving Theorems \ref{th1a} and \ref{th1}.

In Lemma 2, the distribution of $Z_j$ is derived under both the null and the alternative hypotheses.\\[0.2cm]

\begin{lemma}\label{lem2}
Let $\bX_t$ follow model (\ref{fm}) where $\bff_t$ and $\bu_t$ are independent and normally distributed. Then:
\begin{enumerate}[(a)]
\item The density of $Z_j$ is given by
\begin{eqnarray*}
F_{Z_j}(x)&=& F_{q,T-K-p+1}(x) (1+\lambda_j)^{-(T-K-p+1+q)/2}\\
&\times& \,_2F_1\Big(\frac{T-K-p+1+q}{2},\frac{T-K-p+1+q}{2},\frac{q}{2};
\frac{qx}{T-K-p+1+qx} \; \frac{\lambda_j}{1+\lambda_j}  \Big) \,,
\end{eqnarray*}
where $\lambda_j = \omega_{jj}^{-1}\bd_j^\top (\bL\bXi_j \bL^\top )^{-1}\bd_j$ with $\bXi_j = \bOmega_{22,j} -\bomega_{21,j}\bomega^\top_{21,j}/\omega^\top_{11,j}$.
\item Under $H_{0_j}$ it holds that $Z_j \sim \mathcal{F}_{p-1,T-K-p+1}$.
\end{enumerate}
\end{lemma}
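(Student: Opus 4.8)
The plan is to reduce $Z_j$, via the block algebra of the (inverse) Wishart law, to a ratio of a (conditionally) noncentral $\chi^2$ and an independent $\chi^2$, and then to collapse the resulting mixture into the closed form using the Poisson representation of the noncentral $\chi^2$ together with the definition of ${}_2F_1$ given above.

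\emph{Step 1: block reduction.} Set locally $\bA:=\bV_{11}^{-1}$; by $\bV_{11}\sim\mathcal W_p^{-1}(T-K+p+1,\bOmega_{11})$ this is $\mathcal W_p(T-K,\bOmega_{11}^{-1})$. Partition $\bV_{11}$ and $\bA$ symmetrically about the $j$th coordinate, writing $a_{jj},\ba_{21,j},\bA_{22,j}$ for the blocks of $\bA$. The partitioned-inverse identities give $\bQ_j=\bV_{22,j}-\bv_{21,j}\bv_{21,j}^\top/v_{jj}=\bA_{22,j}^{-1}$, $\bv_{21,j}/v_{jj}=-\bA_{22,j}^{-1}\ba_{21,j}$ and $v_{jj}=(a_{jj}-\ba_{21,j}^\top\bA_{22,j}^{-1}\ba_{21,j})^{-1}=:a_{jj\cdot 2}^{-1}$. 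Substituting these into the definition of $Z_j$ yields $Z_j=(R_j/q)/(W/(T-K-p+1))$ with $R_j:=\omega_{jj}\,\ba_{21,j}^\top\bA_{22,j}^{-1}\bL^\top(\bL\bA_{22,j}^{-1}\bL^\top)^{-1}\bL\bA_{22,j}^{-1}\ba_{21,j}$ and $W:=\omega_{jj}\,a_{jj\cdot 2}$.

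\emph{Step 2: Wishart conditionals; part (b).} By the conditional structure of the Wishart law, $a_{jj\cdot 2}$ is independent of $(\ba_{21,j},\bA_{22,j})$ with $\omega_{jj}a_{jj\cdot 2}\sim\chi^2_{T-K-p+1}$, so $W\sim\chi^2_{T-K-p+1}$ and $W$ is independent of $R_j$. Moreover $\ba_{21,j}\mid\bA_{22,j}\sim\mathcal N_{p-1}(-\omega_{jj}^{-1}\bA_{22,j}\bomega_{21,j},\,\omega_{jj}^{-1}\bA_{22,j})$, where the regression slope $-\bomega_{21,j}/\omega_{jj}$ and the residual variance $\omega_{jj}^{-1}$ come again from the partitioned-inverse identities for $\bOmega_{11}$ and $\bOmega_{11}^{-1}$. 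Hence $\bL\bA_{22,j}^{-1}\ba_{21,j}\mid\bA_{22,j}\sim\mathcal N_q(-\omega_{jj}^{-1}\bL\bomega_{21,j},\,\omega_{jj}^{-1}\bL\bA_{22,j}^{-1}\bL^\top)$, and whitening gives $R_j\mid\bA_{22,j}\sim\chi^2_{q,\delta_j}$ with $\delta_j=\omega_{jj}^{-1}(\bL\bomega_{21,j})^\top(\bL\bA_{22,j}^{-1}\bL^\top)^{-1}(\bL\bomega_{21,j})$. Under $H_{0,j}$ we have $\bL\bomega_{21,j}=\bzero$, hence $\delta_j\equiv 0$, so $R_j\sim\chi^2_q$ is independent of $W$ and $Z_j\sim\mathcal F_{q,T-K-p+1}$, which is part (b).

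\emph{Step 3: random noncentrality and the density; part (a).} It remains to find the law of $\delta_j$. Since $\bA_{22,j}\sim\mathcal W_{p-1}(T-K,\bXi_j^{-1})$ --- its scale being the $(2,2)$-block of $\bOmega_{11}^{-1}$, equal to $\bXi_j^{-1}$ --- a careful degrees-of-freedom count for the compressed inverse Wishart $\bL\bA_{22,j}^{-1}\bL^\top$ gives $(\bL\bA_{22,j}^{-1}\bL^\top)^{-1}\sim\mathcal W_q(T-K-p+1+q,(\bL\bXi_j\bL^\top)^{-1})$, so that $\delta_j\dis\lambda_j\,\chi^2_{T-K-p+1+q}$ with $\lambda_j=\omega_{jj}^{-1}\bd_j^\top(\bL\bXi_j\bL^\top)^{-1}\bd_j$ as stated. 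Now use $\chi^2_{q,\delta}\dis\chi^2_{q+2N}$, $N\mid\delta\sim\mathrm{Poisson}(\delta/2)$; mixing the Poisson rate over the Gamma law of $\delta_j/2$ makes $N$ negative binomial, $\Pr(N=k)=\frac{\Gamma(s+k)}{\Gamma(s)\,k!}(1+\lambda_j)^{-s}\big(\frac{\lambda_j}{1+\lambda_j}\big)^{k}$ with $s=\frac{T-K-p+1+q}{2}$. As $W$ is independent of the pair $(N,\chi^2_{q+2N})$, conditioning on $N=k$ gives $Z_j\dis\frac{q+2k}{q}\,\mathcal F_{q+2k,T-K-p+1}$; writing the density of $Z_j$ as $\sum_{k\ge 0}\Pr(N=k)\,\frac{q}{q+2k}\,F_{q+2k,T-K-p+1}\!\big(\frac{qx}{q+2k}\big)$, peeling off the central factor $F_{q,T-K-p+1}(x)$ and identifying the remaining power series in $z=\frac{qx}{T-K-p+1+qx}\cdot\frac{\lambda_j}{1+\lambda_j}$ with the ${}_2F_1(s,s,\frac q2;z)$ series yields exactly the density in part (a).

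\emph{Main obstacle.} The only genuinely delicate point is the degrees-of-freedom bookkeeping in Step 3: the mixing $\chi^2$ carries $T-K-p+1+q$ degrees of freedom (reading off the naive $T-K$ would be wrong), which rests on the fact that a compression $\bL(\cdot)\bL^\top$ of an inverse Wishart is again an inverse Wishart but with a loss of degrees of freedom. Everything else is partitioned-matrix algebra and a routine series manipulation; alternatively, part (a) can be obtained from the distributional results on quadratic forms in an inverse partitioned Wishart in \citet{BodnarOkhrin2008}.
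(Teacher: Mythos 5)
Your proposal is correct and follows essentially the same route as the paper: both reduce $Z_j$ to a conditionally noncentral $\chi^2_q$ over an independent $\chi^2_{T-K-p+1}$, with the noncentrality $\omega_{jj}^{-1}\bd_j^\top(\bL\bQ_j\bL^\top)^{-1}\bd_j$ governed by the Wishart law $(\bL\bQ_j\bL^\top)^{-1}\sim\mathcal W_q(T-K-p+1+q,(\bL\bXi_j\bL^\top)^{-1})$, which is exactly the delicate degrees-of-freedom point you flag. The only differences are presentational: you derive the conditionals from the Bartlett/partitioned-Wishart structure of $\bV_{11}^{-1}\sim\mathcal W_p(T-K,\bOmega_{11}^{-1})$ rather than citing \citet{BodnarOkhrin2008}, and you sum the final series via the negative-binomial (Gamma-mixed Poisson) representation, which reproduces term by term the paper's computation of the Wishart moments ${\rm E}\{(\bd_j^\top\tilde{\mathbf{C}}\bd_j)^i\}$ and yields the same ${}_2F_1$ expression.
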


\begin{proof}
\begin{enumerate}[(a)]
\item We consider
\begin{equation}\label{app_Z_j}
Z_j=\frac{T-K-p+1}{q} \frac{\omega_{jj}\left(\bL\frac{\bv^\top_{21,j}}{v_{jj}}\right)^\top (\bL\bQ_{j}\bL^\top)^{-1}\left(\bL\frac{\bv^\top_{21,j}}{v_{jj}}\right)}
{\omega_{jj}/v_{jj}}
\end{equation}

From the proof of Theorem 3 in \citet{BodnarOkhrin2008} we get that
\begin{equation*}
\frac{\bv_{21,j}}{v_{jj}}|\bQ_j=\bD \sim  \mathcal{N}_{p-1}\left(\frac{\bomega_{21,j}}{\omega_{jj}},\omega_{jj}^{-1} \bD\right)
\end{equation*}
and, consequently,
\begin{equation*}
\omega_{jj}\left(\bL\frac{\bv^\top_{21,j}}{v_{jj}}\right)^\top (\bL\bQ_{j}\bL^\top)^{-1}\left(\bL\frac{\bv^\top_{21,j}}{v_{jj}}\right) | (\bL\bQ_j\bL^\top)^{-1}=\bC \sim \chi^2_{q,\lambda_j(\bC)}
\end{equation*}
with $\lambda_j(\bC)=\omega_{jj}^{-1} \bd_j^\top \bC \bd_j$ which is independent of $v_{jj}$ (see, e.g., Theorem 3 in \citet{BodnarOkhrin2008}). Furthermore, it holds that $v_{jj} \sim \mathcal{W}^{-1}_1(T+p+K+1-2(p+K-1),\omega_{jj})$ and, hence,
\[ \frac{\omega_{jj}}{v_{jj}} \; \sim \; \chi^2_{T-K-p+1} \; . \]
Putting these results together we get
\[ Z_j|(\bL\bQ_j\bL^\top)^{-1}=\bC \; \sim \; \mathcal{F}_{q,T-K-p+1,\lambda(\bC)} \;
. \]

Because $(\bL\bQ_j\bL^\top)^{-1} \sim \mathcal{W}_{q}(T-K-p+1+q,(\bL\bXi_j\bL^\top)^{-1})$, we get
\[F_{Z_j}(x) = \int_{{\bf C}>0} F_{q,T-K-p+1,\lambda_j(\bC)}(x) W_{q}(T-K-p+1+q,(\bL\bXi_j\bL^\top)^{-1})(\bC) d\bC \; , \]
where $F_{i,j,\lambda}$ denotes the density of the non-central $\mathcal{F}$-distribution with degrees $i$ and $j$ and noncentrality parameter $\lambda$; $W_{q}(i,\bol{\Lambda})$ stands for the density of the $q$-dimensional Wishart distribution with degrees $i$ and covariance matrix $\bol{\Lambda}$. If $\lambda=0$ we briefly write $F_{i,j}$. It holds that (e.g., Theorem 1.3.6 of \citet{Muirhead})
\begin{eqnarray*}
F_{q,T-K-p+1,\lambda(\mathbf{C})}(x)& = & F_{q,T-K-p+1}(x) \,
\exp{\left\{-\frac{\lambda(\mathbf{C})}{2}\right\}} \frac{\Gamma\left(q/2\right)}{\Gamma\left\{(T-K-p+1+q)/2\right\}}\\
&\times& \sum_{i=0}^\infty
\frac{\Gamma\left\{(T-K-p+1+q)/2+i\right\}}{\Gamma\left(q/2+i\right)}\frac{\lambda(\mathbf{C})^i}{i!}\left\{\frac{qx}{2(T-K-p+1+qx)}
 \right\}^i \, .
\end{eqnarray*}
Let us denote
\[k(i)\,=\, \frac{1}{i!}\frac{\Gamma\left\{(T-K-p+1+q)/2+i\right\}}{\Gamma\left\{(T-K-p+1+q)/2\right\}}\frac{\Gamma\left(q/2\right)}{\Gamma\left(q/2+i\right)}
\left\{\frac{qx}{2(T-K-p+1+qx)} \right\}^i.\]
Using the notation ${\rm etr}(\mathbf{A})=\exp({\rm trace}(\mathbf{A}))$ for a square matrix $\mathbf{A}$, we get
\begin{eqnarray*}
F_{Z_j}(x) &=& F_{q,T-K-p+1}(x) \, \sum_{i=0}^\infty k(i)
\int_{\mathbf{C}>0} \lambda_j(\mathbf{C})^i
\exp{\left\{-\frac{\lambda_j(\mathbf{C})}{2}\right\}} \, \frac{1}{2^{q(T-K-p+1+q)/2}
\Gamma_{q}\left(\frac{T-K-p+1+q}{2}\right)}\\
&\times& |\bL\bXi_j\bL^\top|^{\frac{T-K-p+1+q}{2}}
|\mathbf{C}|^{\frac{T-K-p}{2}} {\rm etr} \left\{-\frac{1}{2}(\bL\bXi_j\bL^\top)\mathbf{C}\right\}
d\mathbf{C}\\
 &=& F_{q,T-K-p+1}(x) \, \sum_{i=0}^\infty k(i)
\int_{\mathbf{C}>0}  \,|\bL\bXi_j\bL^\top|^{\frac{T-K-p+1+q}{2}} \frac{1}{2^{q(T-K-p+1+q)/2}
\Gamma_{q}(\frac{T-K-p+1+q}{2})}\\
&\times&  |\mathbf{C}|^{\frac{T-K-p}{2}}\left(\omega_{jj}^{-1}\bd^\top_j \mathbf{C}\bd_j\right)^i
{\rm etr}\left\{-\frac{1}{2}(\bL\bXi_j\bL^\top+ \omega_{jj}^{-1}\bd_j \bd^\top_j)\mathbf{C}\right\}d\mathbf{C}\\
&=& F_{q,T-K-p+1}(x) \, |\bL\bXi_j\bL^\top|^{\frac{T-K-p+1+q}{2}}
|\bL\bXi_j\bL^\top+ \omega_{jj}^{-1}\bd_j \bd^\top_j|^{-\frac{T-K-p+1+q}{2}}
 \\
&\times& \sum_{i=0}^\infty k(i) \omega_{jj}^{-i} \;
{\rm E}\left\{(\bd^\top_j \mathbf{\tilde{C}} \bd_j)^i\right\} \ ,
\end{eqnarray*}
where $\mathbf{\tilde{C}}\sim \mathcal{W}_{q}(T-K-p+1+q,(\bL\bXi_j\bL^\top+ \omega_{jj}^{-1}\bd_j \bd^\top_j)^{-1})$. From
Theorem 3.2.8 of \citet{Muirhead} we obtain that
\begin{eqnarray*}
{\rm E}\left\{(\bd^\top_j \mathbf{\tilde{C}}\bd_j)^i\right\}
 &=& 2^i \, \frac{\Gamma\left\{(T-K-p+1+q)/2+i\right\}}{\Gamma\left\{(T-K-p+1+q)/2\right\}} \,(\bd^\top_j \{\bL\bXi_j\bL^\top+ \omega_{jj}^{-1}\bd_j \bd^\top_j\}^{-1}\bd_j)^i\\
&=& 2^i \, \frac{\Gamma\left\{(T-K-p+1+q)/2+i\right\}}{\Gamma\left\{(T-K-p+1+q)/2\right\}} \left\{\frac{\bd^\top_j (\bL\bXi_j\bL^\top)^{-1}\bd_j}{1+\omega_{jj}^{-1}\bd^\top_j (\bL\bXi_j\bL^\top)^{-1}\bd_j}\right\}^i \,.
\end{eqnarray*}

Finally,
\begin{eqnarray*}
F_{Z_j}(x) &=& F_{q,T-K-p+1}(x) (1+\omega_{jj}^{-1}\bd^\top_j (\bL\bX_j\bL^\top)^{-1}\bd_j)^{-(T-K-p+1+q)/2}\\
& \times & \frac{\Gamma\left(q/2\right)}{\Gamma\left\{(T-K-p+1+q)/2\right\} \Gamma\left\{(T-K-p+1+q)/2\right\}}\\
&\times&\sum_{i=0}^\infty \frac{\Gamma\left\{(T-K-p+1+q)/2+i\right\} \Gamma\left\{(T-K-p+1+q)/2+i\right\}}{i!\Gamma\left(q/2+i\right)}\\
&\times& \left\{\frac{qx\omega_{jj}^{-1}\bd^\top_j (\bL\bXi_j\bL^\top)^{-1}\bd_j}{(T-K-p+1+qx)(1+\omega_{jj}^{-1}\bd^\top_j (\bL\bXi_j\bL^\top)^{-1}\bd_j)} \right\}^i \\
&=& F_{q,T-K-p+1}(x) (1+\lambda_j)^{-(T-K-p+1+q)/2}\\
&\times& \,_2F_1\Big(\frac{T-K-p+1+q}{2},\frac{T-K-p+1+q}{2},\frac{q}{2};
\frac{qx}{T-K-p+1+qx} \; \frac{\lambda_j}{1+\lambda_j}  \Big) \, .
\end{eqnarray*}
The result is proved.

\item The statement follows by noting that $\lambda_j=0$ under $H_{0,j}$ and
$$_2F_1\Big(\frac{T-K-p+1+q}{2},\frac{T-K-p+1+q}{2},\frac{q}{2};0 \Big)=1.$$
\end{enumerate}
\end{proof}

\subsection*{Proof of Theorem \ref{th1a}}
\begin{proof}
The proof is based on the observation that the test statistic $T_{ij}$ for each $1\le j<i \le p$ can be presented as $Z_j$ from \eqref{app_Z_j} with $q=1$ and $\bL=(0,\ldots,0,1,0,\ldots,0)$ (the vector of zeros with exception of the $(i-1)$-th element which is one). In order to show this, we consider
\[\bL\bv_{21,j}=v_{ij}
\quad \text{and} \quad
(\bL\bQ_{j}\bL^\top)^{-1}=v_{ii}-\frac{v_{ij}^2}{v_{jj}} \,.\]

Hence,
\begin{equation*}
Z_j=\frac{T-K-p+1}{1}\frac{v_{ij}^2}{v_{jj}\left(v_{ii}-\frac{v_{ij}^2}{v_{jj}}\right)}=T_{ij}
\end{equation*}
and an application of Lemma 2 leads to the statement of Theorem \ref{th1a} with
\[\lambda_{ij}=\frac{d_{ij}^2}{\omega_{jj}\left(\omega_{ii}-d_{ij}^2/\omega_{jj}\right)}\,.\]
\end{proof}

\subsection*{Proof of Theorem \ref{th1}}
\begin{proof}
For the $j$-th test statistic with $\bL=\bI_{p-1}$ we get
\begin{equation}\label{test_stat_col}
Z_j = \frac{T-K-p+1}{q} \frac{\bv^\top_{21,j}\bL^\top (\bL\bQ_{j}\bL^\top)^{-1}\bL\bv_{21,j}}{v_{jj}}
=\frac{T-p-K+1}{q}v_{jj}(v_{jj}^{(-)}-v_{jj}^{-1})=T_j\,,
\end{equation}
where $v_{jj}^{(-)}$ stands for the $j$-th diagonal element of $\bV_{11}^{-1}$ and the second equality is obtained from (see, Theorem 8.5.11 of
\citet{Harville1997})
\[v_{jj}^{(-)}=v_{jj}^{-1}+v_{jj}^{-1}\bv^\top_{21,j}\bL^\top (\bL\bQ_{j}\bL^\top)^{-1}\bL\bv_{21,j}v_{jj}^{-1}\,.\]

The rest of the proof follows from Lemma \ref{lem1} with
\[\lambda_{j}=(\omega_{jj}^{(-)}\omega_{jj}-1)\,.\]
\end{proof}

\subsection*{Proof of Theorem \ref{th2}}

\begin{proof}
Let $\bD={\rm diag}(\omega_{11},\ldots,\omega_{pp})$ and $\bD_j={\rm diag}(\omega_{11},\ldots,\omega_{j-1,j-1},\omega_{j+1,j+1},\ldots,\omega_{pp})$. We consider
\[\bV_{11}^*=\bD^{-1/2}\bV_{11}\bD^{-1/2} \sim \mathcal{W}_p^{-1}(T-K+p+1,\bI)\,.\]
Then, it holds that
\begin{eqnarray*}
v^*_{11,j}&=&\frac{v_{jj}}{\omega_{jj}}, \quad \bv_{21,j}^*=\omega_{jj}^{-1/2}\bD_j^{-1/2}\tilde{\bv}_{21,j} \,,\\
\bQ_j^*&=&\bV_{22,j}^*-\frac{\bv_{21,j}^*(\bv_{21,j}^*)^\top}{v^*_{11,j}}
=\bD_j^{-1/2}\bV_{22,j}\bD_j^{-1/2}-\frac{\bD_j^{-1/2}\bv_{21,j}\bv_{21,j}^\top\bD_j^{-1/2}/\omega_{jj}}{v_{jj}/\omega_{jj}}\\
&=&\bD_j^{-1/2}\bQ_j\bD_j^{-1/2}\,.
\end{eqnarray*}

Hence,
\begin{eqnarray*}
Z_j^*&=& \frac{T-K-p+1}{p-1} \frac{(\bv_{21,j}^*)^\top(\bQ_{j}^*)^{-1}\bv_{21,j}^*}{v_{jj}^*}\\
&=& \frac{T-K-p+1}{p-1} \frac{\bv_{21,j}^\top\bD_j^{-1/2}\bD_j^{1/2}\bQ_{j}^{-1}\bD_j^{1/2}\bD_j^{-1/2}\bv_{21,j}/\omega_{jj}}{v_{jj}^*/\omega_{jj}}
=Z_j\,.
\end{eqnarray*}
As the joint distribution of $(Z_1^*,\ldots,Z_p^*)^\top$ is fully determined by the distribution of $\bV^*_{11}$ which does not depend on $\omega_{11},\ldots,\omega_{pp}$, and as the distribution of $(Z_1,\ldots,Z_p)^\top$ coincides with the distribution of $(Z_1^*,\ldots,Z_p^*)^\top$, we get that the distribution of $(Z_1,\ldots,Z_p)^\top$ is independent of $\omega_{11},\ldots,\omega_{pp}$. Finally, noting that the distribution of $T_{pr}$ is fully determined by the distribution of $(Z_1,\ldots,Z_p)^\top$, the statement of the theorem follows.
\end{proof}

\subsection*{Proof of Theorem \ref{th3a}}

\begin{proof}
The results of Theorem \ref{th3a}.(a) follows directly from Theorem \ref{th1a} and the fact that $\chi^2_q/q \stackrel{a.s.}{\longrightarrow} 1$ as $q\rightarrow \infty$.

Next we prove the statement of Theorem \ref{th3a}.(b). It holds that
\[
\sqrt{T_{ij}}=\sqrt{T-K-p+1}\frac{\sqrt{v_{jj}}}{\sqrt{\omega_{jj}}}\sqrt{\omega_{jj}}\frac{v_{ij}/v_{jj}}{\sqrt{v_{ii}-v_{ij}^2/v_{jj}}}\,.
\]

From the proof of Lemma \ref{lem2} we get
\[
\frac{v_{ij}}{v_{jj}}\Big|\left(v_{ii}-v_{ij}^2/v_{jj}\right) \sim \mathcal{N}\left(\frac{\omega_{ij}}{\omega_{jj}},\omega_{jj}^{-1}\left(v_{ii}-v_{ij}^2/v_{jj}\right)\right)
\]
and, hence,
\[
\sqrt{\omega_{jj}}\frac{v_{ij}/v_{jj}}{\sqrt{v_{ii}-v_{ij}^2/v_{jj}}}-\frac{\sqrt{\omega_{jj}}}{\sqrt{v_{ii}-v_{ij}^2/v_{jj}}}\frac{\omega_{ij}}{\omega_{jj}}
\Big|\left(v_{ii}-v_{ij}^2/v_{jj}\right) \sim \mathcal{N}\left(0,1\right)\,.
\]

Since the conditional distribution given in the last equation does not depend on the condition $\left(v_{ii}-v_{ij}^2/v_{jj}\right)$ it is also the unconditional distribution of the difference. Moreover, following the proof of Lemma \ref{lem2} we get
\[
\frac{\omega_{jj}}{v_{jj}}\sim \chi^2_{T-K-p+1}
\quad \text{and} \quad
\frac{\omega_{ii}-\omega_{ij}^2/\omega_{jj}}{v_{ii}-v_{ij}^2/v_{jj}}\sim \chi^2_{T-K-p+2}\,.
\]

Hence,
\[
\frac{1}{T-K-p+1}\frac{\omega_{jj}}{v_{jj}} \stackrel{a.s.}{\longrightarrow} 1
\quad \text{and} \quad
\frac{1}{T-K-p+2}\frac{\omega_{ii}-\omega_{ij}^2/\omega_{jj}}{v_{ii}-v_{ij}^2/v_{jj}} \stackrel{a.s.}{\longrightarrow} 1
\]
as $T-K-p+1 \longrightarrow \infty$. This leads to
\[\sqrt{T_{ij}}-\sqrt{T-K-p+2}\sqrt{\lambda_{ij}} \stackrel{d.}{\longrightarrow} \mathcal{N}(0,1)\]
and, hence,
\[\left(\sqrt{T_{ij}}-\sqrt{T-K-p+2}\sqrt{\lambda_{ij}}\right)^2 \stackrel{d.}{\longrightarrow} \chi^2_{1}\,.\]

The result in case of $T-K-p \longrightarrow d \in (0,\infty)$ is obtained in the same way.
\end{proof}

\subsection*{Proof of Theorem \ref{th3}}

\begin{proof}
\begin{enumerate}[(a)]
\item First, we consider the case $p/(T-K) \rightarrow c \in (0,1)$. Then it holds that
\[\sqrt{p-1}\left(T_j-1 \right)=\frac{\sqrt{p-1}\dfrac{\omega_{jj}\left(\frac{\bv^\top_{21,j}}{v_{jj}}\right)^\top \bQ_{j}^{-1}\left(\frac{\bv^\top_{21,j}}{v_{jj}}\right)}{p-1}-\sqrt{p-1}\dfrac{\omega_{jj}/v_{jj}}{T-K-p+1}}
{(\omega_{jj}/v_{jj})/(T-K-p+1)}\,.
\]

From the proof of Lemma \ref{lem2}, we get that $\omega_{jj}/v_{jj} \sim \chi^2_{T-K-p+1}$ and it is independent of
\[\omega_{jj}\left(\frac{\bv^\top_{21,j}}{v_{jj}}\right)^\top \bQ_{j}^{-1}\left(\frac{\bv^\top_{21,j}}{v_{jj}}\right) \sim \chi^2_{p-1}\,.\]
Hence, from the law of large numbers and the central limit theorem we get $(\omega_{jj}/v_{jj})/(T-K-p+1)\stackrel{a.s.}{\longrightarrow} 1$ as $T-K-p+1\rightarrow \infty$,
\begin{equation}\label{th3_as_den_H0}
\sqrt{T-K-p+1}\left(\frac{\omega_{jj}/v_{jj}}{T-K-p+1}-1\right)\stackrel{d.}{\longrightarrow} \mathcal{N}(0,2) ~~\text{for} ~~T-K-p\rightarrow \infty
\end{equation}
and
\begin{equation}\label{th3_as_num_H1}
\sqrt{p-1}\left(\dfrac{\omega_{jj}\left(\frac{\bv^\top_{21,j}}{v_{jj}}\right)^\top \bQ_{j}^{-1}\left(\frac{\bv^\top_{21,j}}{v_{jj}}\right)}{p-1}
-1\right) \stackrel{d.}{\longrightarrow} \mathcal{N}(0,2) ~~\text{for} ~~p\rightarrow \infty\,,
\end{equation}
as well as that both summands in the numerator are independent. Hence,
\begin{equation*}
\sqrt{p-1}\left(T_j-1\right)\stackrel{d.}{\longrightarrow} \mathcal{N}\left(0,\frac{2}{1-c}\right) ~~\text{for} ~~ \frac{p}{T-K} \rightarrow c \in (0,1) ~~\text{as}~~ T-K \rightarrow \infty \,.
\end{equation*}

Now, let $T-K-p \rightarrow d \in (0,\infty)$ as $T-K \rightarrow \infty$. Then, we get
\[\frac{\omega_{jj}\left(\frac{\bv^\top_{21,j}}{v_{jj}}\right)^\top \bQ_{j}^{-1}\left(\frac{\bv^\top_{21,j}}{v_{jj}}\right)}{p-1}
 \stackrel{a.s.}{\longrightarrow} 1 ~~ \text{for} ~~ p \rightarrow \infty\]
and
\[\frac{\omega_{jj}}{v_{jj}} \stackrel{d.}{\longrightarrow} \chi^2_{d+1} ~~ \text{for} ~~ T-K-p \rightarrow d \in (0,\infty)\,.\]
Putting these two results together we get the statement of the second part of Theorem \ref{th3}.(a).

\item The proof of Theorem \ref{th3}.(b) is achieved in the same way as the part (a) of this theorem. The only point which remains to be investigate is the asymptotic distribution of the numerator in the expression of $T_j$.

From the proof of Lemma \ref{lem2} we get
\begin{equation*}
\omega_{jj}\left(\frac{\bv^\top_{21,j}}{v_{jj}}\right)^\top \bQ_{j}^{-1}\left(\frac{\bv^\top_{21,j}}{v_{jj}}\right) | \bQ_j^{-1}=\bC \sim \chi^2_{p-1,\lambda_j(\bC)}\,,
\end{equation*}
where $\lambda_j(\bC)=\omega_{jj}^{-1} \bd_j^\top \bC \bd_j$. In the following we make use of:

\vspace{0.5cm}
\begin{lemma}\label{lem3}
Let $\mathbf{Y}=(Y_1,\ldots,Y_p)^\top\sim \mathcal{N}_p (\bol{\mu},\bI)$ with $\bol{\mu}=(\mu_1,\ldots,\mu_p)^\top$. Then for the random variable $Z^{(p)}=\mathbf{Y}^\top \mathbf{Y}\sim \chi^2_{p,\lambda}$ with $\lambda_p=\bol{\mu}^\top \bol{\mu}$ such that $\lim_{p\rightarrow \infty}\lambda_p/p < \infty$, we get
\begin{enumerate}[(a)]
\item
\begin{equation}
\frac{Z^{(p)}}{p}-1-\frac{\lambda_p}{p} \stackrel{a.s.}{\longrightarrow} 0 ~~ \text{for} ~~ p \rightarrow \infty \,.
\end{equation}
\item
\begin{equation}
\sqrt{p} \frac{\frac{Z^{(p)}}{p}-1-\frac{\lambda_p}{p}}{\sqrt{2\left(1+2\frac{\lambda}{p}\right)}} \stackrel{d.}{\longrightarrow} \mathcal{N}(0,1) ~~ \text{for} ~~ p \rightarrow \infty \,.
\end{equation}
\end{enumerate}
\end{lemma}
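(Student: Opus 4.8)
The plan is to treat the two parts with different classical tools: part (a) via a strong law of large numbers for independent summands, and part (b) via convergence of moment generating functions, which is the natural device here since the law of $Z^{(p)}$ depends on $\bol{\mu}$ only through $\lambda_p=\bol{\mu}^\top\bol{\mu}$.

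For part (a) I would write $Z^{(p)}=\sum_{i=1}^p Y_i^2$ with the $Y_i\sim\mathcal{N}(\mu_i,1)$ independent, so $\E[Y_i^2]=1+\mu_i^2$ and $\Var(Y_i^2)=2+4\mu_i^2$. Kolmogorov's strong law for independent (not necessarily identically distributed) variables applies as soon as $\sum_{i\ge1}\Var(Y_i^2)/i^2<\infty$. The part $\sum_i 2/i^2$ is finite, and $\sum_i\mu_i^2/i^2<\infty$ follows from the assumption $\lim_{p\to\infty}\lambda_p/p<\infty$ by Abel summation: writing $\mu_i^2=\lambda_i-\lambda_{i-1}$ and using $\lambda_i\le C i$ together with $i^{-2}-(i+1)^{-2}=O(i^{-3})$. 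Kolmogorov's theorem then gives $p^{-1}\sum_{i=1}^p(Y_i^2-1-\mu_i^2)\stackrel{a.s.}{\longrightarrow}0$, which rearranges to the statement. Equivalently, one may realize $Z^{(p)}\dis\lambda_p+2\sqrt{\lambda_p}\,W_1+W_1^2+\chi^2_{p-1}$ with $W_1\sim\mathcal{N}(0,1)$ independent of the central $\chi^2_{p-1}$ term, after which each summand divided by $p$ tends to $0$ almost surely, the $\chi^2_{p-1}/p$ piece by the ordinary SLLN.

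For part (b) I would set $s_p^2=2p+4\lambda_p$ and observe that the normalized statistic is exactly $(Z^{(p)}-p-\lambda_p)/s_p$. Since $Z^{(p)}\sim\chi^2_{p,\lambda_p}$ has moment generating function $M_p(t)=(1-2t)^{-p/2}\exp\{\lambda_p t/(1-2t)\}$ for $t<1/2$, the mgf of $(Z^{(p)}-p-\lambda_p)/s_p$ at a fixed $t$ equals $\exp\{-t(p+\lambda_p)/s_p\}\,M_p(t/s_p)$, which is finite once $s_p>2|t|$, hence eventually in $p$. Expanding $\log M_p(u)=-\tfrac{p}{2}\log(1-2u)+\lambda_p u/(1-2u)$ around $u=0$ gives $\log M_p(u)=(p+\lambda_p)u+(p+2\lambda_p)u^2+R_p(u)$ with $|R_p(u)|\le C(p+\lambda_p)|u|^3$ for $|u|\le\tfrac14$. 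Substituting $u=t/s_p$, the linear terms cancel against $-t(p+\lambda_p)/s_p$, the quadratic term is exactly $(p+2\lambda_p)t^2/s_p^2=t^2/2$, and the remainder is $O((p+\lambda_p)s_p^{-3})=O((p+\lambda_p)^{-1/2})\to0$ since $s_p^2\ge 2(p+\lambda_p)$. Hence the log-mgf tends to $t^2/2$ for every $t$ in a neighborhood of $0$, and the continuity theorem for moment generating functions yields $(Z^{(p)}-p-\lambda_p)/s_p\stackrel{d.}{\longrightarrow}\mathcal{N}(0,1)$, which is the claim.

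The step I expect to be the real obstacle is part (b): a direct Lindeberg--Feller or Lyapunov CLT for the array $(Y_i^2)_{1\le i\le p}$ fails, because when the noncentrality is concentrated in few coordinates — for instance $\mu_1^2\asymp p$, which is still compatible with $\lambda_p/p$ bounded — a single summand $Y_1^2$ carries a non-vanishing fraction of the total variance $s_p^2$, so the Lindeberg condition cannot hold. The mgf route circumvents this because it uses only the exact law $\chi^2_{p,\lambda_p}$; the alternative is to peel off that coordinate, writing $Z^{(p)}\dis(\sqrt{\lambda_p}+W_1)^2+\chi^2_{p-1}$, and to combine the exact normality of $2\sqrt{\lambda_p}\,W_1$ with the classical CLT for the independent central $\chi^2_{p-1}$ via Slutsky's lemma. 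The remaining ingredients — the Abel-summation bound in (a) and the Taylor expansion in (b) — are routine.
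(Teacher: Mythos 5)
Your proof is correct, and part (b) takes a genuinely different route from the paper's. For (a) the two arguments are essentially the same (the paper splits $Y_i^2=(Y_i-\mu_i)^2+2(Y_i-\mu_i)\mu_i+\mu_i^2$ and applies the strong law termwise; your Kolmogorov-variance-criterion version with the Abel summation bound on $\sum_i\mu_i^2/i^2$ is a clean packaging of the same idea). For (b), however, the paper does exactly what you flag as the obstacle: it applies the Lyapunov central limit theorem with fourth moments to the independent summands $Y_i^2\sim\chi^2_{1,\mu_i^2}$, and your concern is well founded. The quantity $48(1+4\mu_i^2)$ appearing in the paper's display \eqref{CLT_Lyap} is the fourth \emph{cumulant} of $\chi^2_{1,\mu_i^2}$, not the fourth central moment, which equals $12(1+2\mu_i^2)^2+48(1+4\mu_i^2)$; the omitted term contributes $48\sum_i\mu_i^4$ to the Lyapunov numerator, and when the noncentrality is concentrated in one coordinate ($\mu_1^2=\lambda_p\asymp p$, still compatible with $\lambda_p/p$ bounded) this is of order $p^2$, the same order as the denominator $s_p^4$ with $s_p^2=2p+4\lambda_p$, so the Lyapunov ratio does not vanish. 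In that regime a single summand carries a fixed fraction of the total variance, uniform asymptotic negligibility fails, and no Lindeberg-type theorem can apply; the limit is nevertheless normal because that summand is itself $\lambda_p+2\sqrt{\lambda_p}W_1+W_1^2$ with the dominant part exactly Gaussian. Your mgf expansion (or, equivalently, the decomposition $Z^{(p)}\dis(\sqrt{\lambda_p}+W_1)^2+\chi^2_{p-1}$ combined with Slutsky's lemma) uses only the law $\chi^2_{p,\lambda_p}$ and therefore treats all configurations of $\bol{\mu}$ with $\lambda_p=O(p)$ uniformly. This is not a hypothetical improvement: the noncentrality $\lambda_j(\bC)=\omega_{jj}^{-1}\bd_j^\top\bC\bd_j$ arising in the proof of Theorem \ref{th3}(b) is of rank one, i.e., precisely the concentrated case, so your argument closes a genuine gap in the paper's verification while reaching the same (correct) conclusion.
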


\vspace{0.5cm}
\begin{proof}
\begin{enumerate}[(a)]
\item It holds that
\begin{eqnarray*}
\frac{Z}{p}=\frac{1}{p}\sum_{i=1}^p Y_i^2=\frac{1}{p}\sum_{i=1}^p (Y_i-\mu_i)^2+\frac{1}{p}\sum_{i=1}^p (Y_i-\mu_i)\mu_i+\frac{\lambda}{p}\,.
\end{eqnarray*}
Since $Y_i-\mu_i\sim \mathcal{N}(0,1)$ from the law of large numbers we get that $\frac{1}{p}\sum_{i=1}^p (Y_i-\mu_i)^2 -1 \stackrel{a.s.}{\longrightarrow} 0$ as $p \rightarrow \infty$. Furthermore, it holds that $\sum_{i=1}^p (Y_i-\mu_i)\mu_i \sim \mathcal{N}(0,\lambda)$ and, consequently
\begin{equation*}
\frac{1}{p}\sum_{i=1}^p (Y_i-\mu_i)\mu_i \stackrel{a.s.}{\longrightarrow} 0 ~~\text{as}~~p \rightarrow \infty \,,
\end{equation*}
since $\lim_{p\rightarrow \infty}\lambda/p < \infty$. This completes the proof of the statement of Lemma \ref{lem3}.(a).

\item We get $Y_i^2 \sim \chi^2_{1,\mu_i^2}$, ${\rm E}(Y_i^2)=1+\mu_i^2$, ${\rm Var}(Y_i)=2(1+2\mu_i^2)$, and ${\rm E}(Y_i-1-\mu_i)^4=48(1+4 \mu_i^2)$. This leads to
\begin{equation}\label{CLT_Lyap}
\lim_{p \rightarrow \infty} \frac{\sum_{i=1}^p {\rm E}(Y_i-1-\mu_i)^4}{(\sum_{i=1}^p {\rm Var}(Y_i))^2}
=\lim_{p \rightarrow \infty} \frac{48 p\left(1+4\frac{\lambda}{p}\right)}{4p^2\left(1+2\frac{\lambda}{p}\right)^2}=0
\end{equation}
Then, an application of the Lyapunov central limit theorem (see, e.g., \citet[p. 362]{Billingsley1995}) gives
\begin{equation*}
\frac{\mathbf{Y}^\top \mathbf{Y}-p-\lambda}{\sqrt{2p\left(1+2\frac{\lambda}{p}\right)}}=
\frac{Z-p-\lambda}{\sqrt{2p\left(1+2\frac{\lambda}{p}\right)}}=
\sqrt{p} \frac{\frac{Z}{p}-1-\frac{\lambda}{p}}{\sqrt{2\left(1+2\frac{\lambda}{p}\right)}}
\stackrel{d.}{\longrightarrow}  \mathcal{N}(0,1)\,.
\end{equation*}
\end{enumerate}
\end{proof}

An application of Lemma \ref{lem3}.(b) leads to
\begin{equation*}
\sqrt{p-1}\frac{\frac{\omega_{jj}\left(\frac{\bv^\top_{21,j}}{v_{jj}}\right)^\top \bQ_{j}^{-1}\left(\frac{\bv^\top_{21,j}}{v_{jj}}\right)}{p-1}
-1-\frac{\lambda_j(\bQ_j^{-1})}{p-1}}
{\sqrt{2+4\frac{\lambda_j(\bQ_j^{-1})}{p-1}}} \stackrel{d.}{\longrightarrow}  \mathcal{N}(0,1)
\end{equation*}

Now, it holds that
\begin{eqnarray*}
\frac{\lambda_j(\bQ_j^{-1})}{p-1}=\frac{\omega_{jj}^{-1}\bd_j^\top \bXi_j^{-1} \bd_j}{p-1}\frac{\bd_j^\top \bQ_j^{-1} \bd_j}{\bd_j^\top \bXi_j^{-1} \bd_j}
=\frac{\lambda_j}{p-1}\frac{\bd_j^\top \bQ_j^{-1} \bd_j}{\bd_j^\top \bXi_j^{-1} \bd_j}\,.
\end{eqnarray*}
Because $\bQ_j^{-1} \sim \mathcal{W}_{p-1}(T-K,\bXi_j^{-1})$ we get (see, Theorem 3.2.8 in \citet{Muirhead})
\begin{equation*}
\frac{\bd_j^\top \bQ_j^{-1} \bd_j}{\bd_j^\top \bXi_j^{-1} \bd_j}
\sim \chi^2_{T-K} \,,
\end{equation*}
and, consequently,
\begin{eqnarray}\label{th3b_ref1}
\frac{\lambda_j(\bQ_j^{-1})}{p-1} \stackrel{a.s.}{\longrightarrow} \frac{\lambda_j}{c}~ \text{for}~~ \frac{p}{T-K} \rightarrow c \in (0,1) ~~ \text{as} ~~ T-K \rightarrow \infty \,.
\end{eqnarray}

Hence, from Slutsky's lemma (see, e.g., Theorem 1.5 in \citet{DasGupta2008}) we obtain
\begin{equation*}
\sqrt{p-1}\left(\frac{\omega_{jj}\left(\frac{\bv^\top_{21,j}}{v_{jj}}\right)^\top \bQ_{j}^{-1}\left(\frac{\bv^\top_{21,j}}{v_{jj}}\right)}{p-1}
-1\right) \stackrel{d.}{\longrightarrow} \mathcal{N}\left(\frac{\lambda_j}{c},2+4\frac{\lambda_j}{c}\right) \,,
\end{equation*}
and thus
\begin{equation*}
\sqrt{p-1}\left(\frac{\omega_{11,j}\left(\frac{\bv^\top_{21,j}}{v_{jj}}\right)^\top \bQ_{j}^{-1}\left(\frac{\bv^\top_{21,j}}{v_{jj}}\right)/(p-1)-\frac{\lambda_j}{c}}
{(\omega_{11,j}/v_{jj})/(T-K-p+1)}-1 \right)\stackrel{d.}{\longrightarrow} \mathcal{N}\left(0,\frac{2}{1-c}+4\frac{\lambda_j}{c}\right) \,.
\end{equation*}

In case of $T-K-p \longrightarrow d \in (0,\infty)$, we get from Lemma \ref{lem3}.(a)
\[
\frac{\omega_{jj}\left(\frac{\bv^\top_{21,j}}{v_{jj}}\right)^\top \bQ_{j}^{-1}\left(\frac{\bv^\top_{21,j}}{v_{jj}}\right)}{p-1}
-1-\frac{\lambda_j(\bQ_j^{-1})}{p-1} \stackrel{a.s.}{\longrightarrow} 0 ~~ \text{as} ~~ p \rightarrow \infty.
\]
Applying (\ref{th3b_ref1}) and $p/(T-K) \rightarrow 1$, we get the statement of the second part of Theorem \ref{th3}.(b).
\end{enumerate}
\end{proof}

\section*{Acknowledgments}
The authors are grateful to Professor Christian Genest, the associate editor and the referees for their suggestions, which have improved the presentation in the paper. We also thank David Bauder for his comments used in the preparation of the revised version of the paper.

\bibliography{Test-FM_ref}
\newpage
\clearpage
\begin{landscape}
\begin{figure}[h!tb]
\begin{center}
\begin{tabular}{ccc}
\scalebox{0.4}{\includegraphics[]{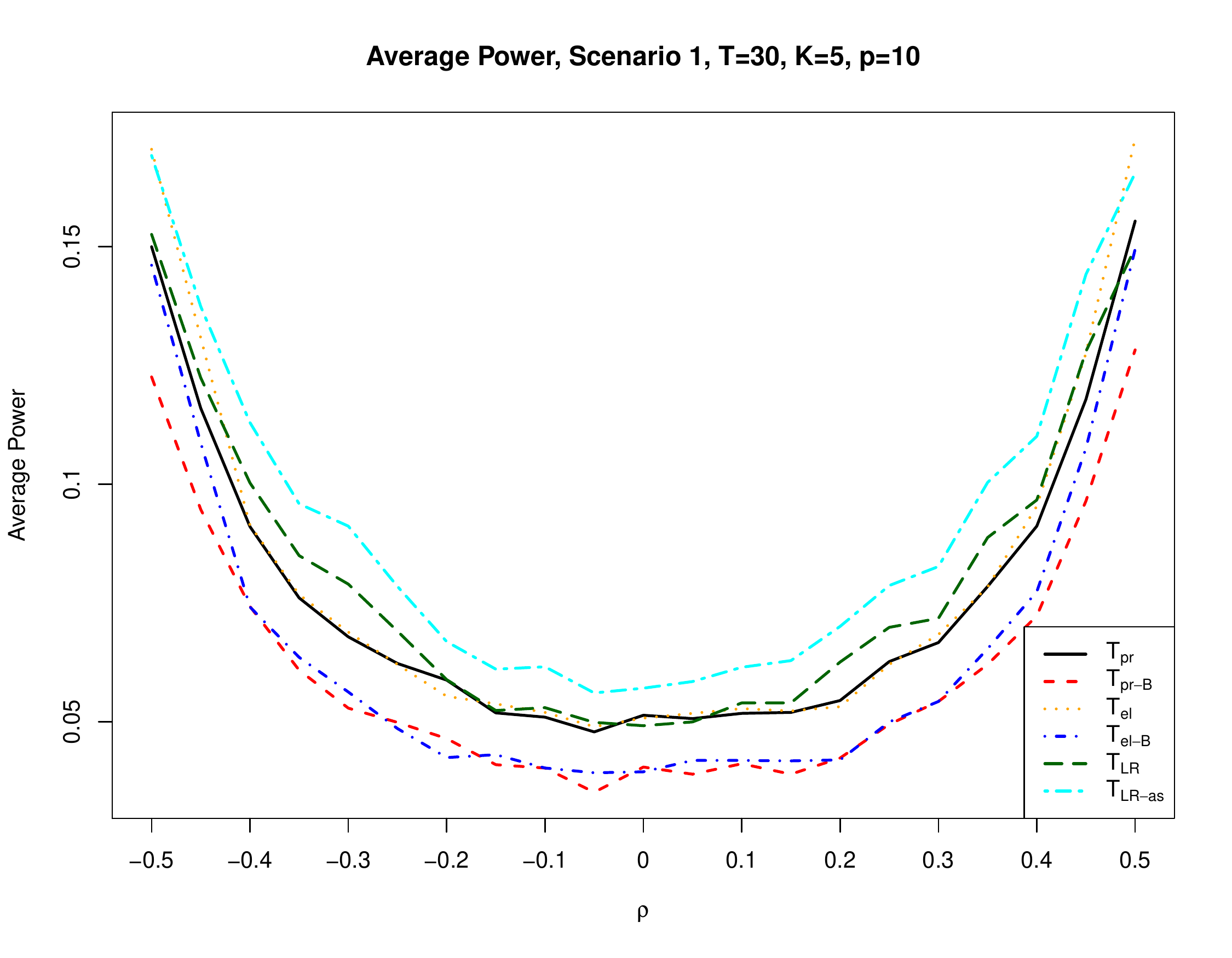}}&&\scalebox{0.4}{\includegraphics[]{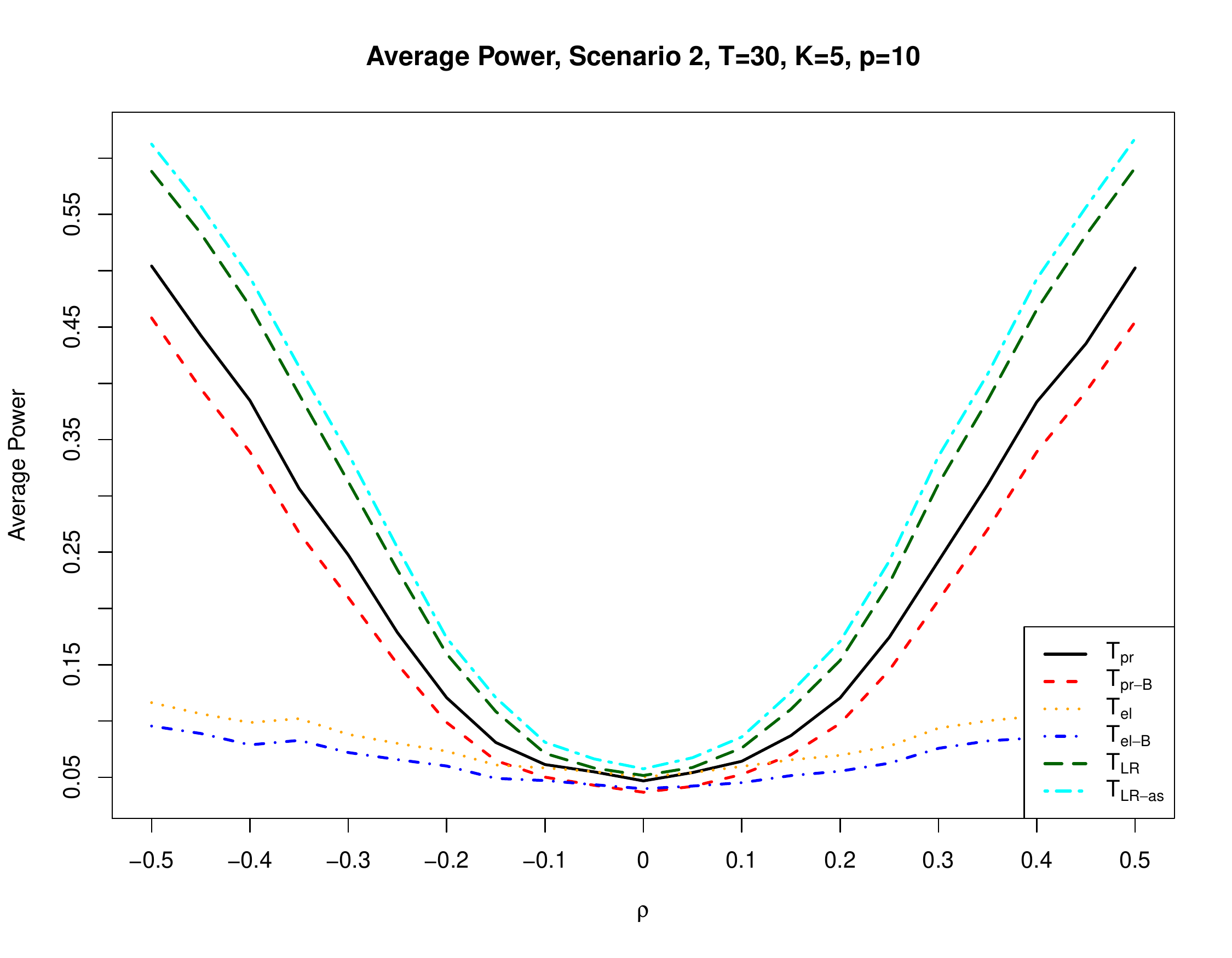}}\\
\scalebox{0.4}{\includegraphics[]{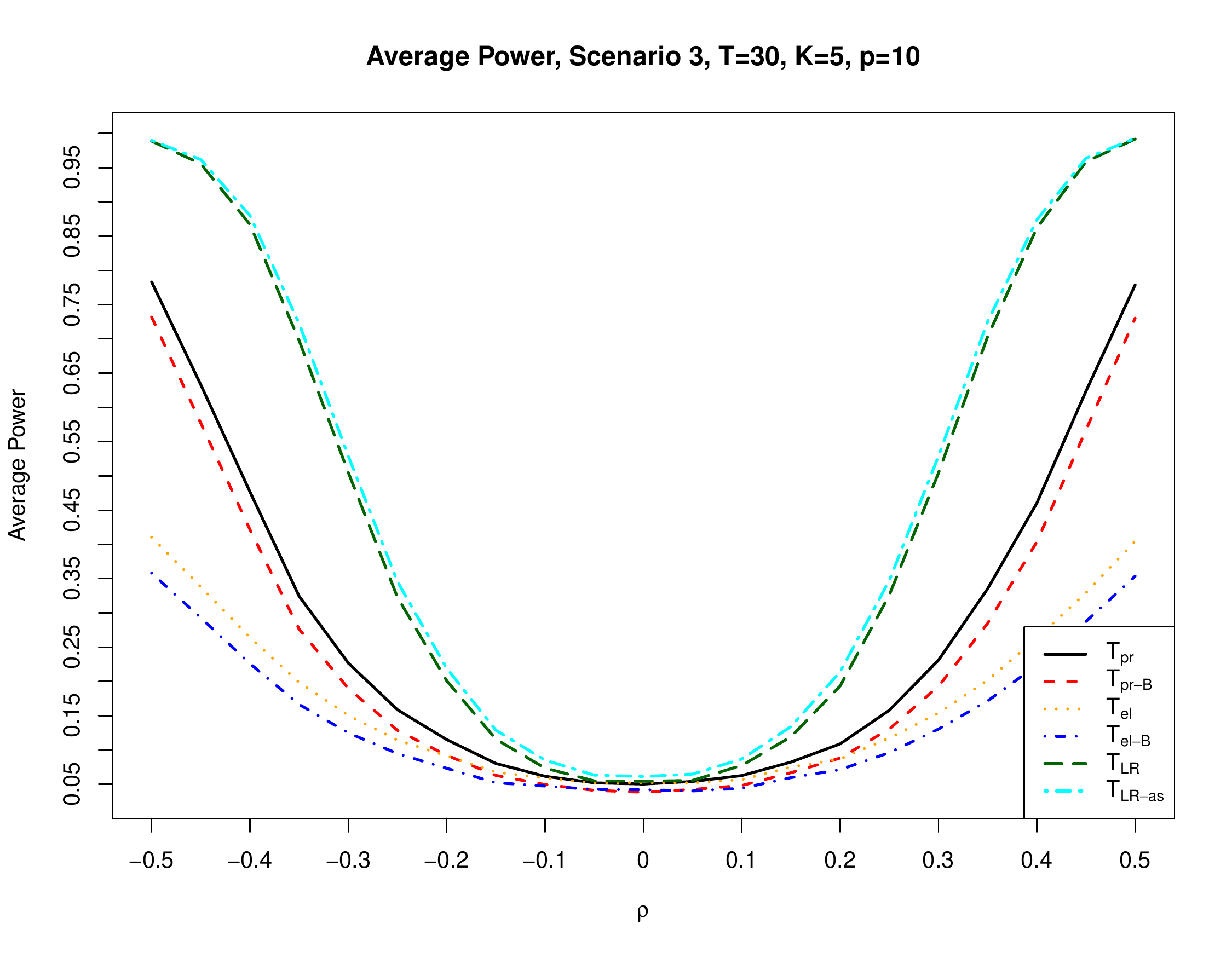}}&&\scalebox{0.4}{\includegraphics[]{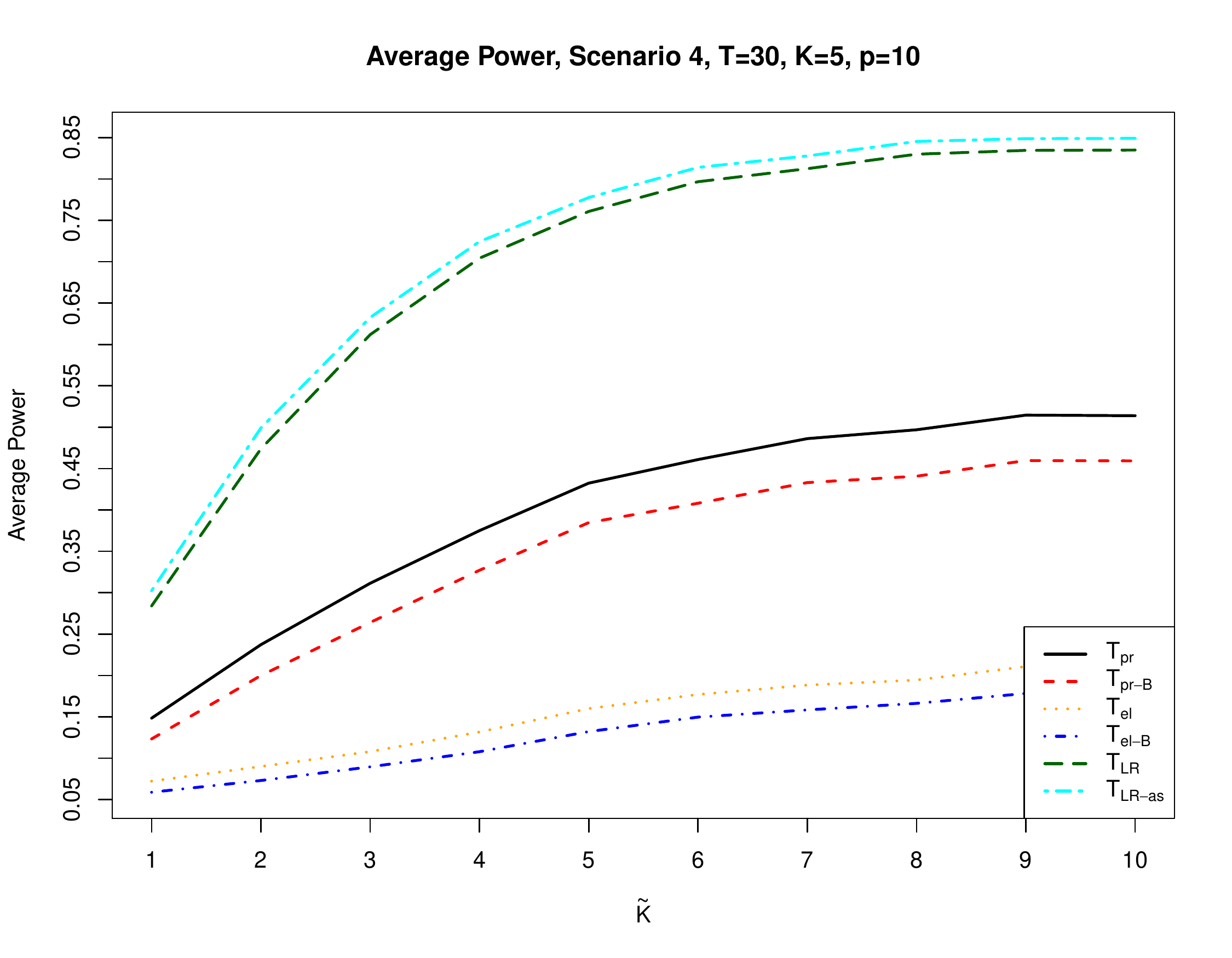}}\\
\end{tabular}
\end{center}
\caption{Power of $T_{el}$, $T_{el}$, and $T_{LR}$ based on the simulated critical values and of the corresponding tests whose critical values are determined by Bonferroni correction or asymptotic distribution ($T=30$, $K=5$, $p=10$). }
\label{Fig:T30K5p10}
\end{figure}
\end{landscape}

\newpage
\clearpage
\begin{landscape}
\begin{figure}[h!tb]
\begin{center}
\begin{tabular}{ccc}
\scalebox{0.4}{\includegraphics[]{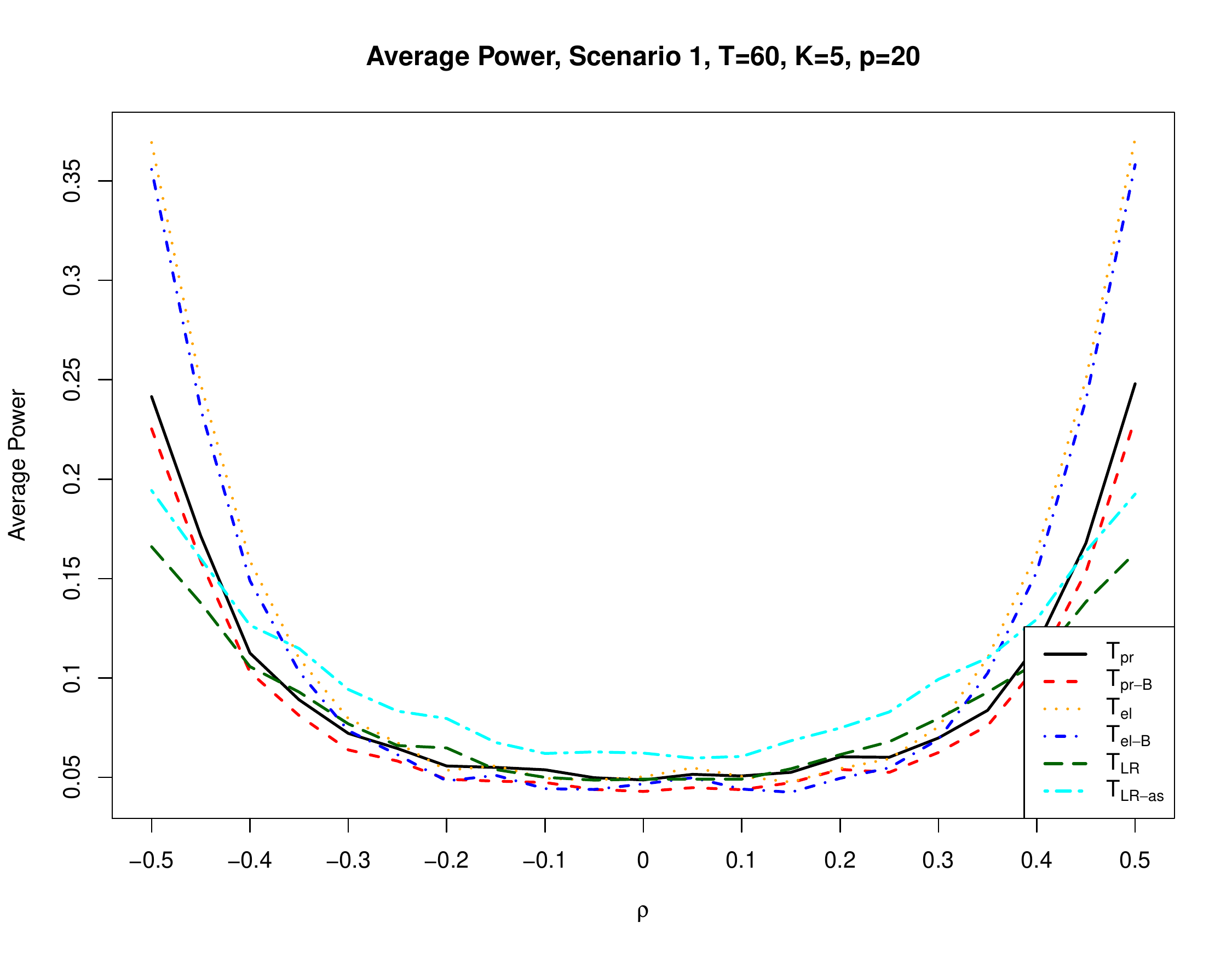}}&&\scalebox{0.4}{\includegraphics[]{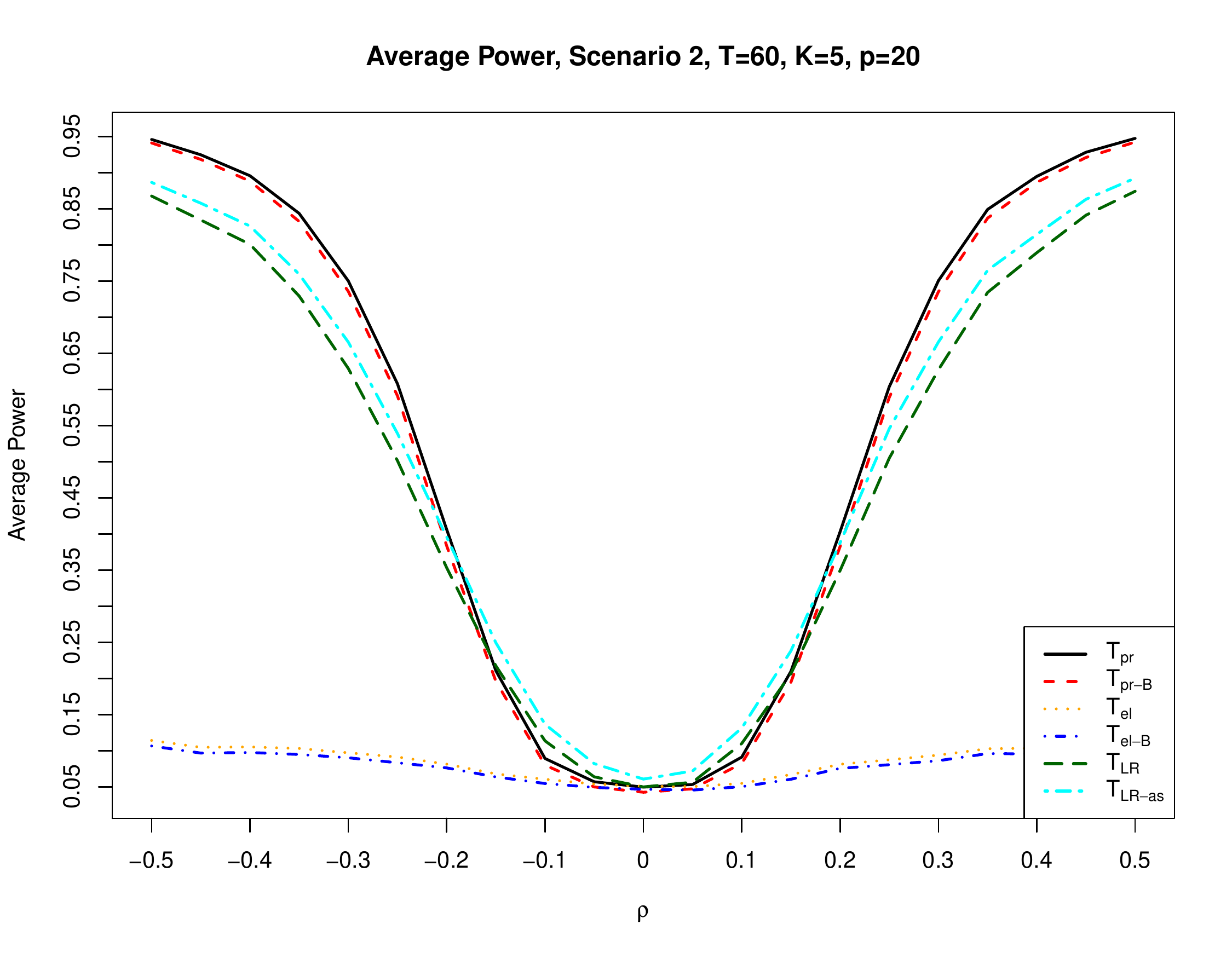}}\\
\scalebox{0.4}{\includegraphics[]{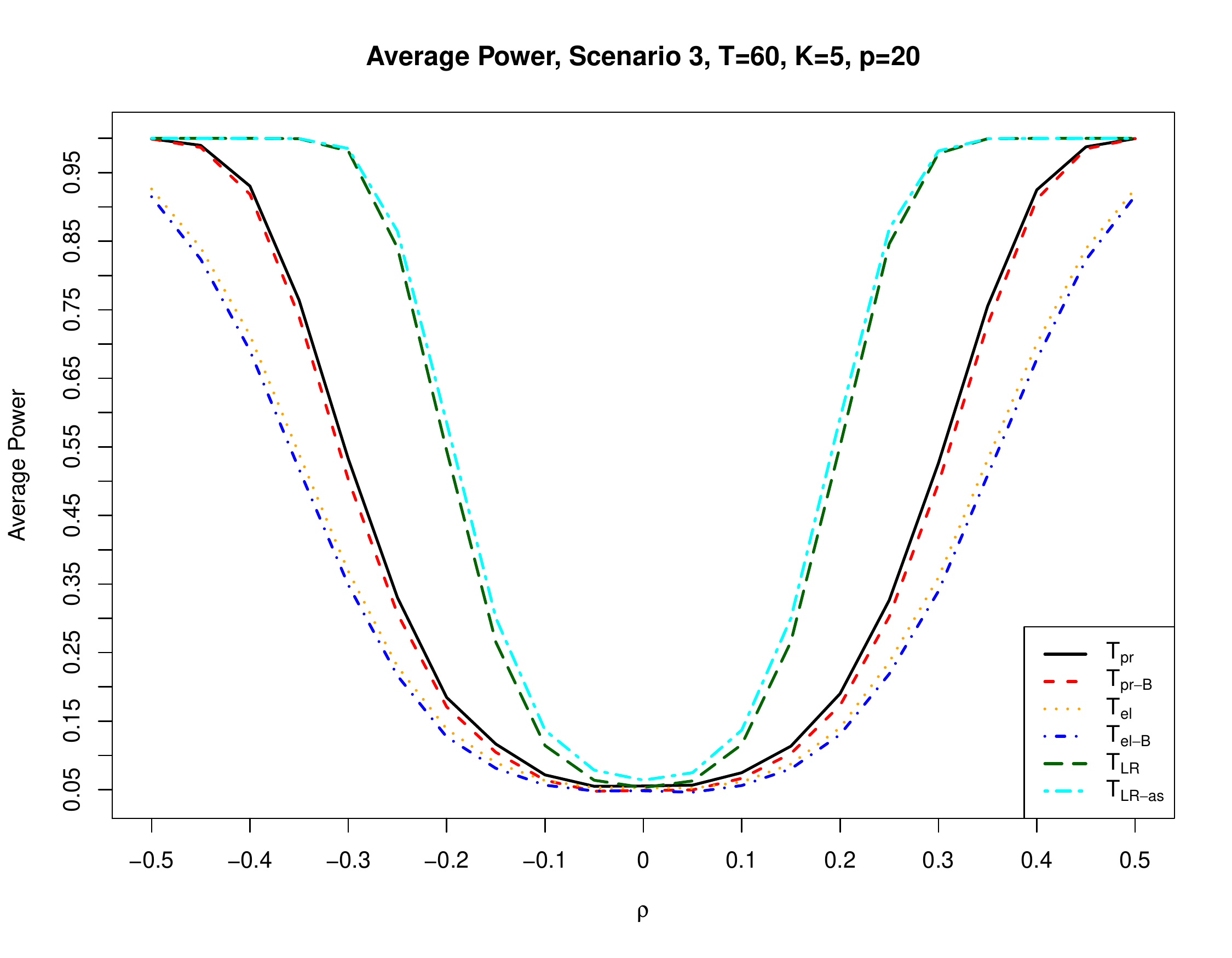}}&&\scalebox{0.4}{\includegraphics[]{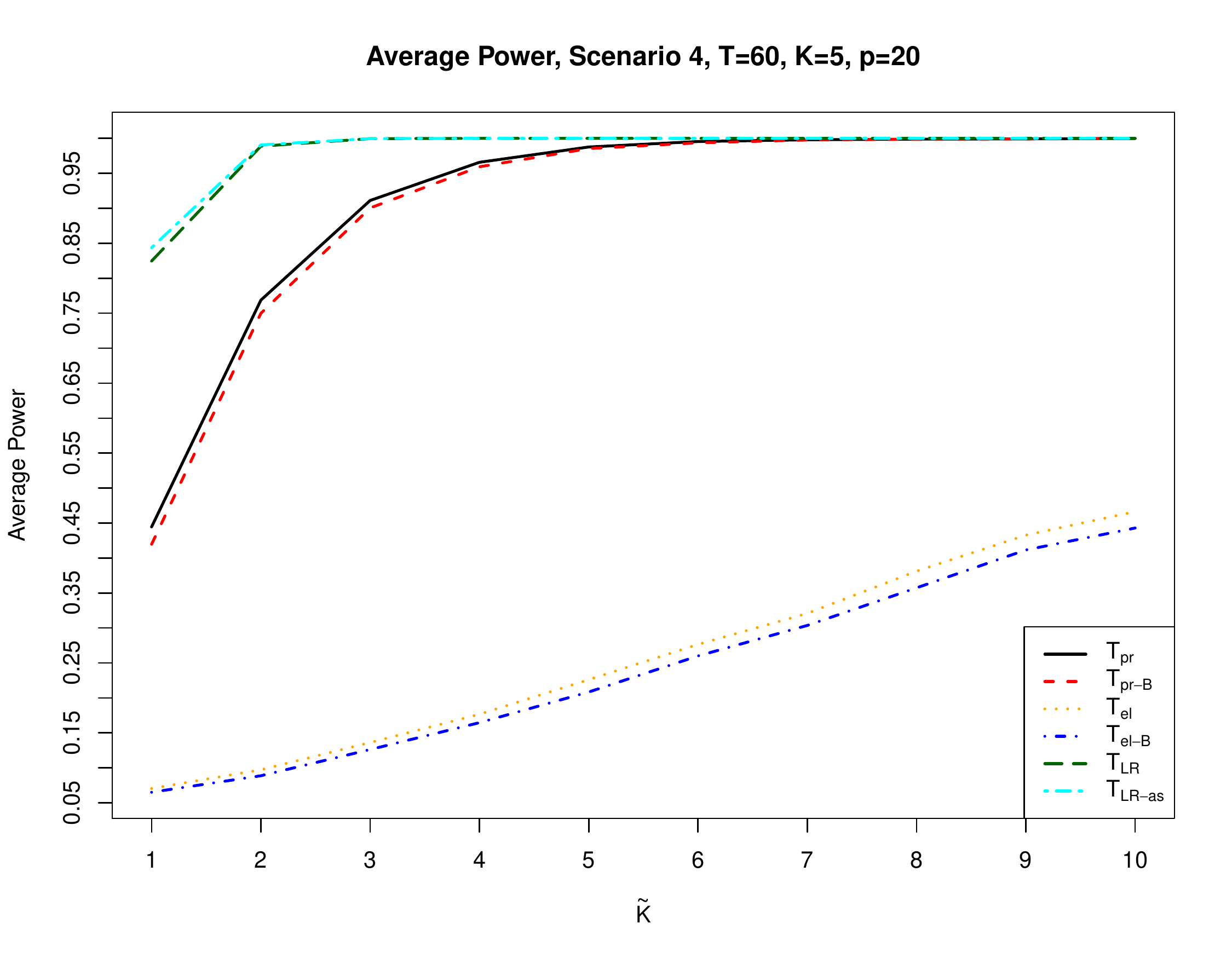}}\\
\end{tabular}
\end{center}
\caption{Power of $T_{el}$, $T_{el}$, and $T_{LR}$ based on the simulated critical values and of the corresponding tests whose critical values are determined by Bonferroni correction or asymptotic distribution ($T=60$, $K=5$, $p=20$). }
\label{Fig:T60K5p20}
\end{figure}
\end{landscape}

\newpage
\clearpage
\begin{landscape}
\begin{figure}[h!tb]
\begin{center}
\begin{tabular}{ccc}
\scalebox{0.4}{\includegraphics[]{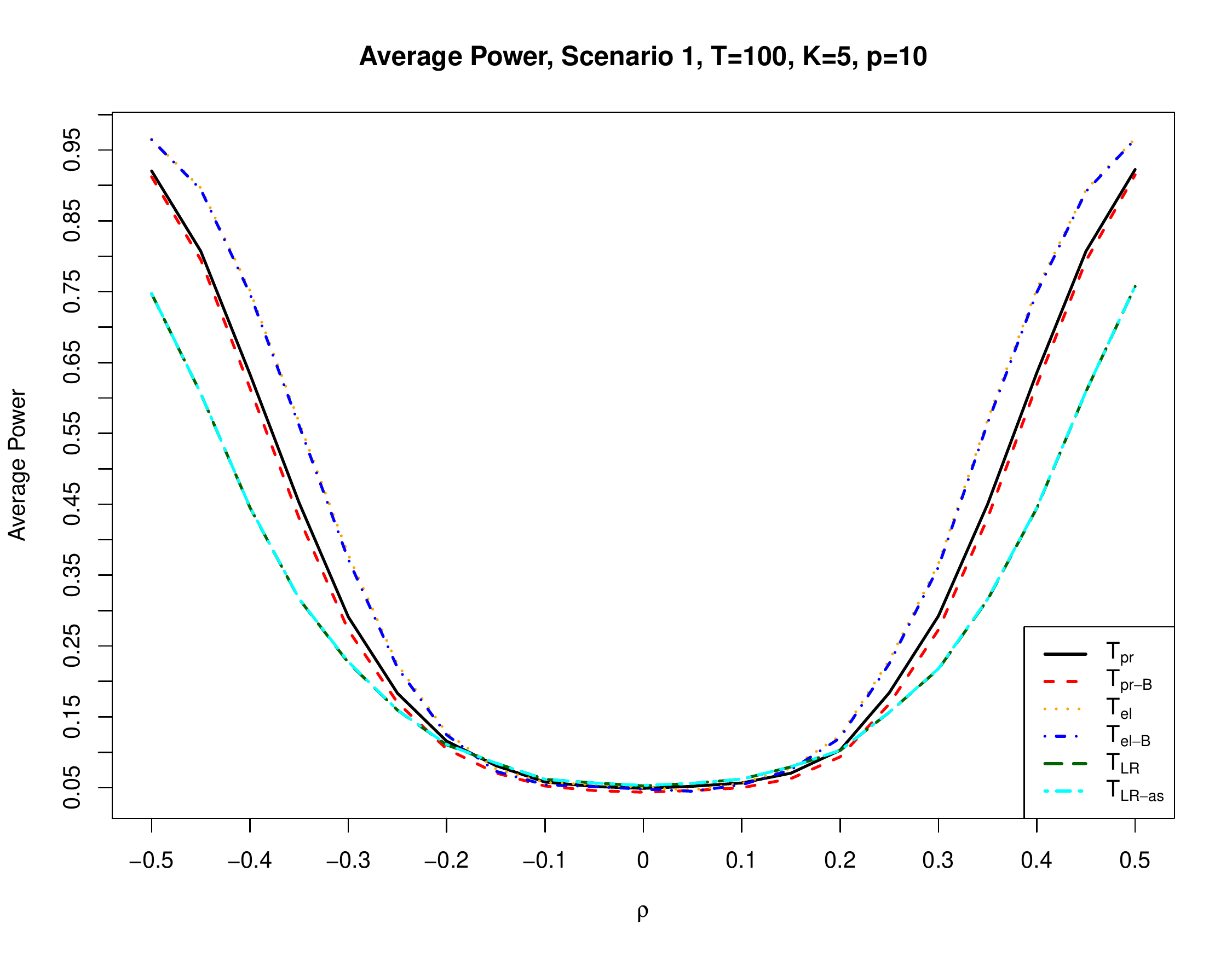}}&&\scalebox{0.4}{\includegraphics[]{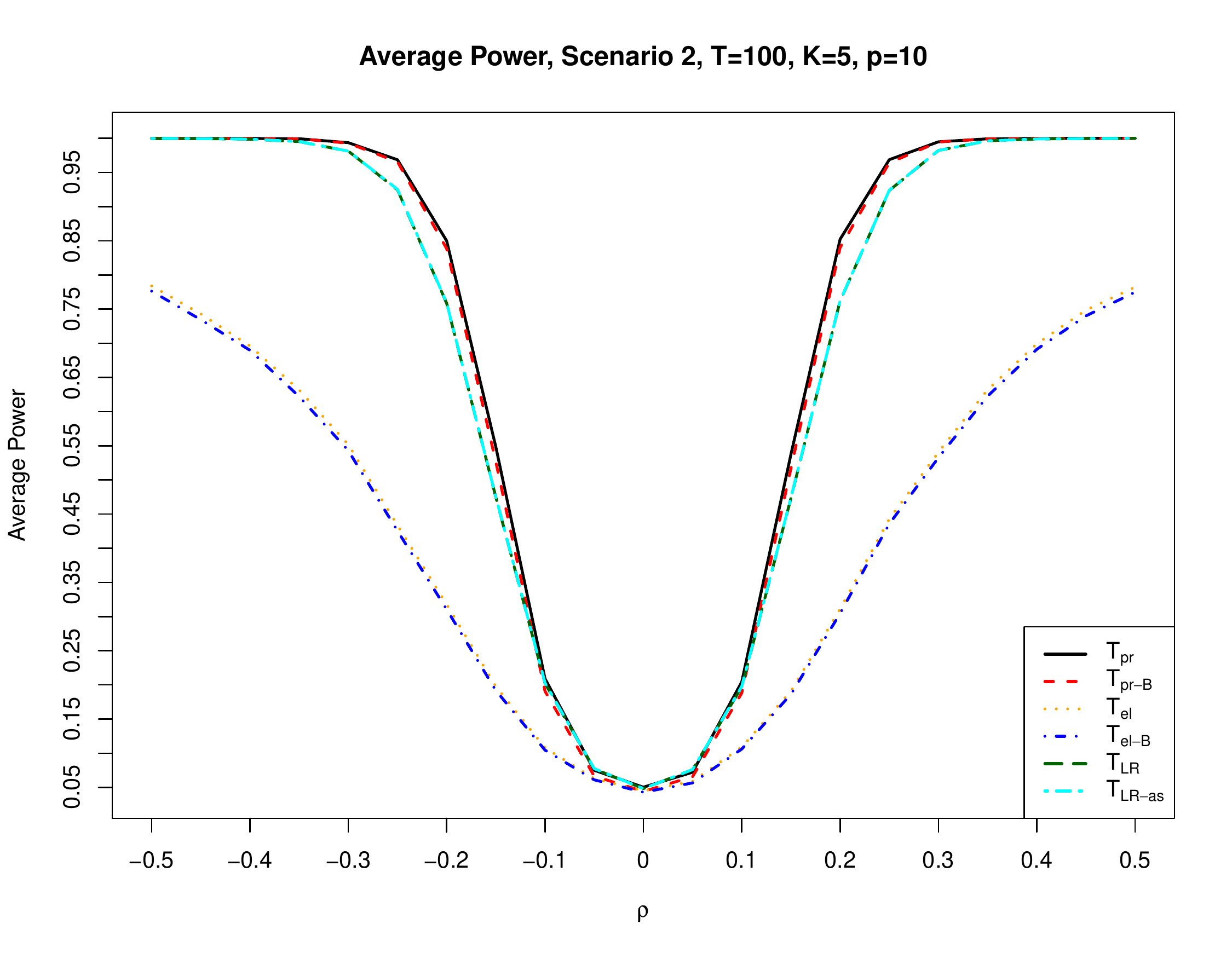}}\\
\scalebox{0.4}{\includegraphics[]{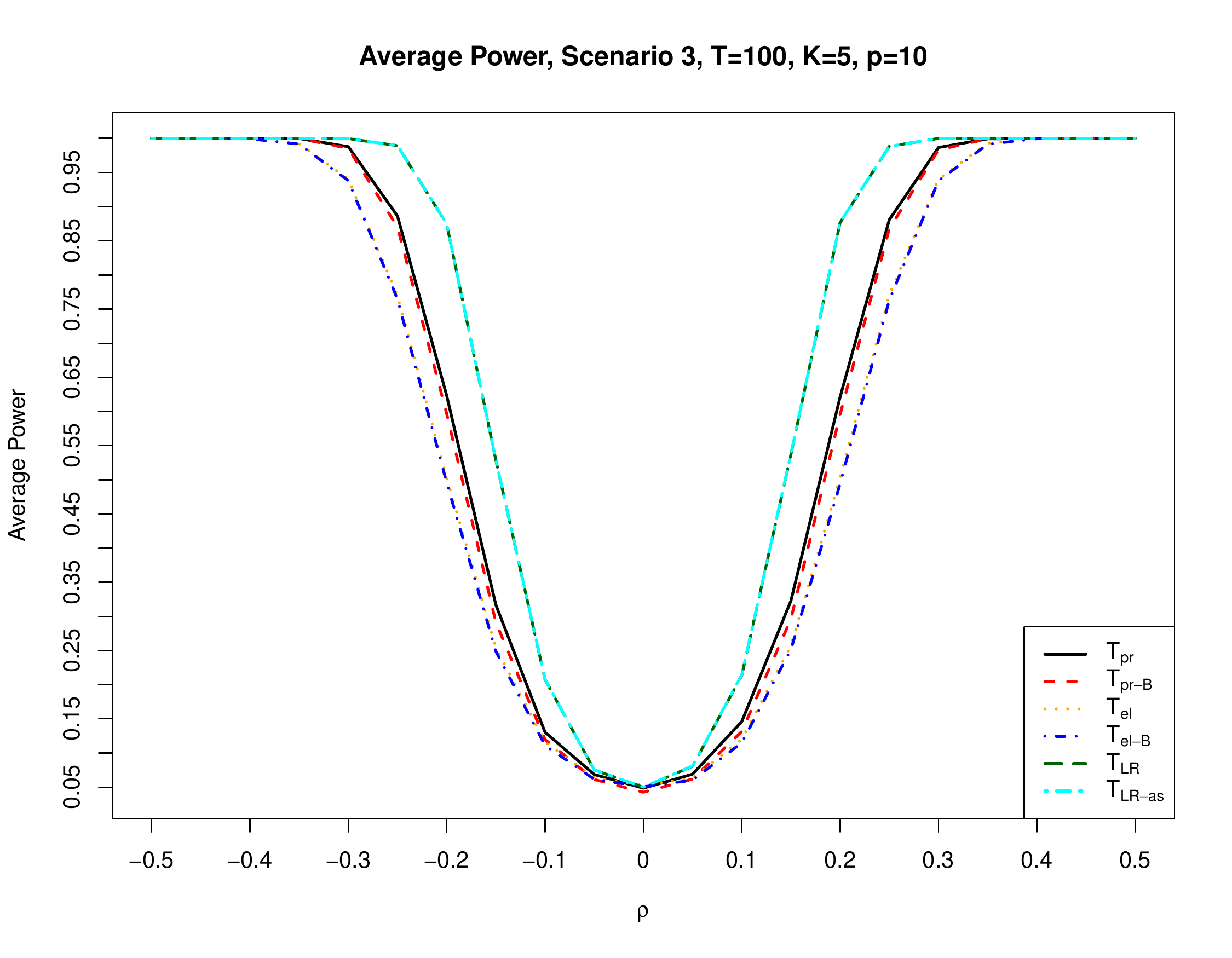}}&&\scalebox{0.4}{\includegraphics[]{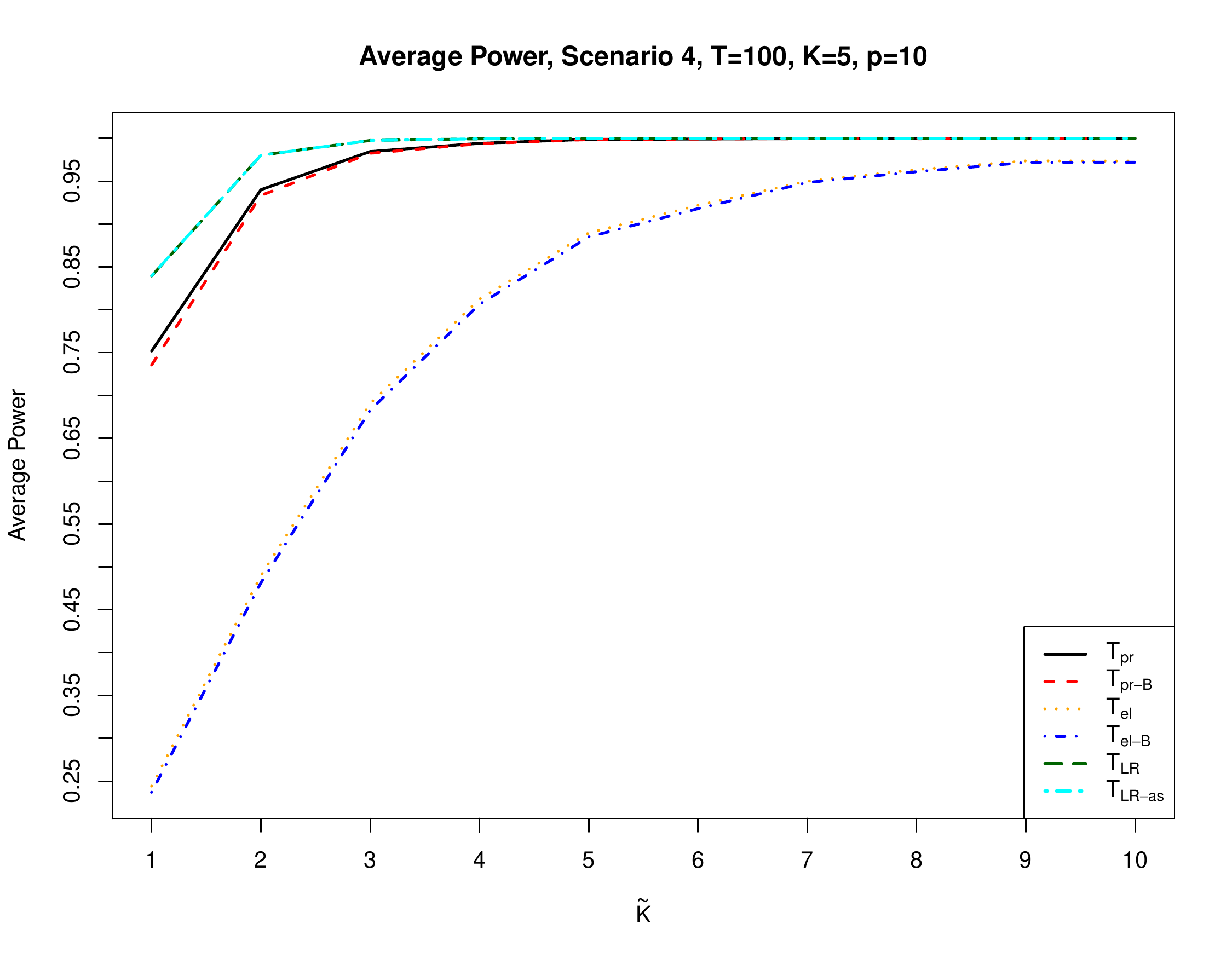}}\\
\end{tabular}
\end{center}
\caption{Power of $T_{el}$, $T_{el}$, and $T_{LR}$ based on the simulated critical values and of the corresponding tests whose critical values are determined by Bonferroni correction or asymptotic distribution ($T=100$, $K=5$, $p=10$). }
\label{Fig:T100K5p10}
\end{figure}
\end{landscape}

\newpage
\clearpage
\begin{landscape}
\begin{figure}[h!tb]
\begin{center}
\begin{tabular}{ccc}
\scalebox{0.4}{\includegraphics[]{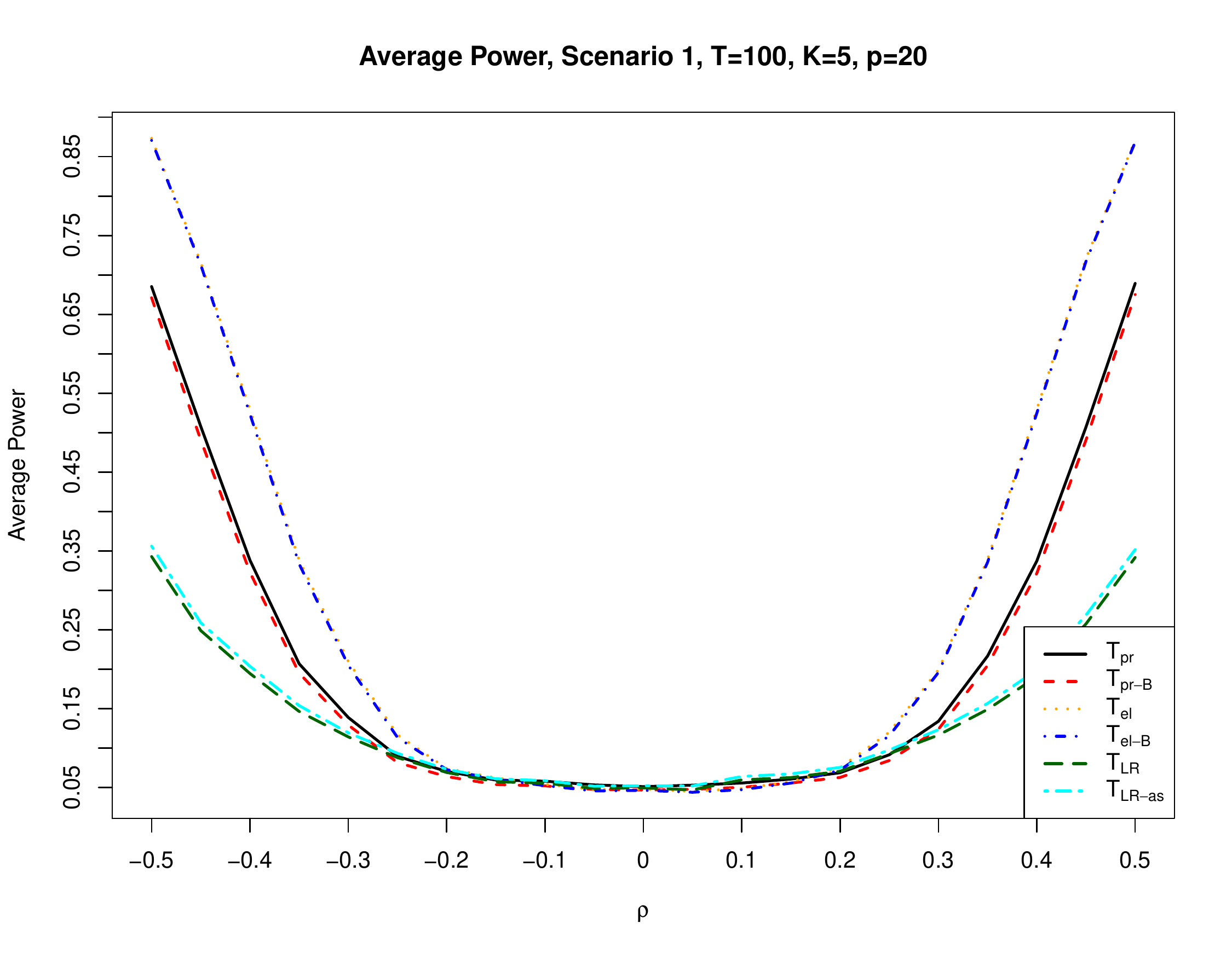}}&&\scalebox{0.4}{\includegraphics[]{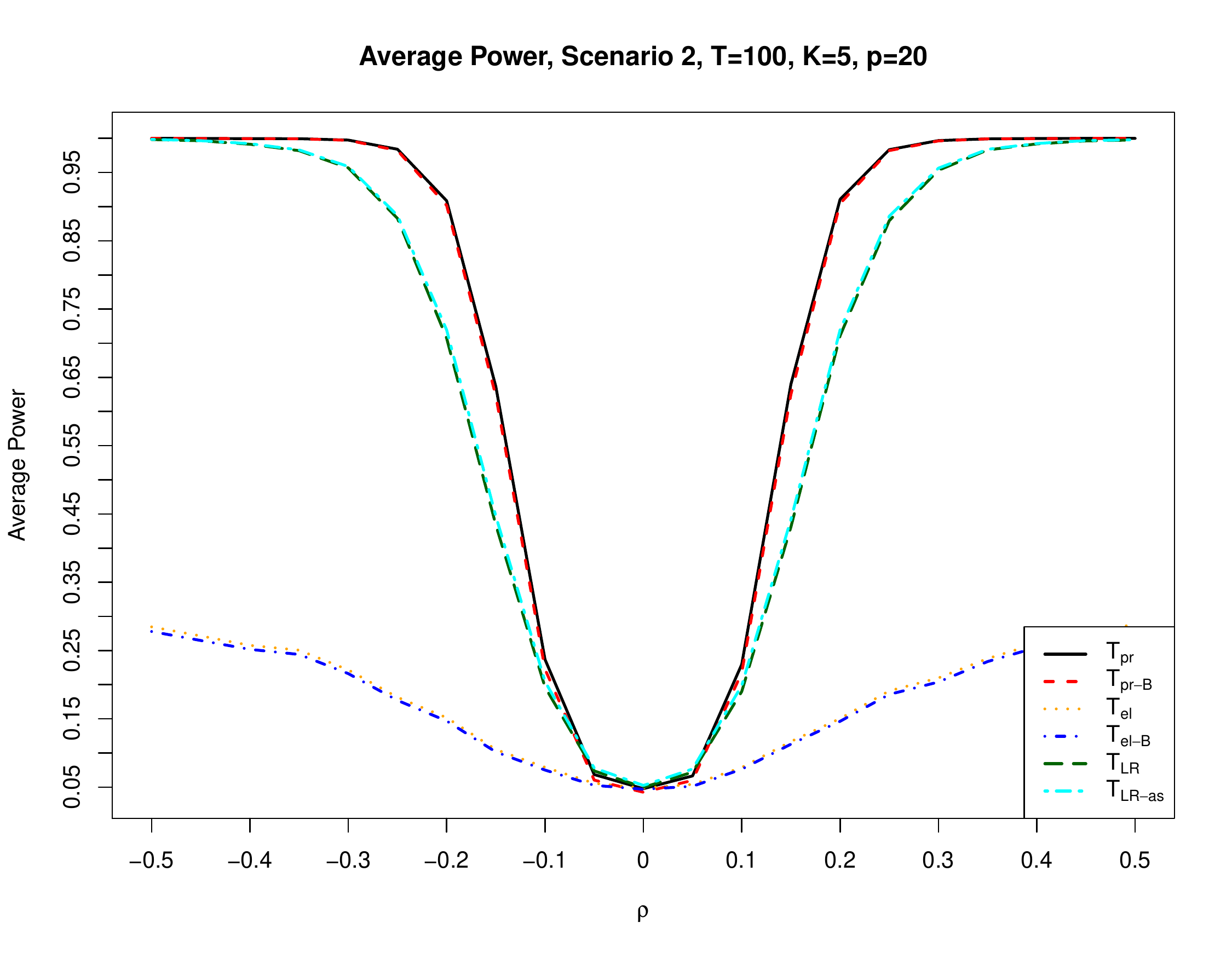}}\\
\scalebox{0.4}{\includegraphics[]{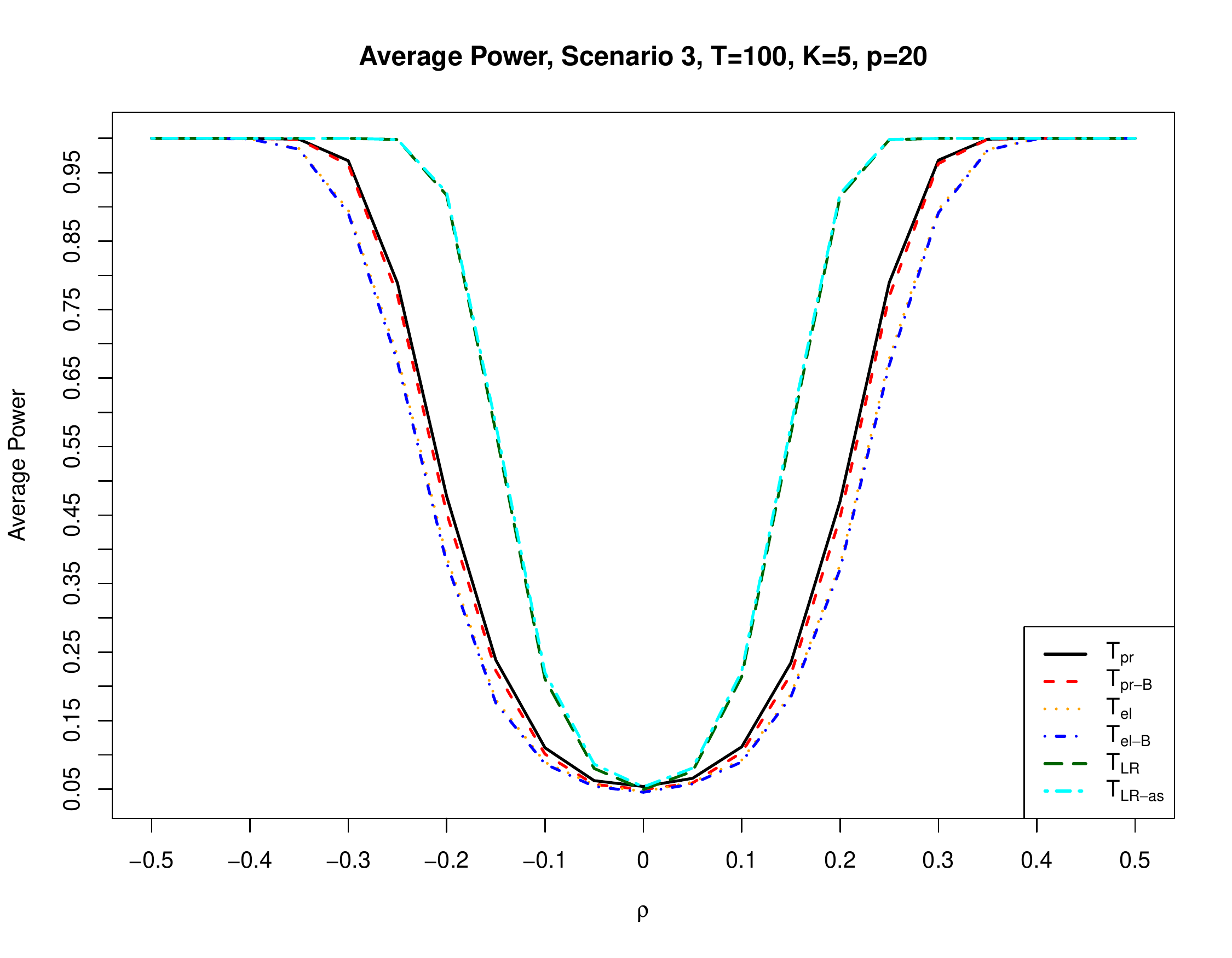}}&&\scalebox{0.4}{\includegraphics[]{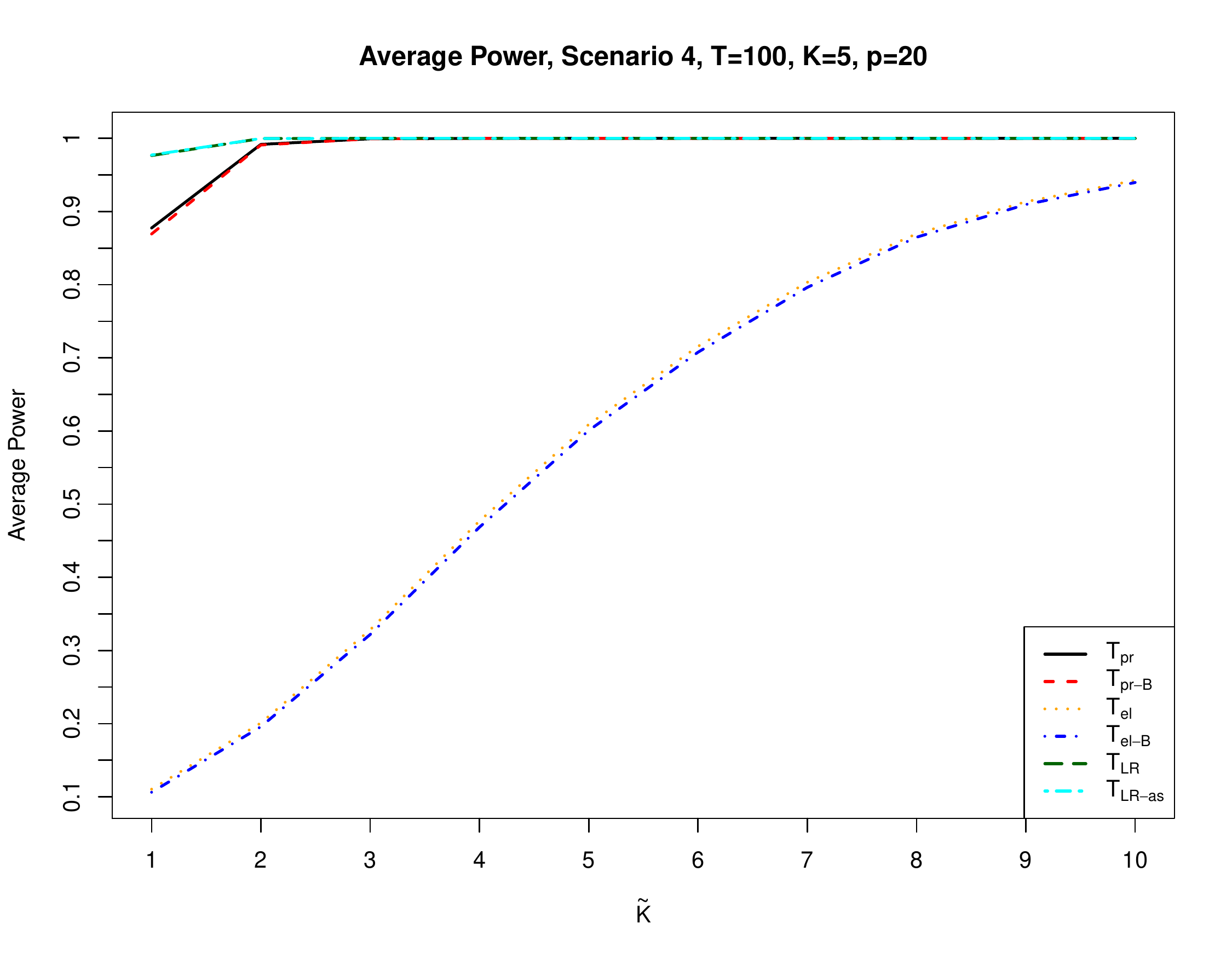}}\\
\end{tabular}
\end{center}
\caption{Power of $T_{el}$, $T_{el}$, and $T_{LR}$ based on the simulated critical values and of the corresponding tests whose critical values are determined by Bonferroni correction or asymptotic distribution ($T=100$, $K=5$, $p=20$). }
\label{Fig:T100K5p20}
\end{figure}
\end{landscape}

\newpage
\clearpage
\begin{landscape}
\begin{figure}[h!tb]
\begin{center}
\begin{tabular}{ccc}
\scalebox{0.4}{\includegraphics[]{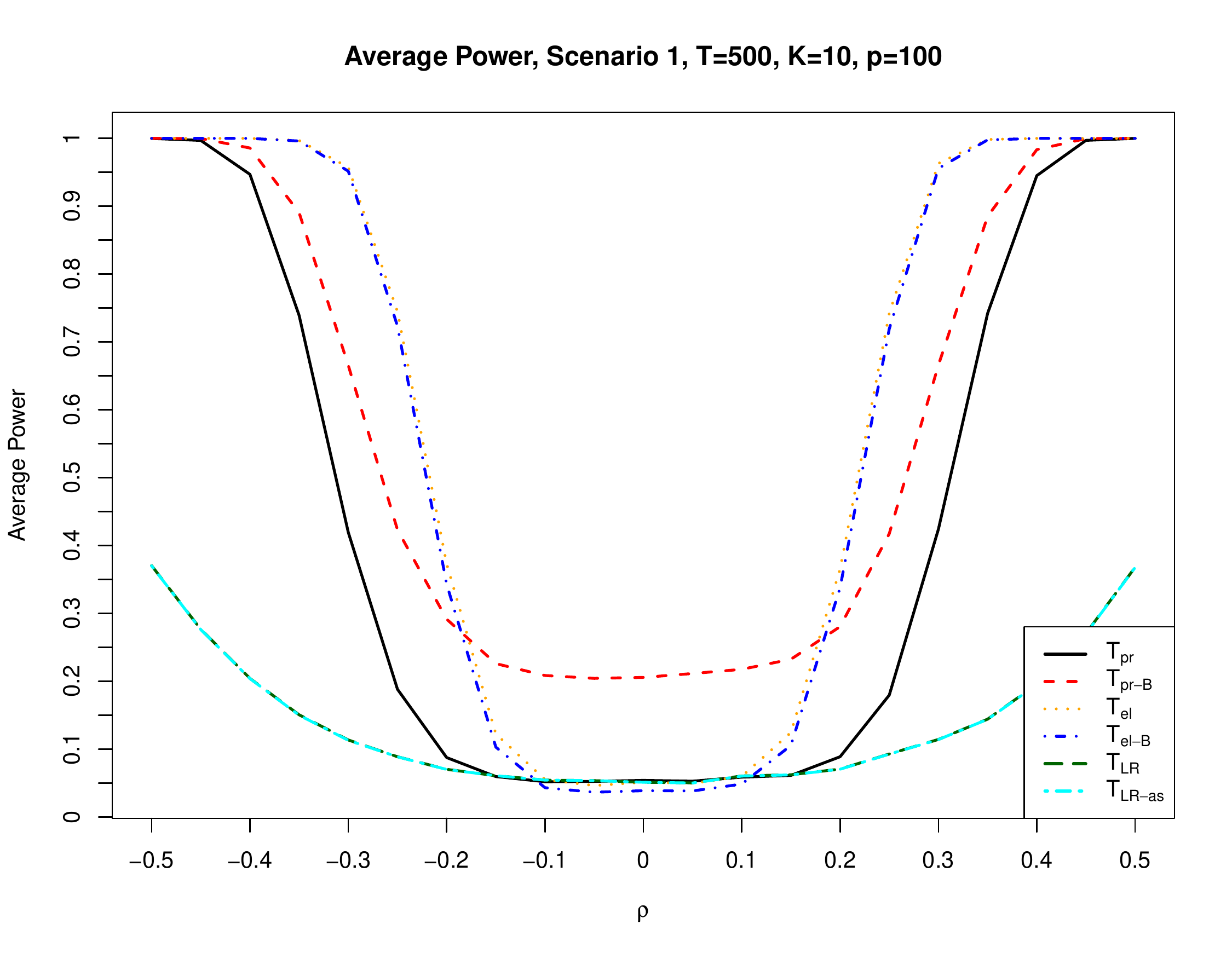}}&&\scalebox{0.4}{\includegraphics[]{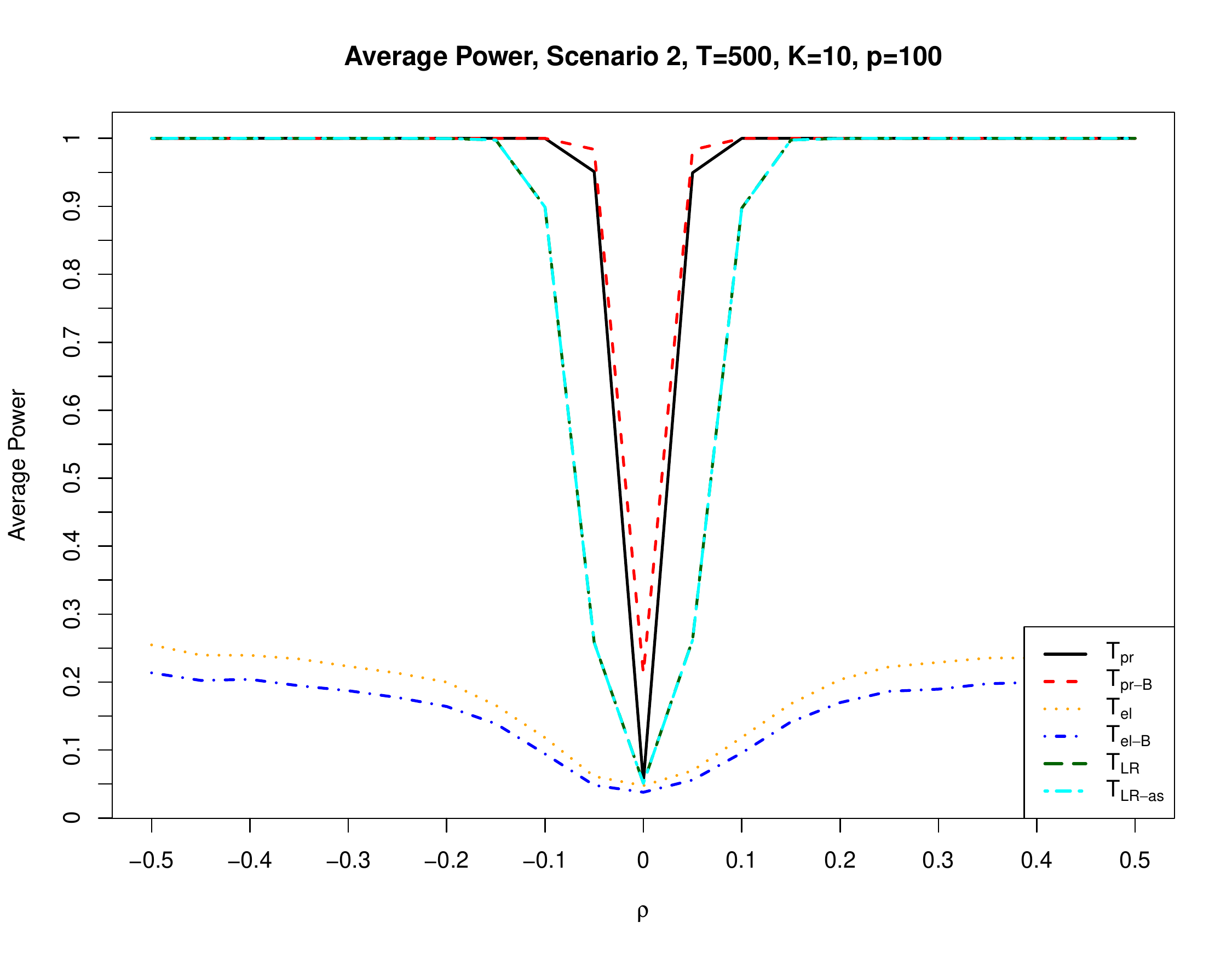}}\\
\scalebox{0.4}{\includegraphics[]{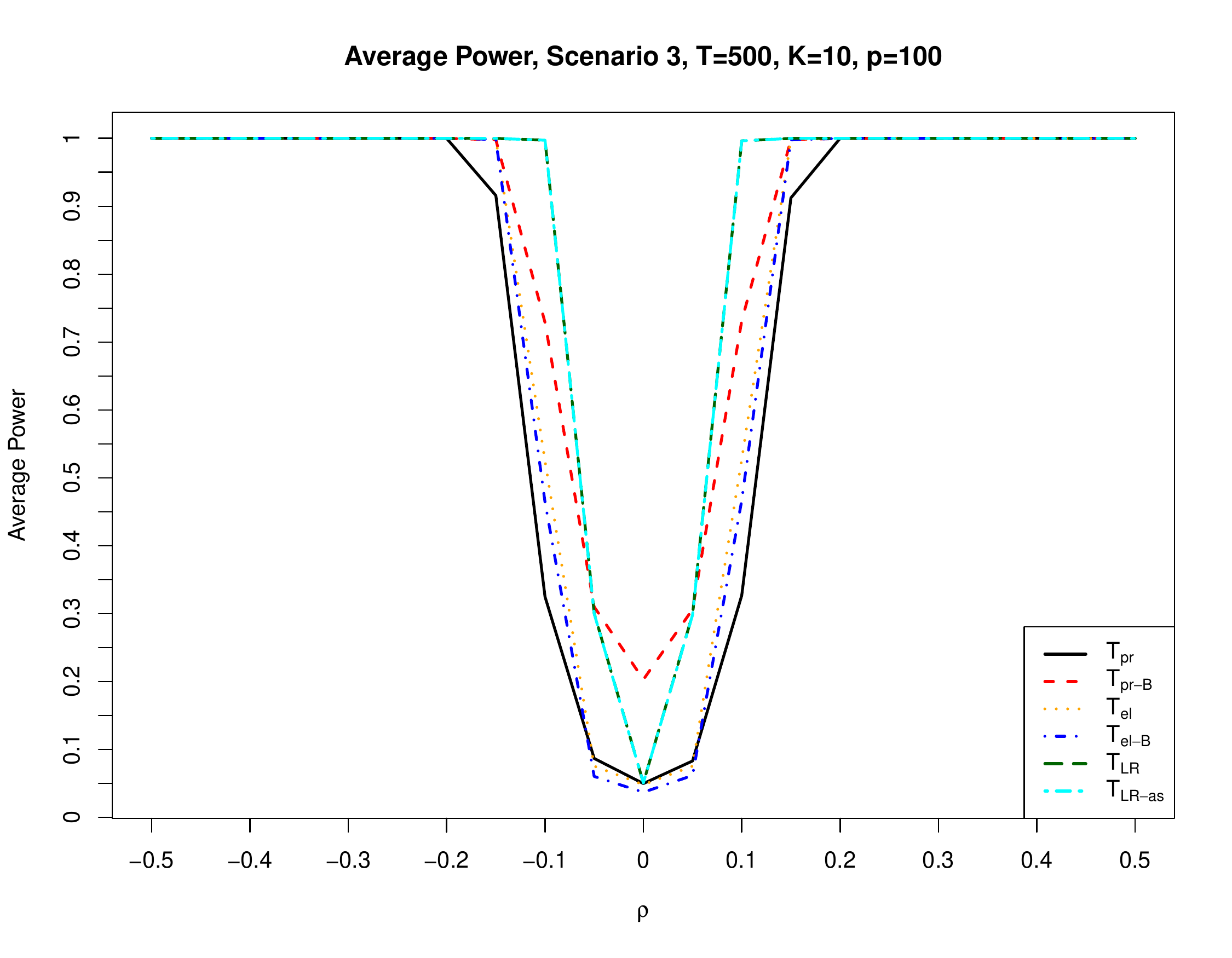}}&&\scalebox{0.4}{\includegraphics[]{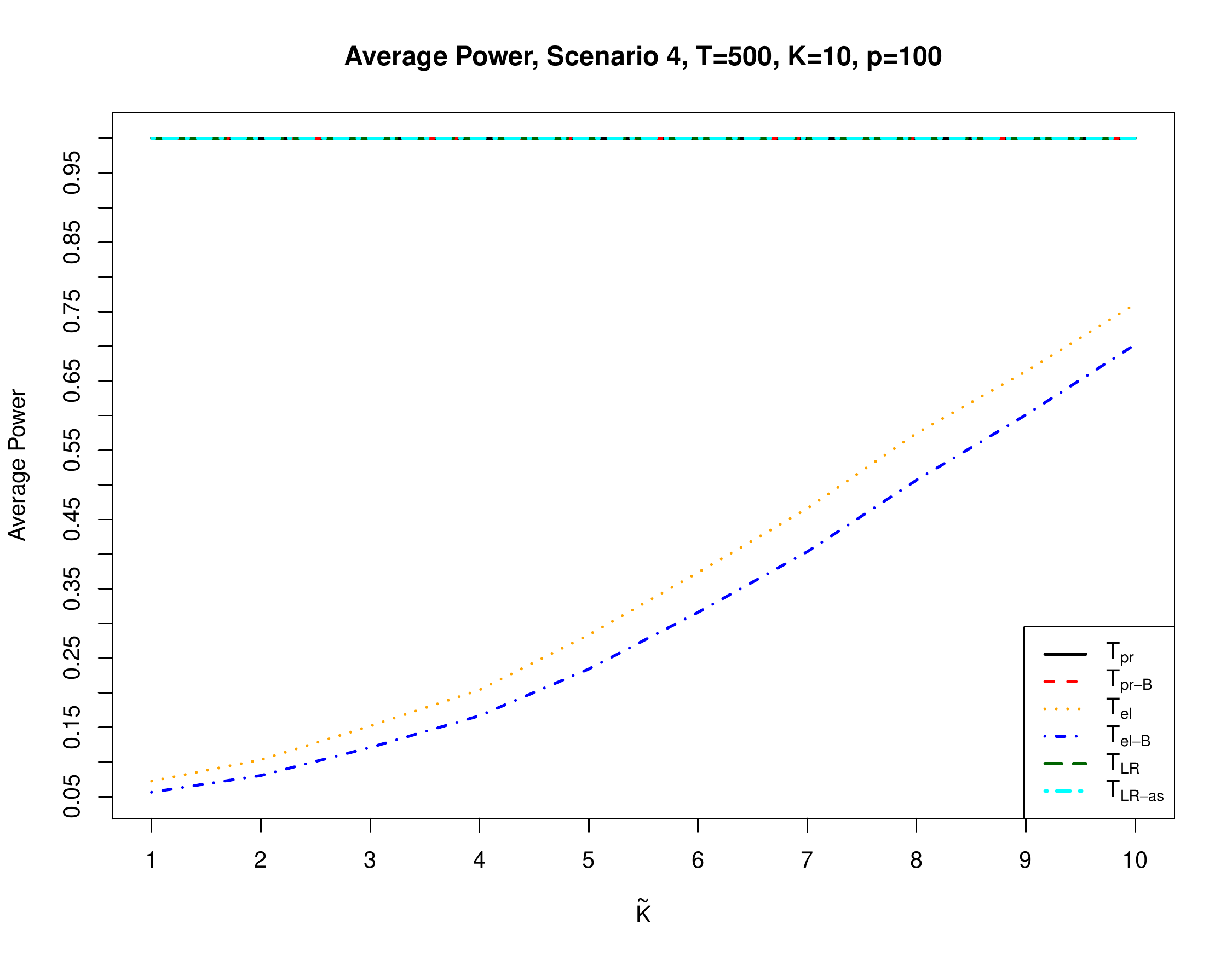}}\\
\end{tabular}
\end{center}
\caption{Power of $T_{el}$, $T_{el}$, and $T_{LR}$ based on the simulated critical values and of the corresponding tests whose critical values are determined by Bonferroni correction or asymptotic distribution ($T=500$, $K=10$, $p=100$). }
\label{Fig:T500K10p100}
\end{figure}
\end{landscape}

\newpage
\clearpage
\begin{landscape}
\begin{figure}[h!tb]
\begin{center}
\begin{tabular}{ccc}
\scalebox{0.4}{\includegraphics[]{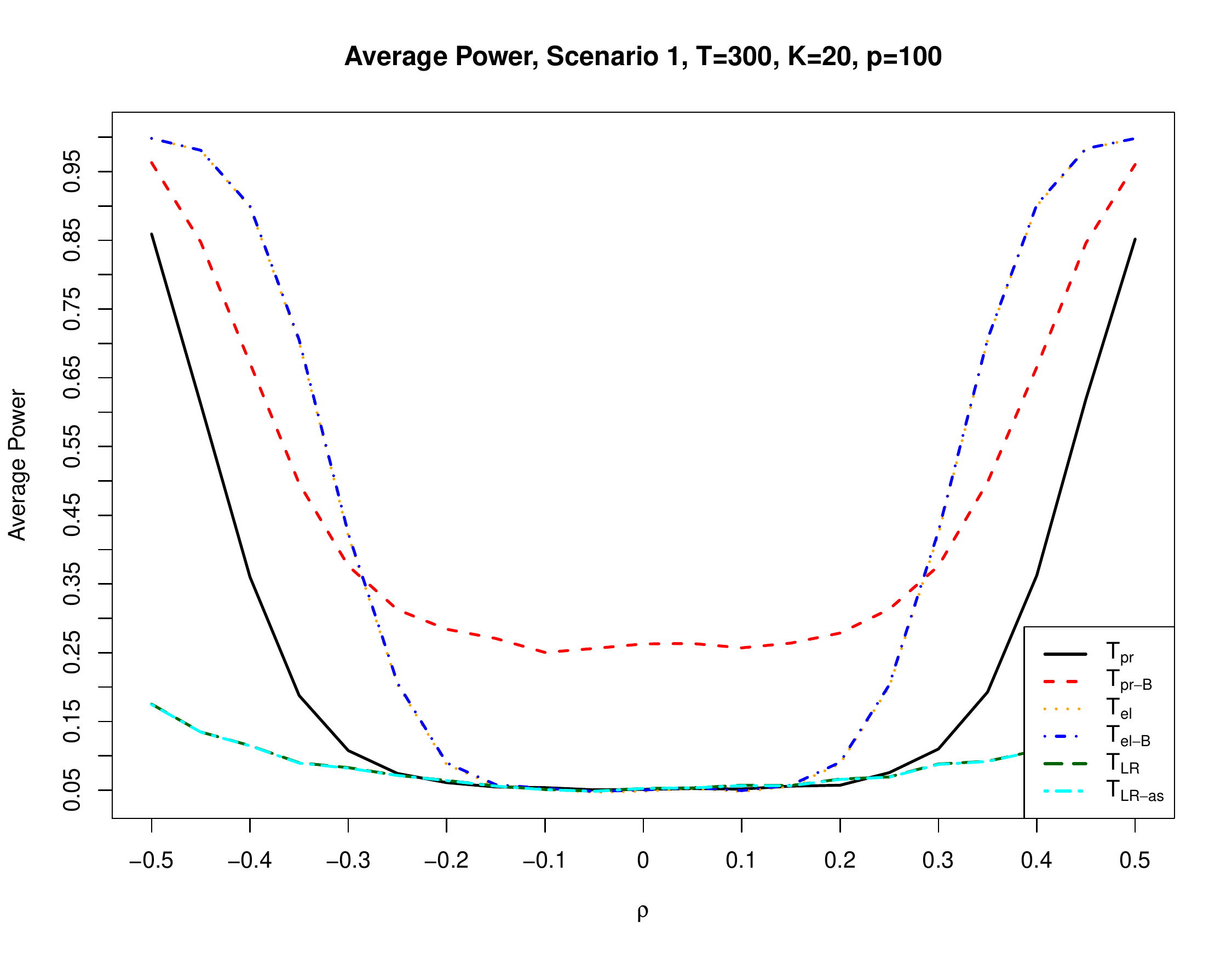}}&&\scalebox{0.4}{\includegraphics[]{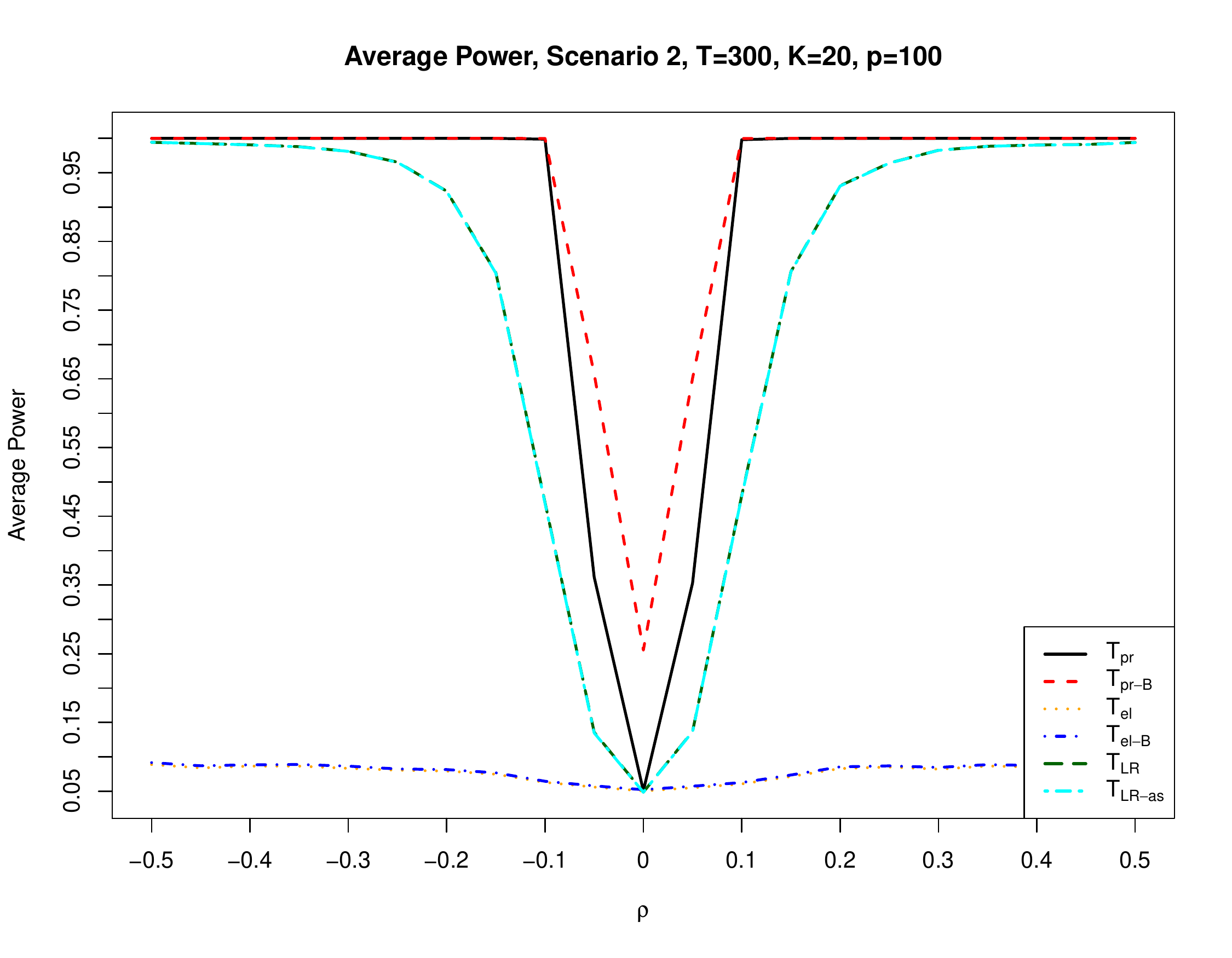}}\\
\scalebox{0.4}{\includegraphics[]{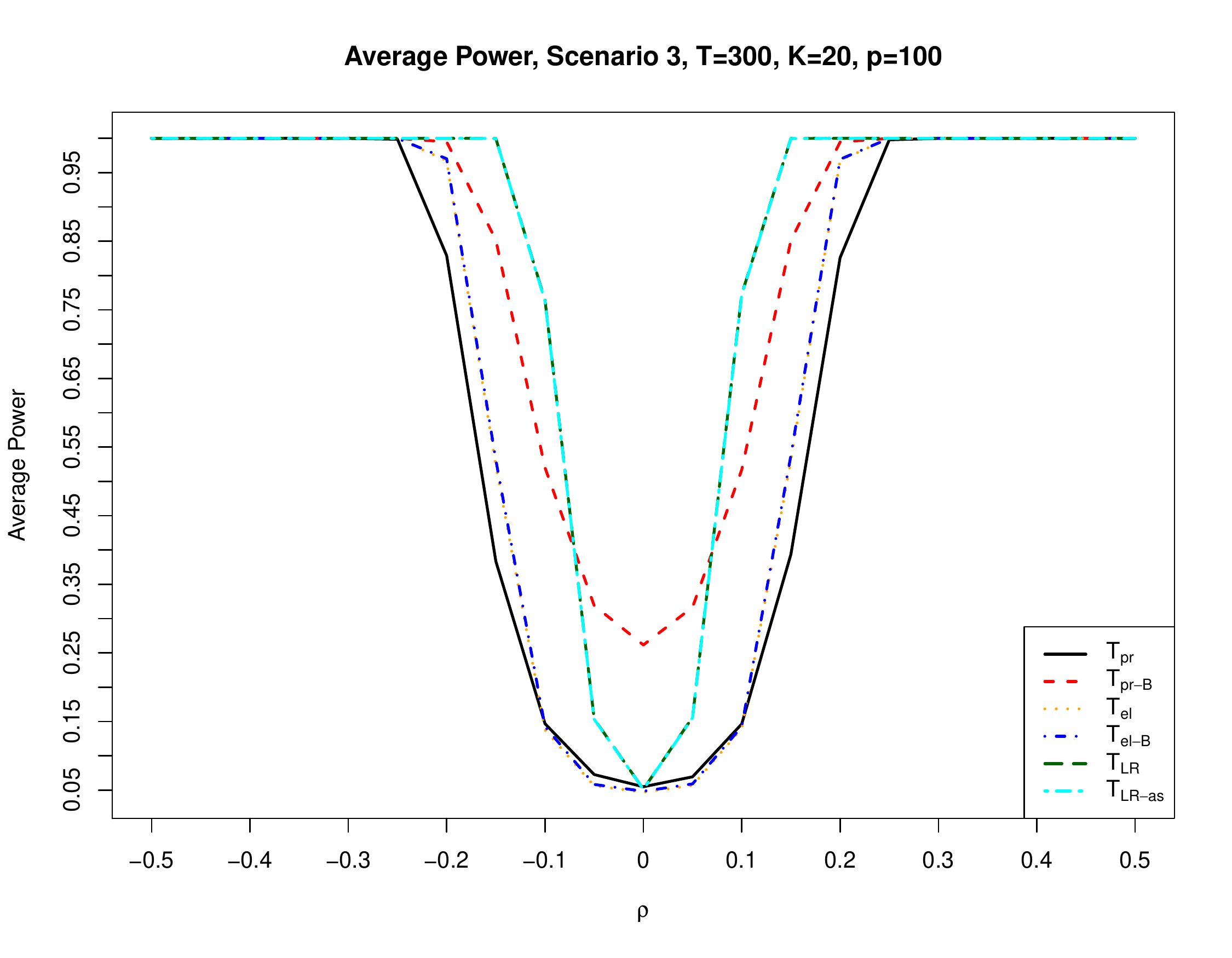}}&&\scalebox{0.4}{\includegraphics[]{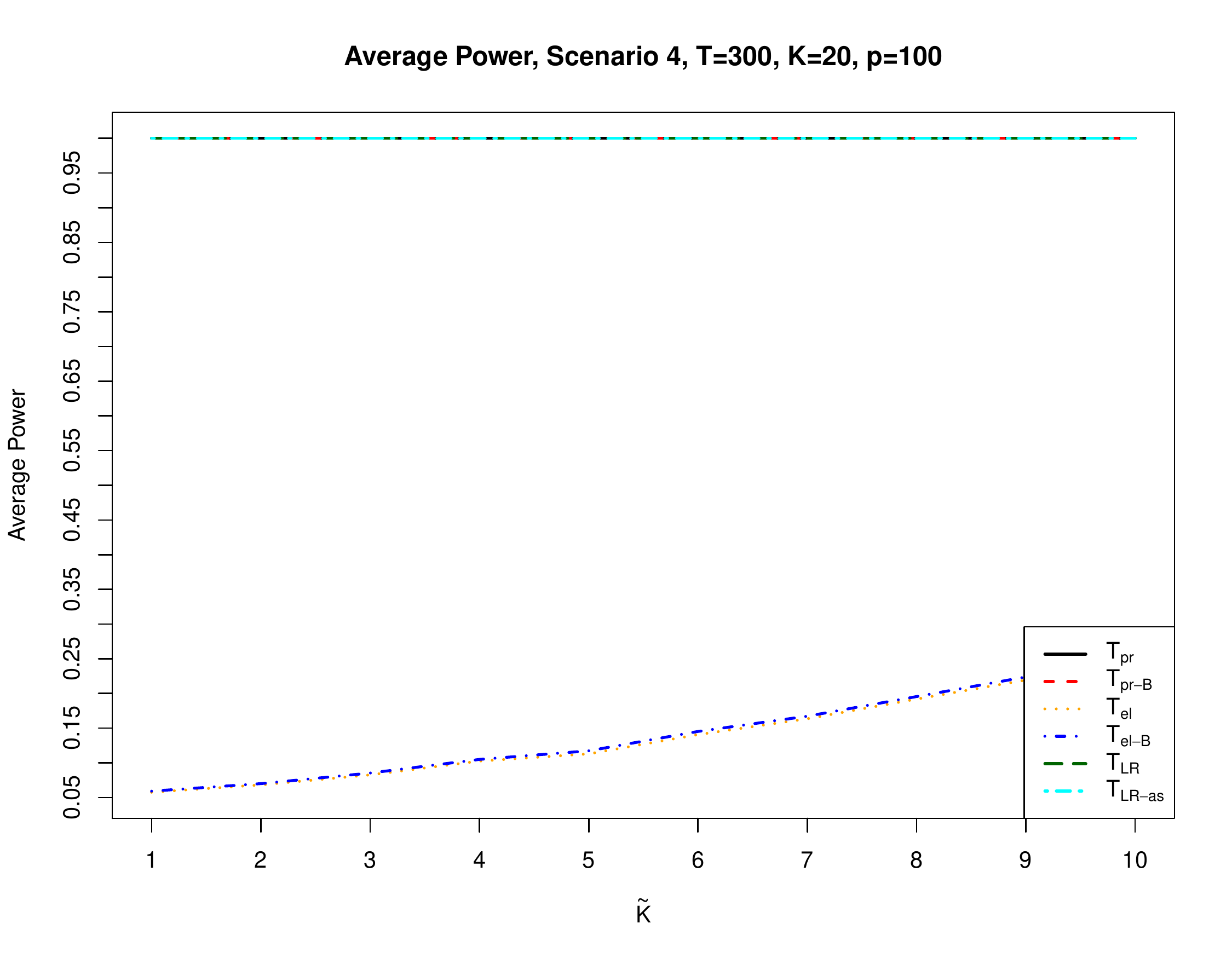}}\\
\end{tabular}
\end{center}
\caption{Power of $T_{el}$, $T_{el}$, and $T_{LR}$ based on the simulated critical values and of the corresponding tests whose critical values are determined by Bonferroni correction or asymptotic distribution ($T=300$, $K=20$, $p=100$). }
\label{Fig:T300K20p100}
\end{figure}
\end{landscape}

\newpage
\clearpage
\begin{landscape}
\begin{figure}[h!tb]
\begin{center}
\begin{tabular}{ccc}
\scalebox{0.4}{\includegraphics[]{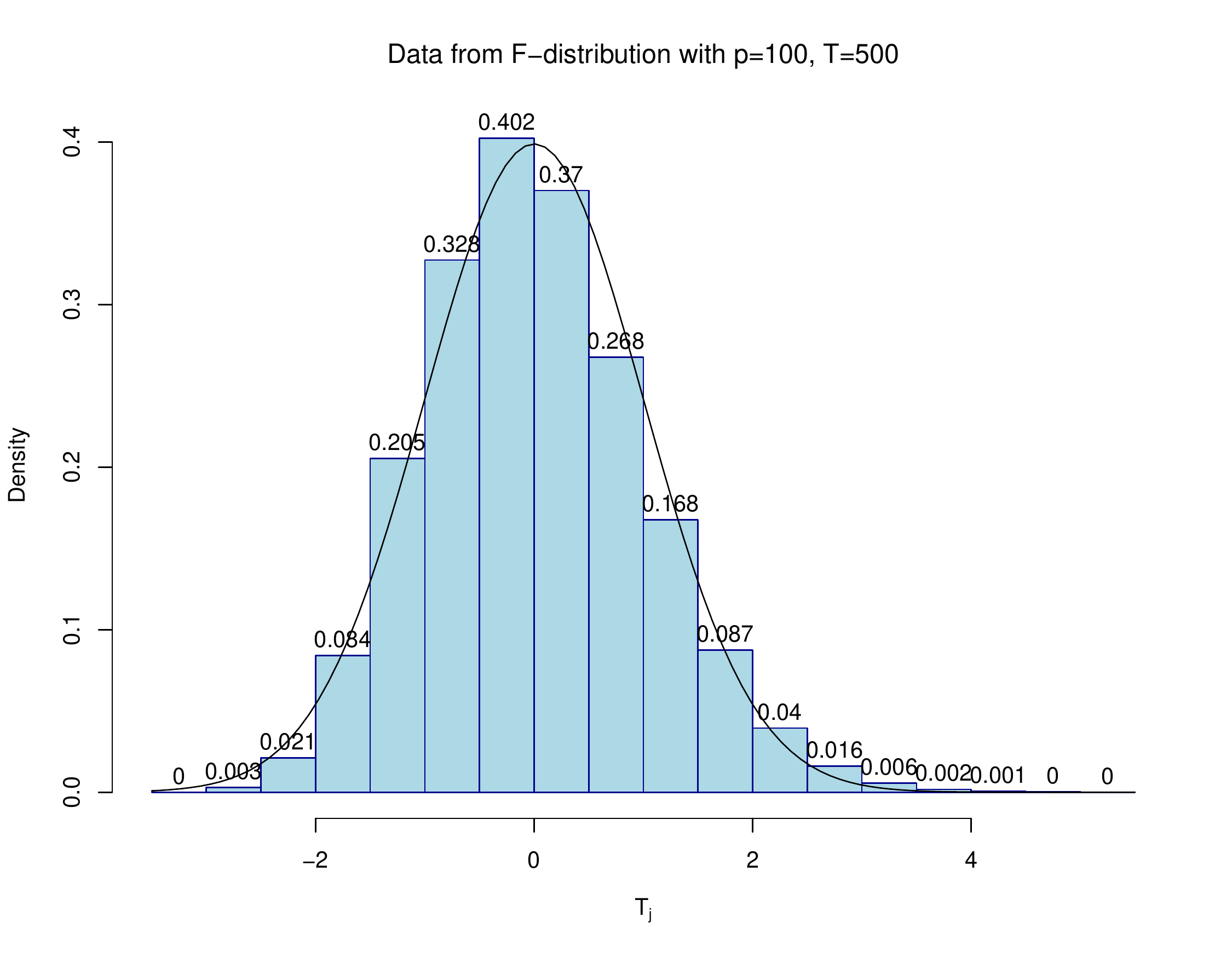}}&&\scalebox{0.4}{\includegraphics[]{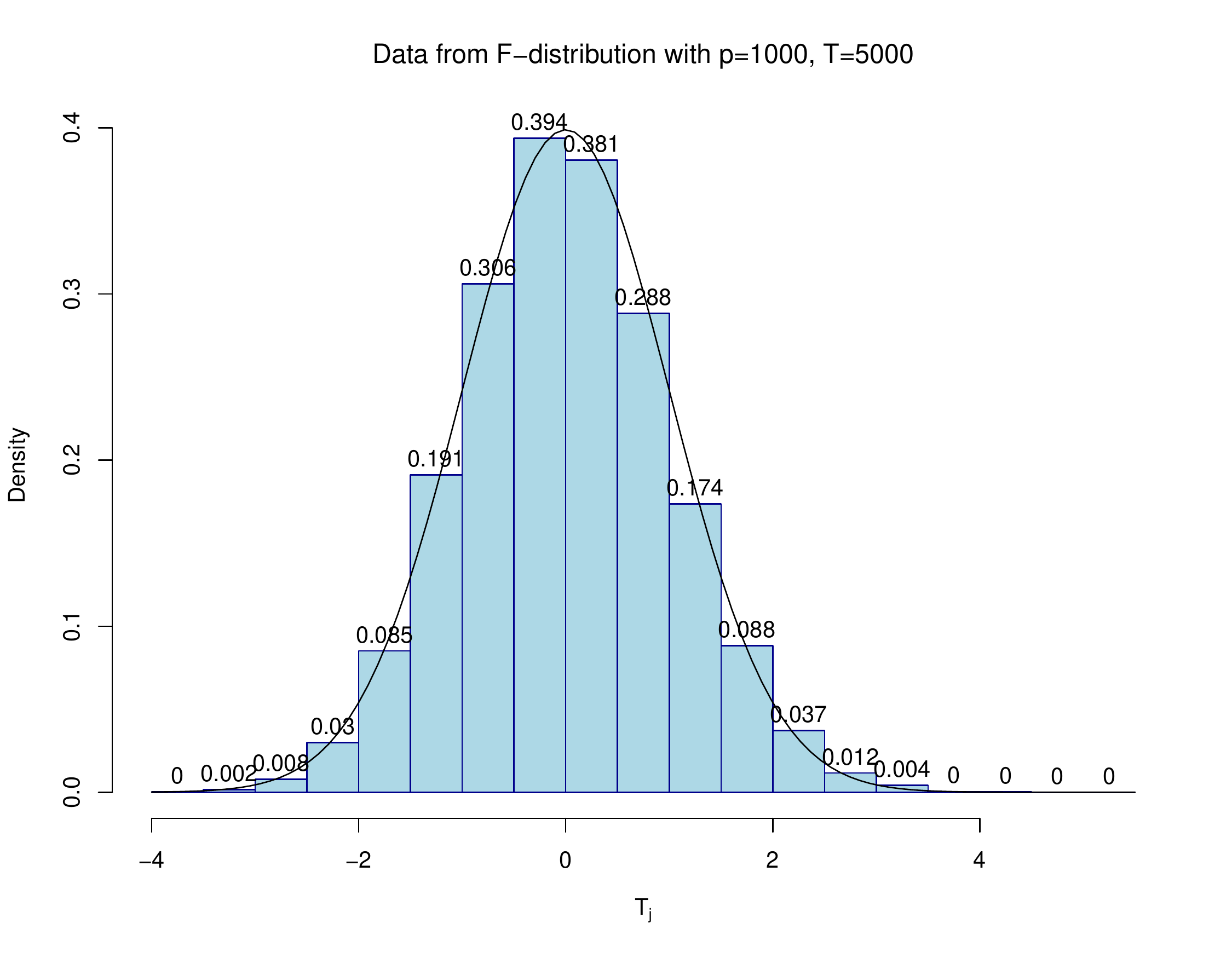}}\\
\scalebox{0.4}{\includegraphics[]{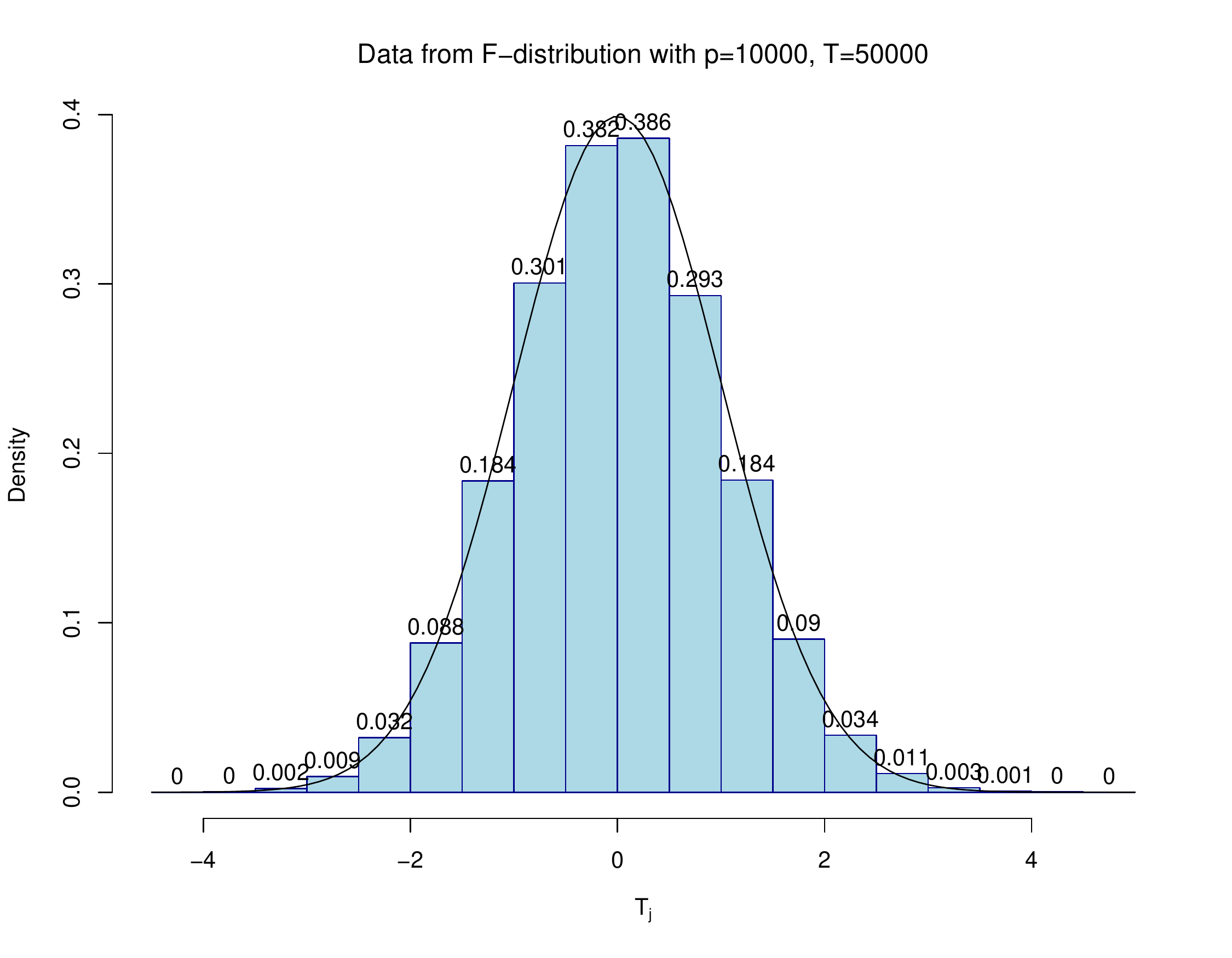}}&&\scalebox{0.4}{\includegraphics[]{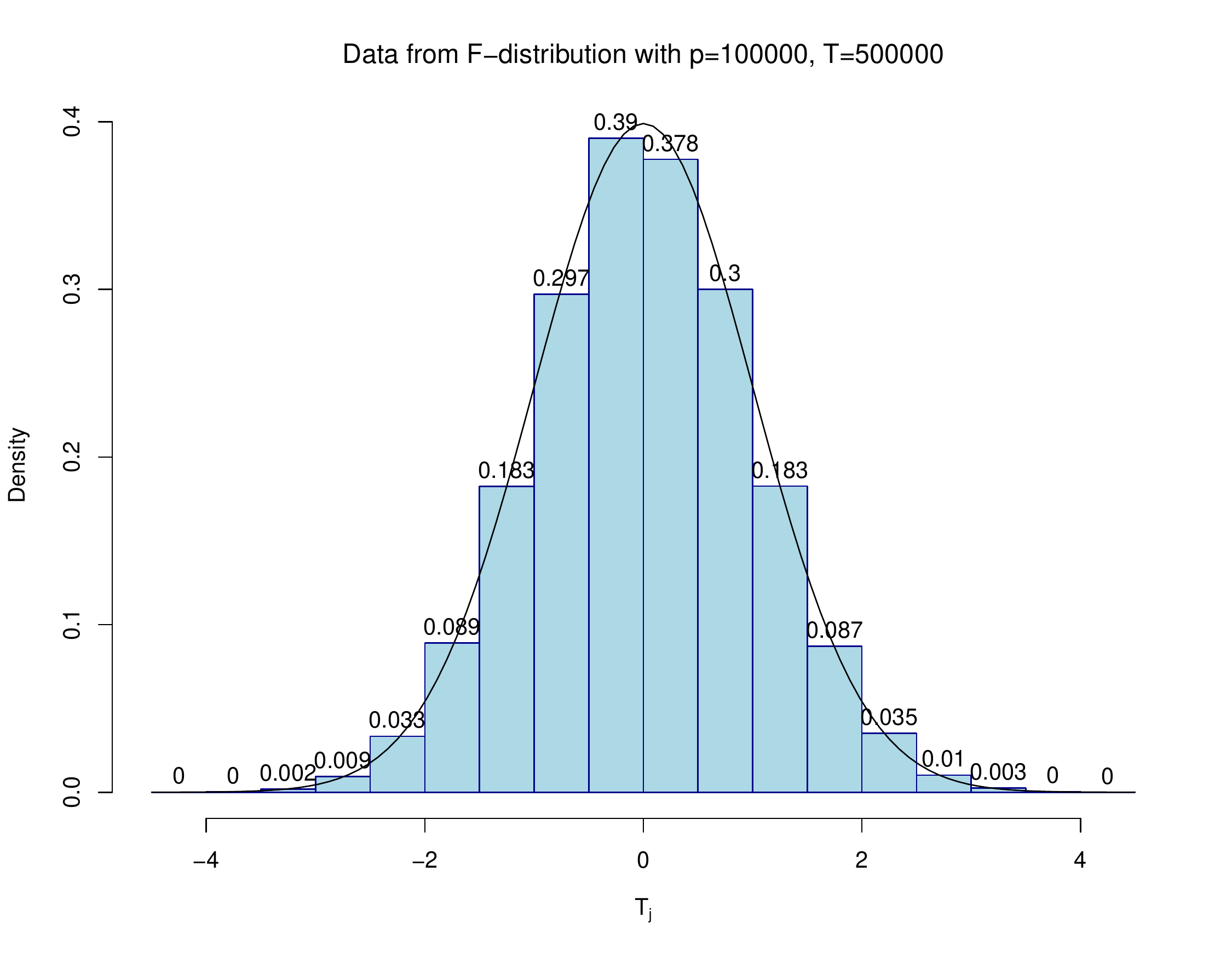}}\\
\end{tabular}
\end{center}
\caption{Approximation of the $\mathcal{F}_{d_1,d_2}$-distribution with large degrees of freedom by a normal distribution for $d_1 \in \{100,1000,10000,100000\}$ and $d_2=5d_1$. }
\label{Fig:Fdist}
\end{figure}
\end{landscape}


\newpage
\clearpage

\begin{figure}[h!tb]
\begin{center}
\begin{tabular}{ccc}
\scalebox{0.35}{\includegraphics[]{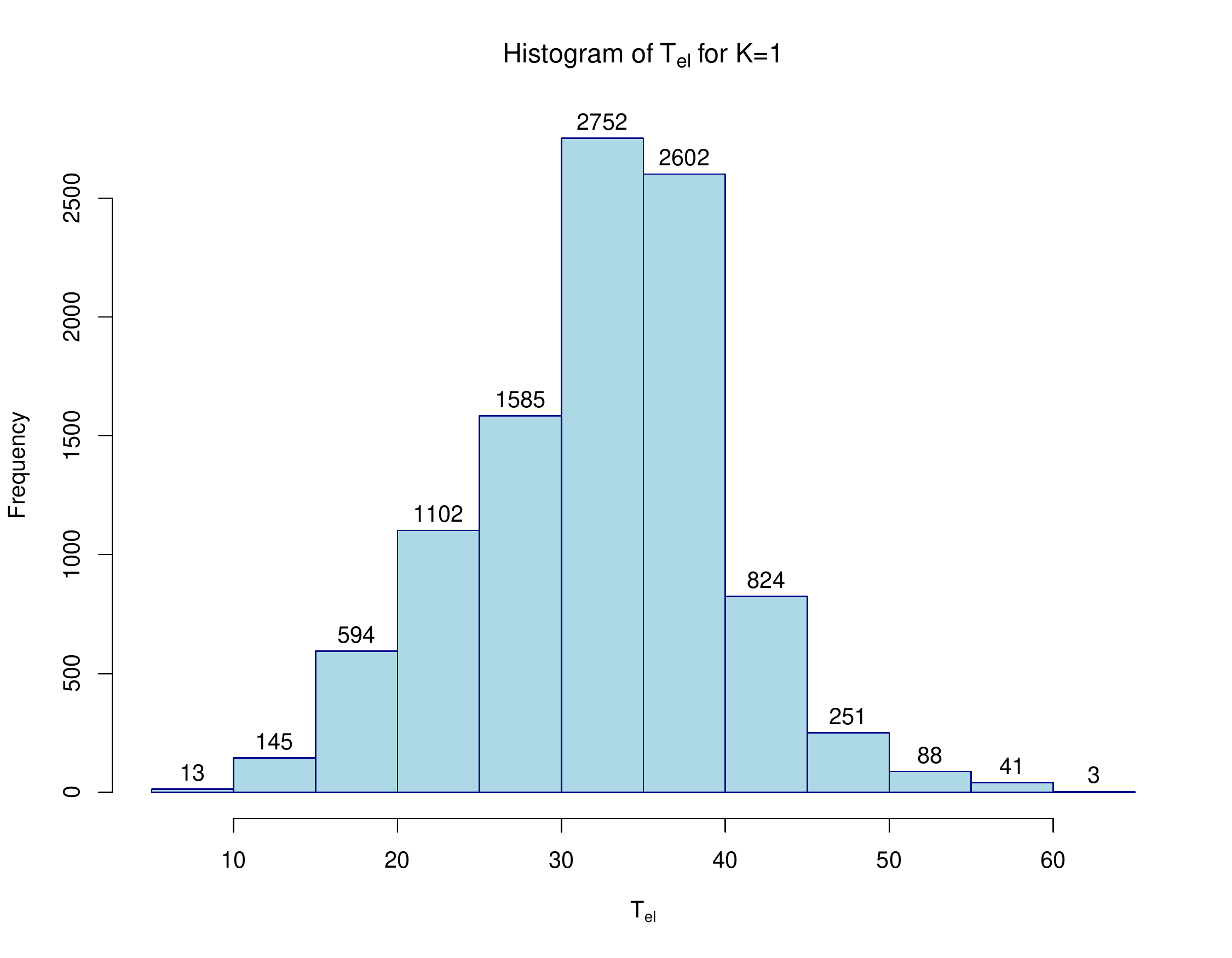}}&&\scalebox{0.35}{\includegraphics[]{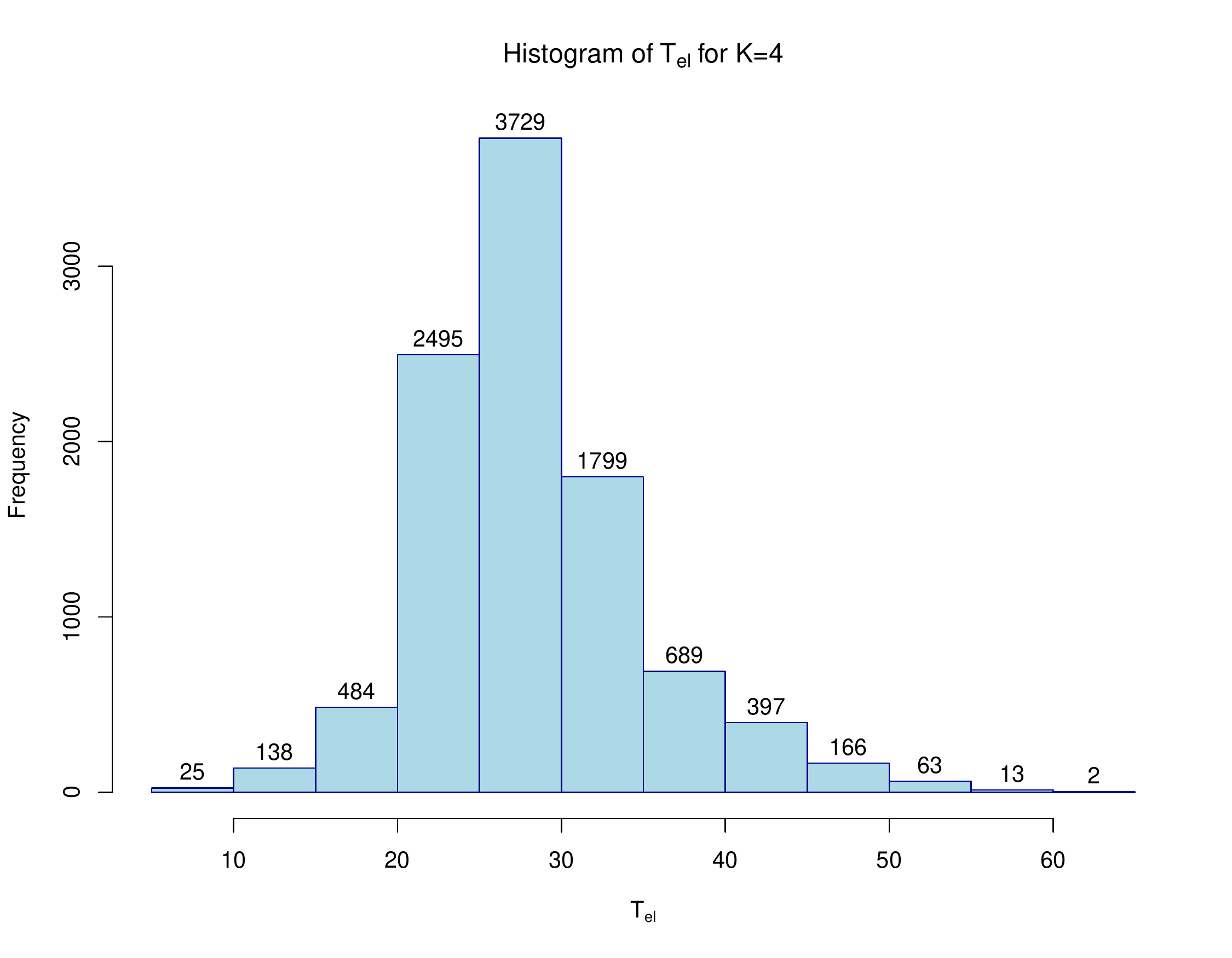}}\\
\scalebox{0.35}{\includegraphics[]{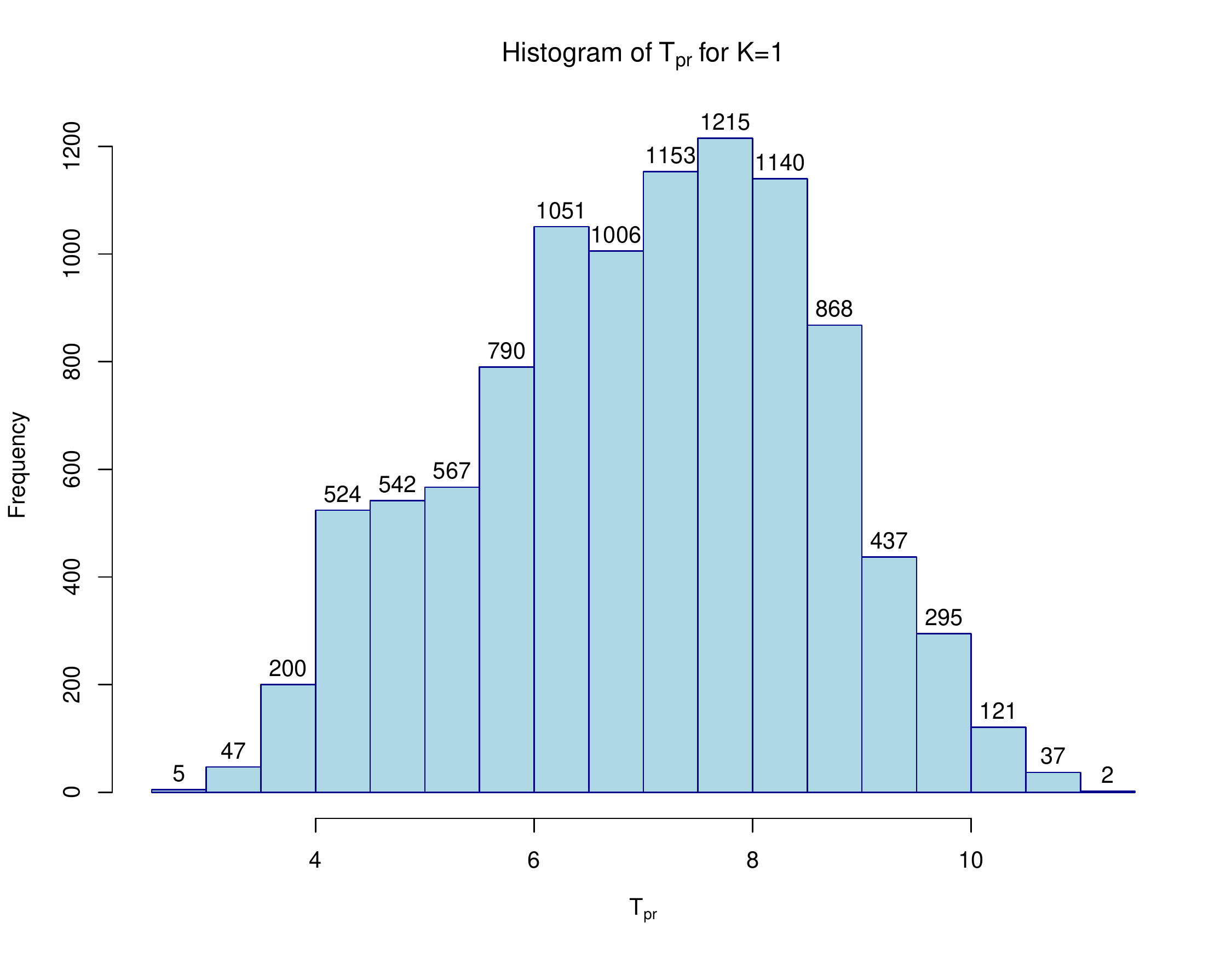}}&&\scalebox{0.35}{\includegraphics[]{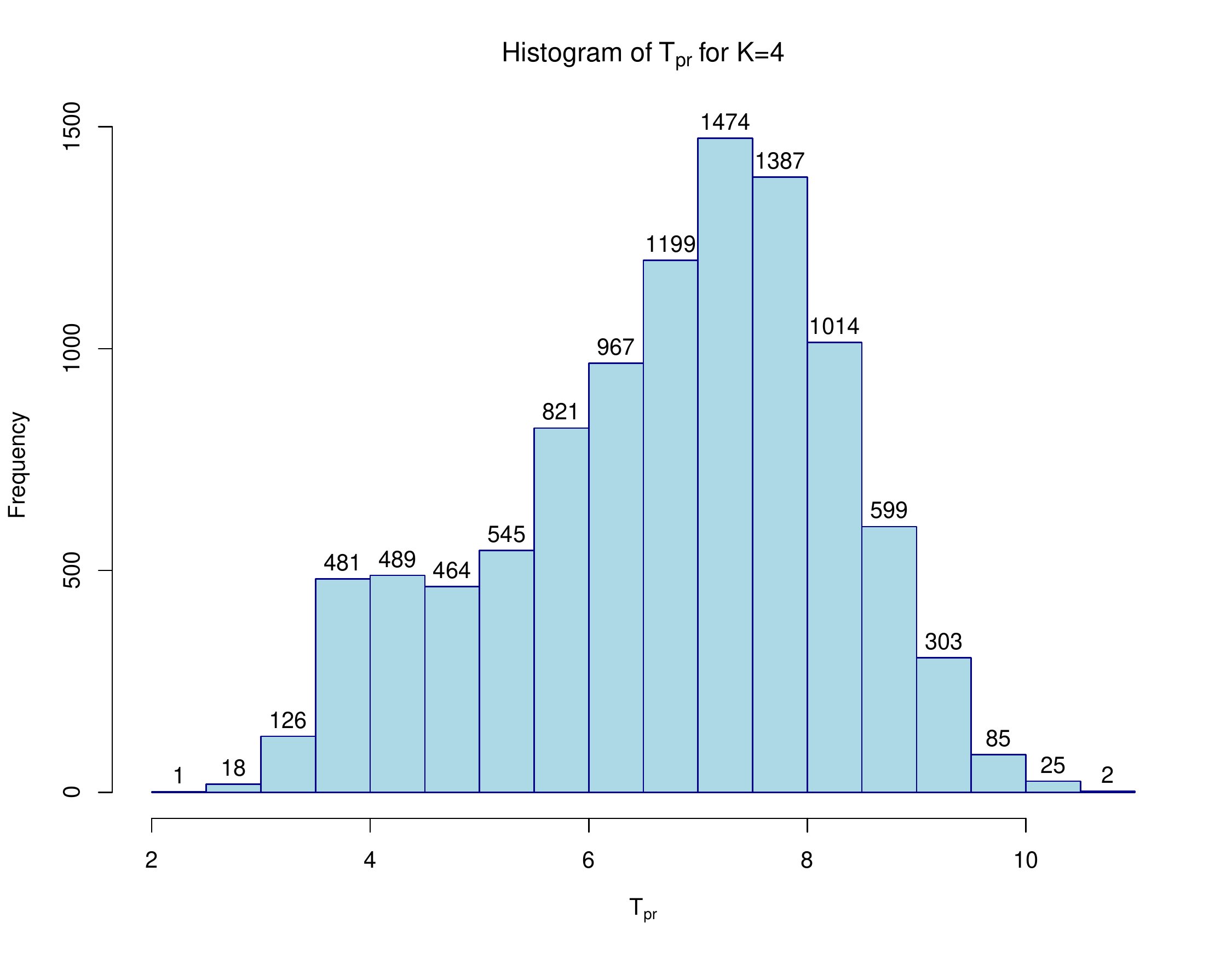}}\\
\scalebox{0.35}{\includegraphics[]{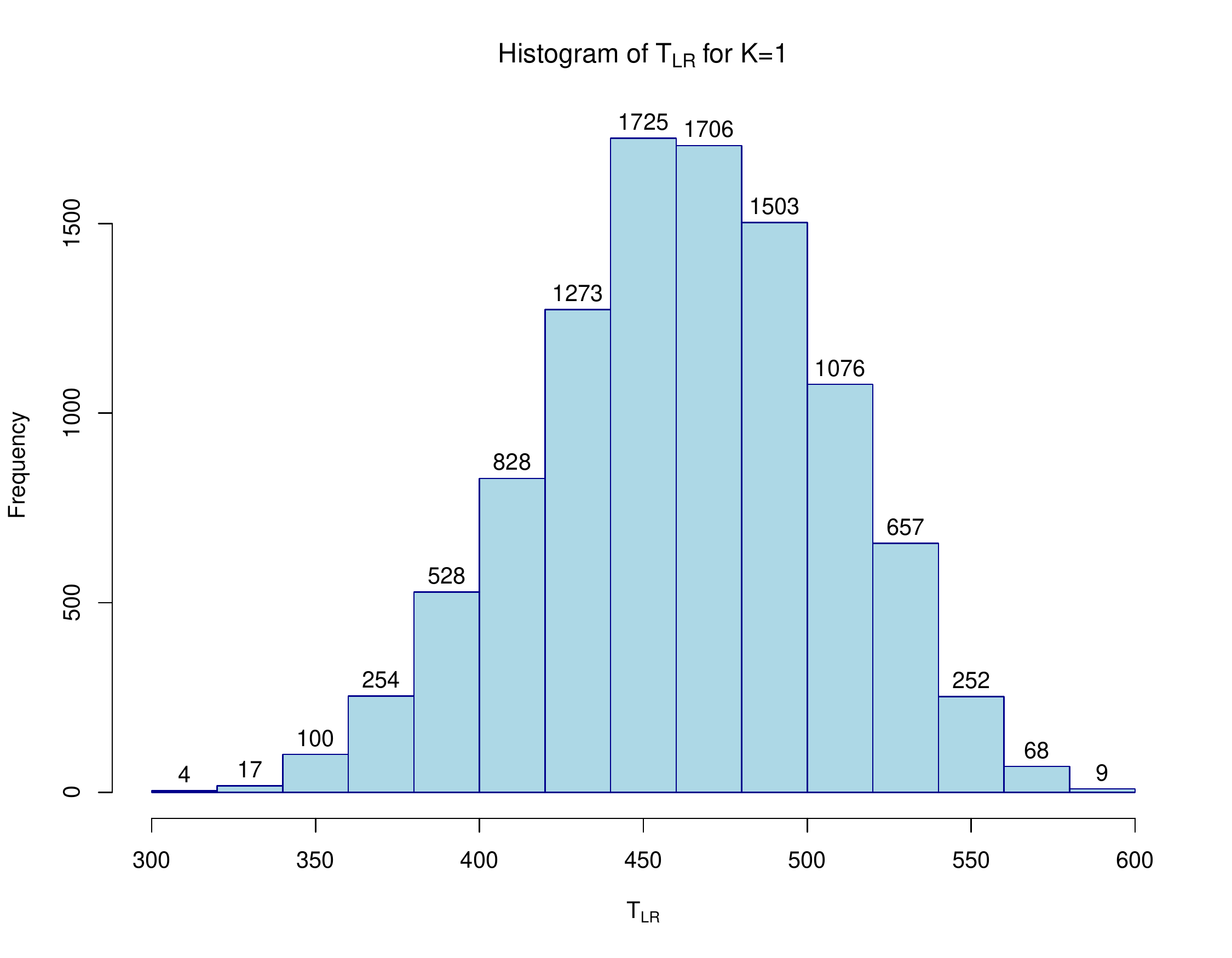}}&&\scalebox{0.35}{\includegraphics[]{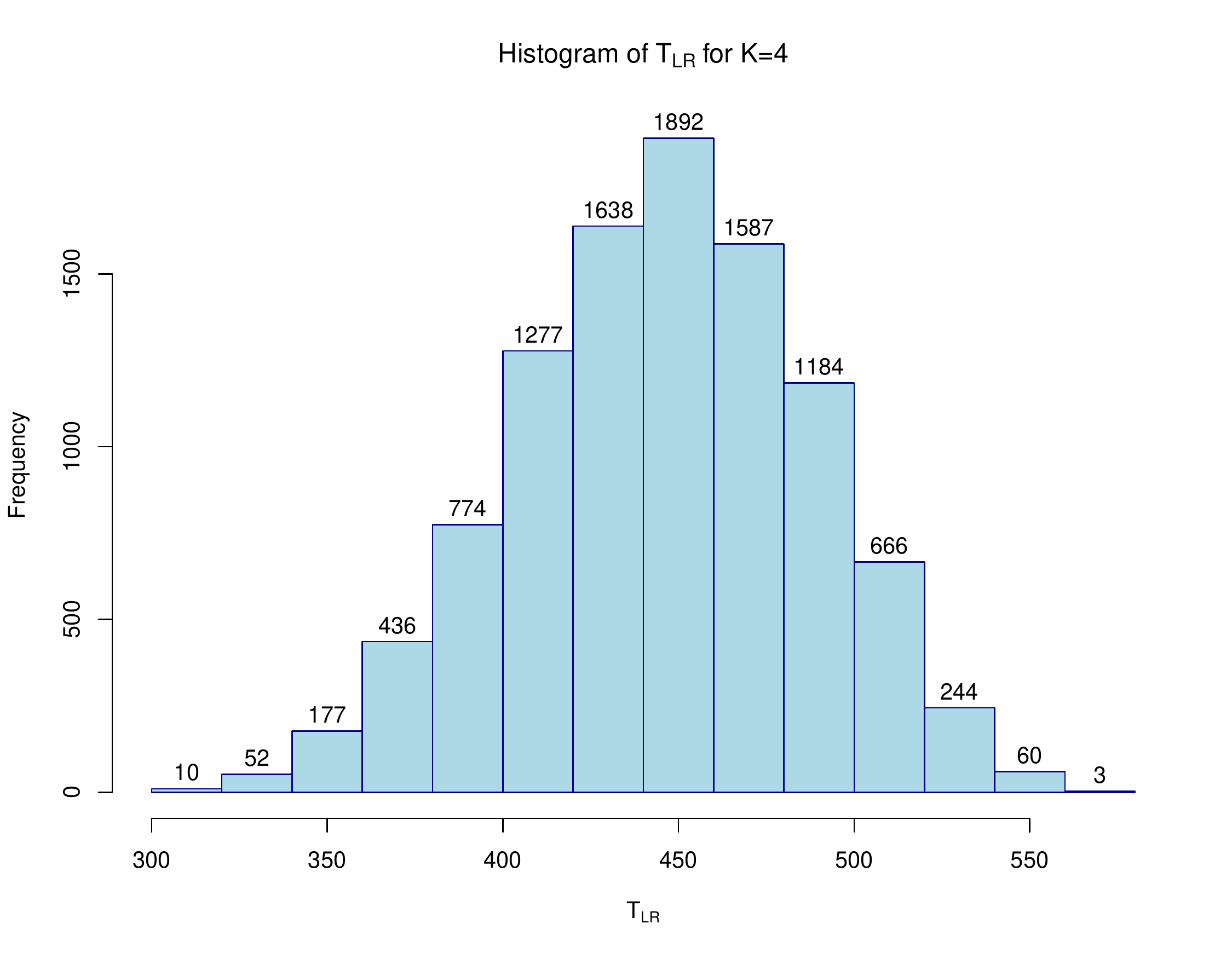}}\\
\end{tabular}
\end{center}
\caption{Histograms for the values of the test statistics $T_{el}$, $T_{pr}$, and $T_{LR}$ for portfolios of size $p=20$ constructed using the assets included into the DAX index. The data of weekly returns is used from the 11th of June 2012 to the 10th of June 2014 ($T=104$). The number of factors included into the model is equal to $K=1$ (left-hand side figures) and $K=4$ (right-hand side figures).}
\label{Fig:DAX}
\end{figure}

\begin{figure}[h!tb]
\begin{center}
\begin{tabular}{ccc}
\scalebox{0.35}{\includegraphics[]{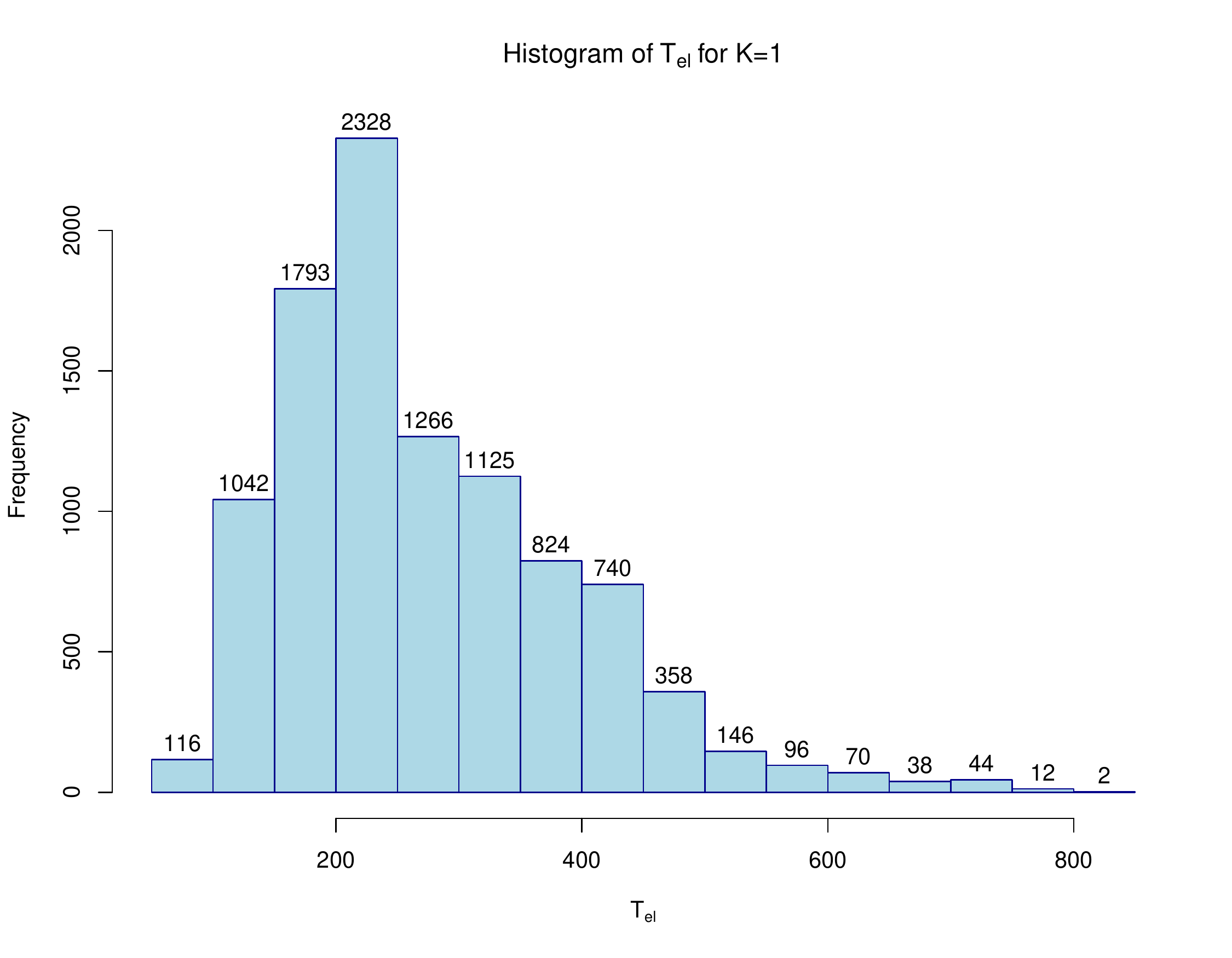}}&&\scalebox{0.35}{\includegraphics[]{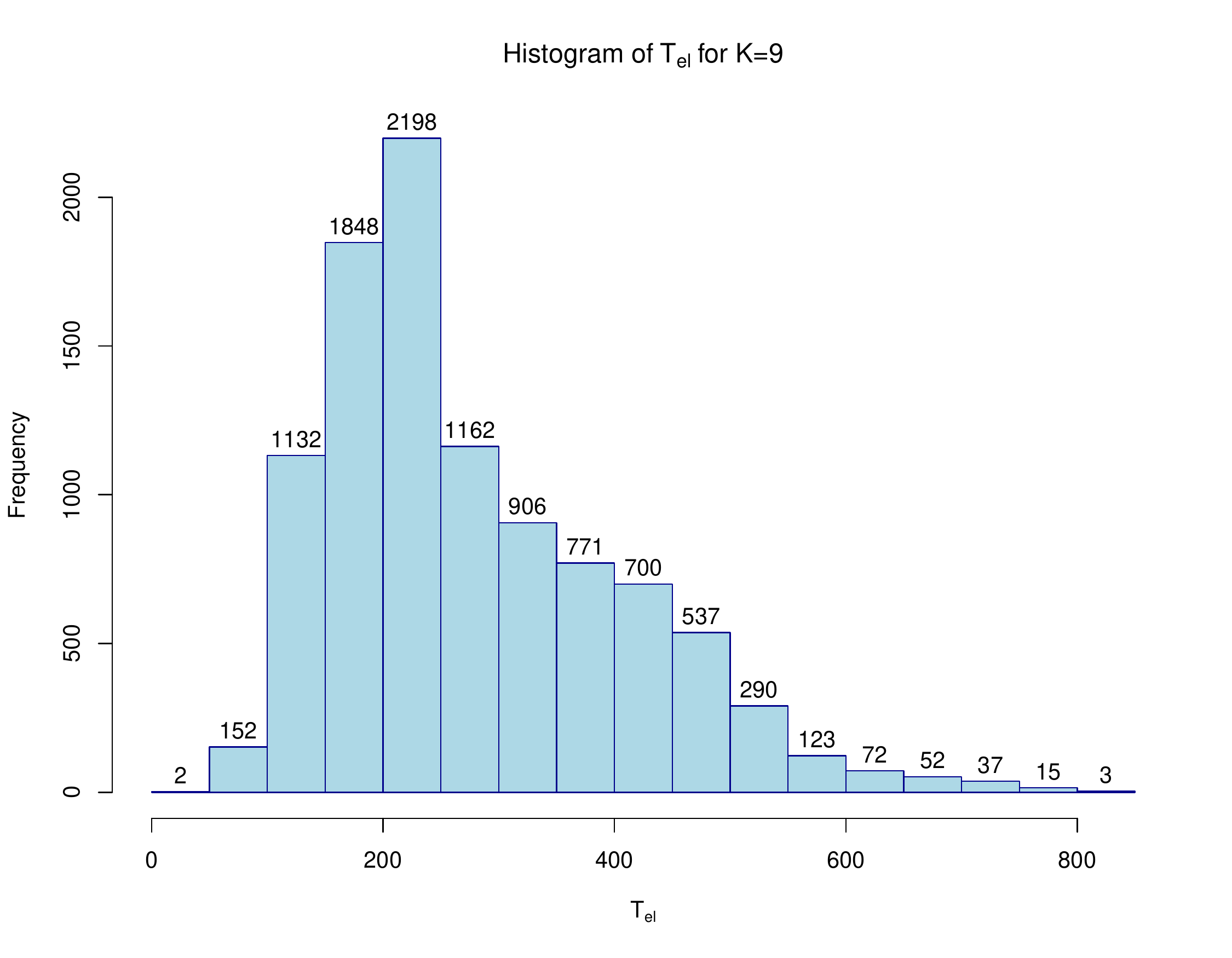}}\\
\scalebox{0.35}{\includegraphics[]{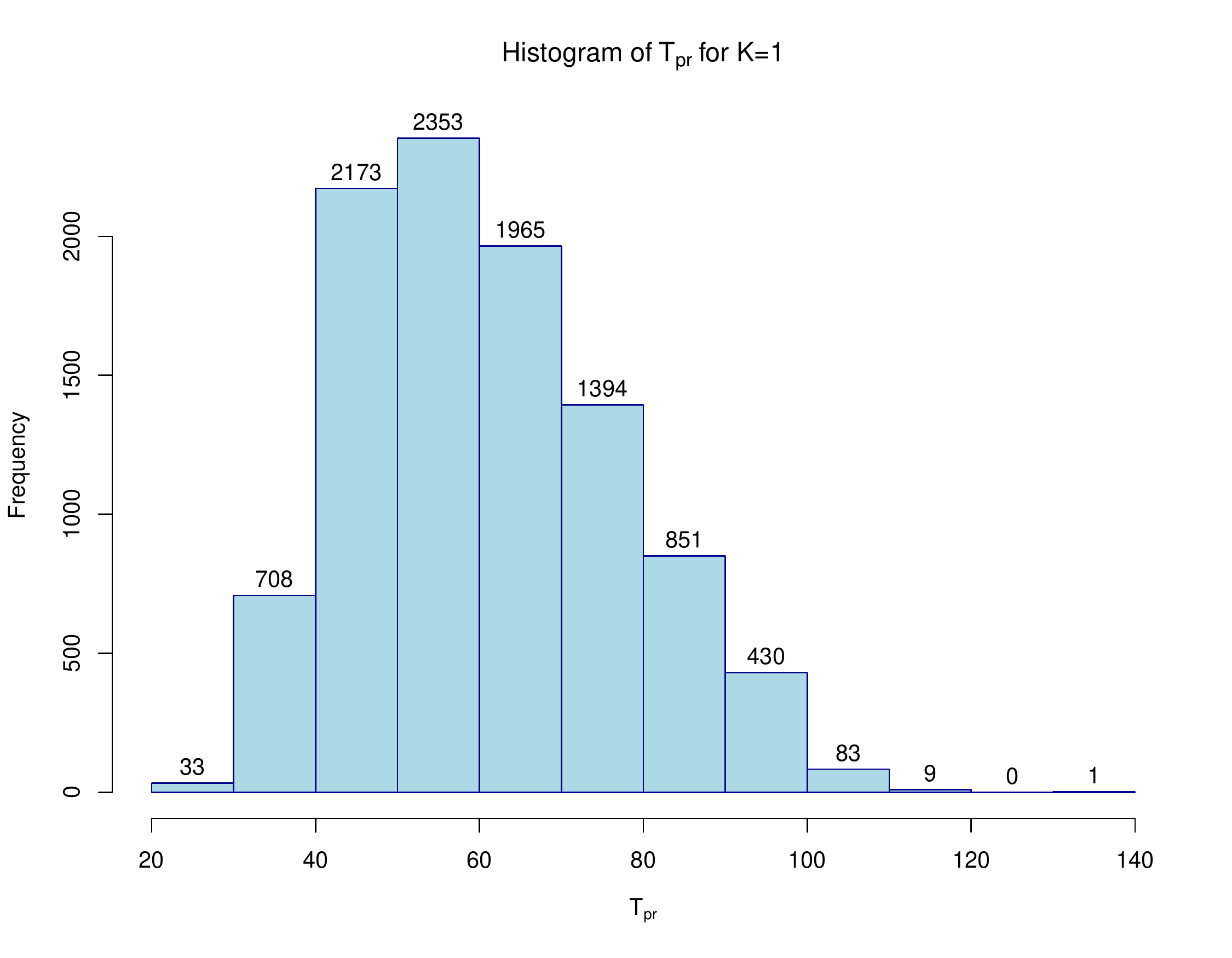}}&&\scalebox{0.35}{\includegraphics[]{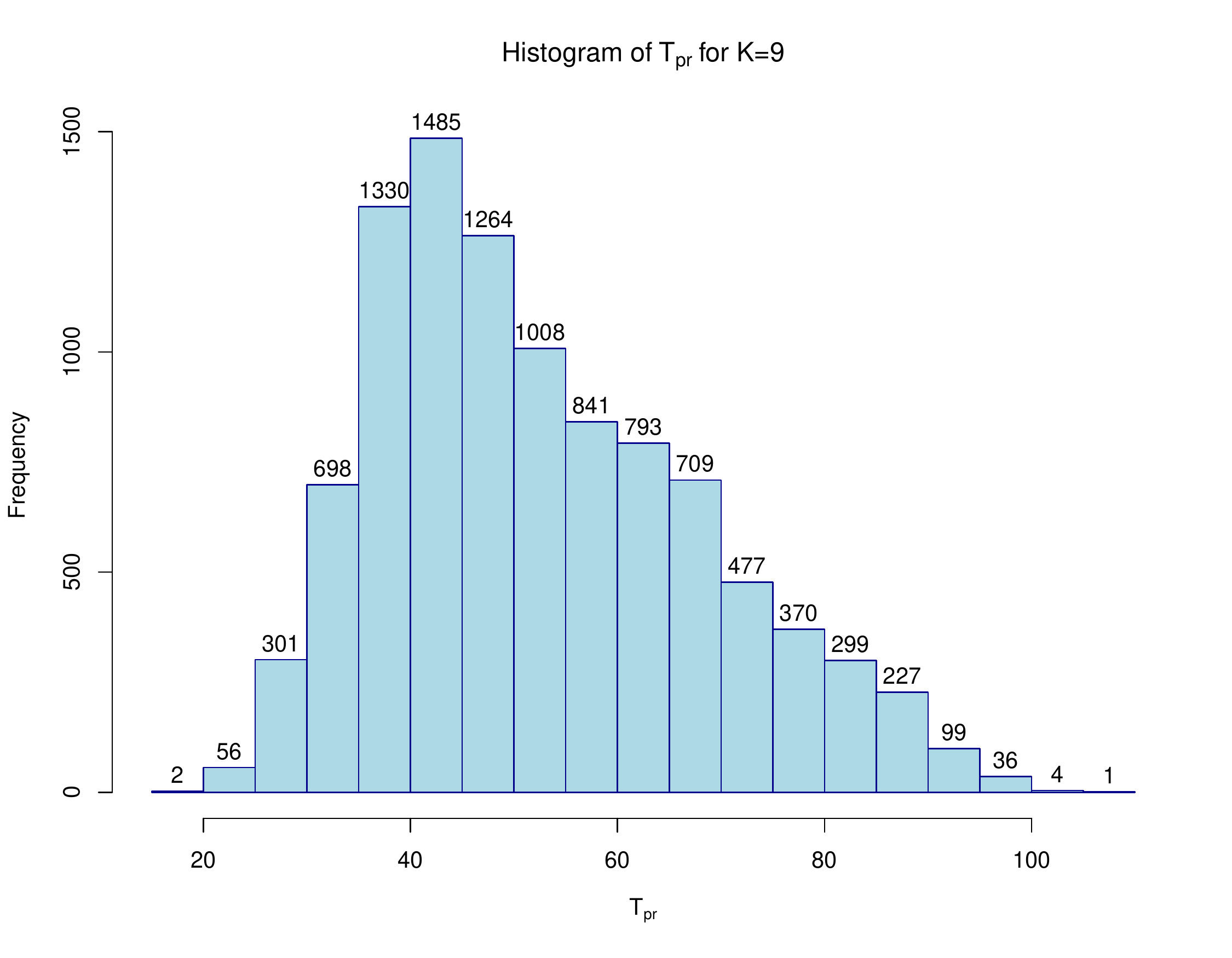}}\\
\scalebox{0.35}{\includegraphics[]{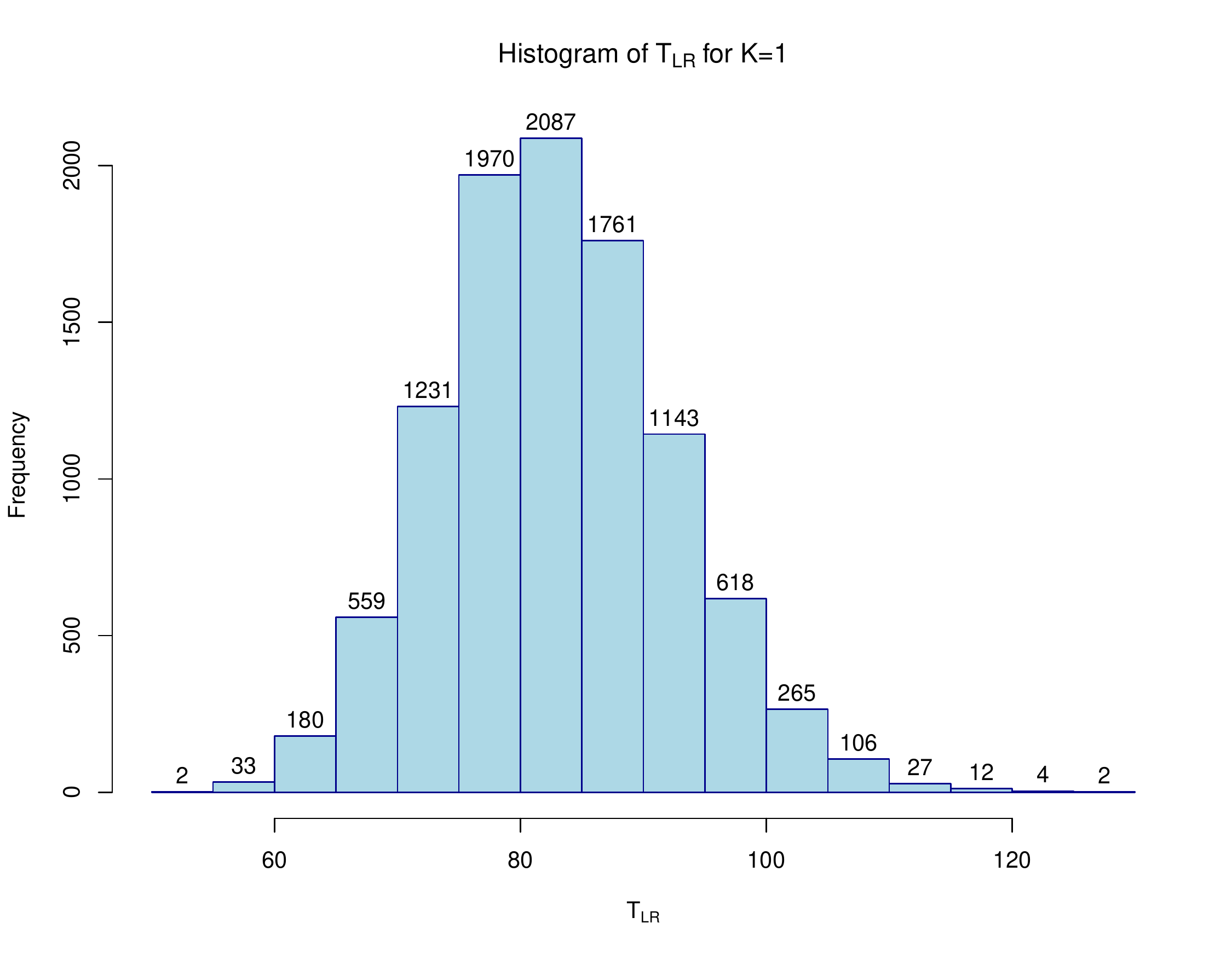}}&&\scalebox{0.35}{\includegraphics[]{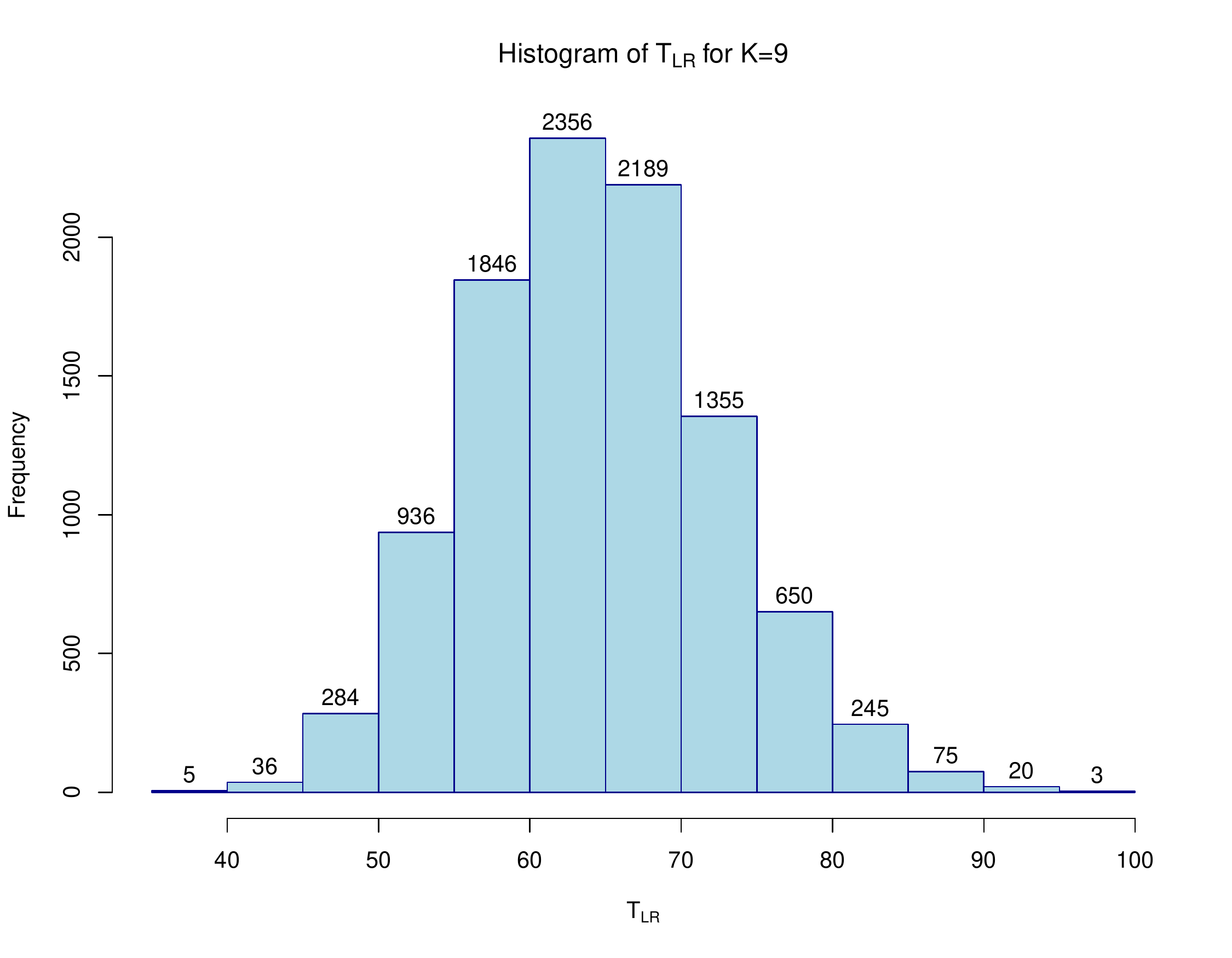}}\\
\end{tabular}
\end{center}
\caption{Histograms for the values of the test statistics $T_{el}$, $T_{pr}$, and $T_{LR}$ for portfolios of size $p=100$ constructed using the assets included into the SP index. The data of weekly returns is used from the 10th of June 2004 to the 10th of June 2014 ($T=518$). The number of factors included into the model is equal to $K=1$ (left-hand side figures) and $K=9$ (right-hand side figures).}
\label{Fig:SP}
\end{figure}

\end{document}